\documentclass[12pt, reqno]{amsart}
\usepackage{mathtools, amsthm, amscd, amsfonts, amssymb}
\usepackage{bm}
\usepackage{graphicx}
\usepackage[shortlabels]{enumitem}
\usepackage{mathrsfs}
\usepackage{xcolor}
\usepackage{cite}

\usepackage{lineno}
\usepackage[
colorlinks=true,
linkcolor=blue,
citecolor=blue,
urlcolor=blue,
pagebackref=true
]{hyperref}

\newtheorem{theorem}{Theorem}[section]
\newtheorem{lemma}[theorem]{Lemma}
\newtheorem{proposition}[theorem]{Proposition}
\newtheorem{corollary}[theorem]{Corollary}
\theoremstyle{definition}
\newtheorem{definition}[theorem]{Definition}

\theoremstyle{remark}

\numberwithin{equation}{section}

\numberwithin{equation}{section}

\newcommand{\mc}{\mathcal}
\newcommand{\mathcalH}{\mathcal{H}}

\DeclareMathOperator{\im}{range}

\newcommand{\gt}{\Gamma\otimes}

\newcommand{\ov}{\overline}
\newcommand{\sub}{\subseteq}

\newcommand{\wh}{\widehat}
\newcommand{\wt}{\widetilde}

\newcommand{\ttt}{T = [T_1, \cdots, T_n]}
\newcommand{\vvv}{V = [V_1, \cdots, V_n]}

\newcommand{\SA}{\sum_{\alpha \in \mathbb{F}_k^+}}

\DeclareMathOperator{\diag}{diag}

\begin{document}
	
	\title[$k$-Regular Factorizations and Joint Invariant Subspaces ]{$k$-Regular Factorizations and Joint Invariant Subspaces of Completely Non-Coisometric Row Contractions}
	
	%	Author information
	\author[Kalpesh J. Haria]{Kalpesh J. Haria}
	\address{School of Mathematics and Computer Science, Indian Institute of Technology Goa, Goa 403401, India}
	\email{kalpesh@iitgoa.ac.in, hikalpesh.haria@gmail.com}
	
	\author[Aashish Kumar Maurya]{Aashish Kumar Maurya}
	\address{School of Mathematics and Computer Science, Indian Institute of Technology Goa, Goa 403401, India}
	\email{aashish21232101@iitgoa.ac.in, a.k.maurya.math@gmail.com}

	% Subject classification and keywords - MODIFY THIS SECTION
	\subjclass[2020]{Primary 47A13, 47A15, 47A45, 47A68;  Secondary  47A20, 47A56}
\keywords{Invariant subspaces, characteristic functions, regular factorizations, functional models, row contractions, upper triangular block operator matrices}

	\begin{abstract}
		This article investigates $k$-regular factorizations of characteristic functions associated with completely non-coisometric row contractions. In this setting, a one-to-one correspondence is established between chains of joint invariant subspaces
		\[
		\mathcal{M}_1 \sub \cdots \sub \mathcal{M}_{k-1}
		\]
		and $k$-regular factorizations of the characteristic function of a completely non-coisometric row contraction. A functional model corresponding to a given $k$-regular factorization of a purely contractive multi-analytic operator satisfying the Szeg\H{o} condition is further constructed, and the associated chain of joint invariant subspaces is characterized in terms of the underlying multi-analytic factors. Finally, it is shown that any such chain of joint invariant subspaces induces a block upper-triangular decomposition of the underlying row contraction,  and that the characteristic function of each diagonal block coincides with the purely contractive part of the corresponding factor in the $k$-regular factorization.
	\end{abstract}
	
	\maketitle
	
	\tableofcontents
	\section{Introduction}
	
	%%%%%%%%%%%%%%%%%%%%%%%%%%%%%%%%%%%%%%%%%%%%%%%%%%%%%%
	Throughout this article, all Hilbert spaces are assumed to be separable complex Hilbert spaces.
	A row contraction on a Hilbert space \(\mathcal{H}\) is an \(n\)-tuple of operators 
	\(T = [T_1, \dots, T_n]: \bigoplus_{i=1}^n\mathcal{H} \to \mathcal{H}\) satisfying the inequality 
	\(\sum_{i=1}^n T_i T_i^* \leq I_{\mathcal{H}}\). A row contraction \(T\) is said to be 
	\emph{completely non-coisometric} (c.n.c.) if the subspace \(\mathcal{H}_c\), defined by
	\[
	\mathcal{H}_c \coloneqq \left\{ h \in \mathcal{H} : \sum_{|\alpha| = k} \|T_{\alpha}^* h\|^2 = \|h\|^2 \quad \text{for all } k \geq 1 \right\},
	\]
	reduces to the trivial subspace \(\{0\}\). A closed subspace $\mathcal M \sub \mathcal H$ is said to be 
	\emph{joint invariant} with respect to $\ttt$ if it is invariant under each $T_i$ for 
	$i=1,\dots,n.$
	
	For an integer $k \geq 2$, the concept of a $k$-regular factorization for the product of $k$ contractions was introduced in \cite{HM2026I}. This notion generalizes the classical regular factorization for the product of two contractions established by Sz.-Nagy and Foia\c{s} \cite{Sz64b,Sz64a,NF74}. By applying the theory of $k$-regular factorizations to the characteristic functions of completely non-unitary (c.n.u.) contractions, a one-to-one correspondence was established in \cite{HM2026I} between chains of invariant subspaces of a c.n.u.\ contraction $T$ and the $k$-regular factorizations of its characteristic function $\Theta_T$. Furthermore, a functional model for such contractions was constructed, together with functional model representations for the associated chains of invariant subspaces.
	
	The general theory of characteristic functions and functional models for row contractions was developed in the foundational works of A.~E.~Frazho \cite{Fr82a} and G.~Popescu \cite{Po95a,Po89b,Po89a,Po06}. For the corresponding theory in the setting of commutative row contractions, we refer to the works of T.~Bhattacharyya, J.~Eschmeier, and J.~Sarkar \cite{Bh05a,Bh06a}. In this multivariable setting, Popescu's characteristic function of a row contraction is a purely contractive multi-analytic operator. Notably, in \cite{Po06}, G.~Popescu established a one-to-one correspondence between joint invariant subspaces and the regular factorizations of the characteristic function of a c.n.c.\ row contraction, extending the classical theory of Sz.-Nagy and Foia\c{s} for c.n.u.\ contractions given in  \cite{Sz64b,Sz64a,NFBK10} to the multivariable setting.
	
	In the present article, the results of \cite{HM2026I} are extended to the multivariable setting by applying the concept of $k$-regular factorizations to contractive multi-analytic operators. Specifically,  a one-to-one correspondence is established between chains of joint invariant subspaces 
	\[
	\mathcal{M}_1 \sub \cdots \sub \mathcal{M}_{k-1}
	\]
	and $k$-regular factorizations of the characteristic function (i.e., $\Theta_T = \Theta_k \cdots \Theta_1$) of a c.n.c.\ row contraction. Furthermore, a functional model associated with a given $k$-regular factorization of a purely contractive multi-analytic operator satisfying the Szeg\H{o} condition (see \cite[Condition 3.2]{Po06}) is constructed, and obtain corresponding functional model representations for the associated chains of joint invariant subspaces of the c.n.c.\ row contraction. Finally, for a given $k$-regular factorization, the associated chain of joint invariant subspaces induces a natural upper triangular block decomposition of the c.n.c.\ row contraction. More importantly, we show that the characteristic function of each diagonal block coincides with the purely contractive part of the corresponding factor $\Theta_i$.	
	
	A contraction $A_1 \in B(\mathcal H,\mathcal F)$ is called a \emph{divisor} of a contraction $A \in B(\mathcal H,\mathcal K)$ if there exists a contraction $A_2 \in B(\mathcal F,\mathcal K)$ such that $A = A_2 A_1$, and if  the factorization $A = A_2 A_1$ is regular, then $A_1$ is called  \emph{regular divisor}. In Proposition 2.4 of Chapter VII in \cite{NF70}, Sz.-Nagy and Foia\c{s} proved that if \( \mathcal{M} \) and \( \mathcal{M}' \) are invariant subspaces of a c.n.u. contraction \( T \), corresponding to regular factorizations
	\(\Theta_T(z) = \Theta_2(z)\Theta_1(z)
	\text{ and } 
	\Theta_T(z) = \Theta_2'(z)\Theta_1'(z),\)
	respectively, then the inclusion $\mathcal M \sub \mathcal M'$ implies that $\Theta_{1}$ is a divisor of $\Theta'_1$. In a subsequent article \cite{Kerchy03}, L. K\'erchy refined this result by demonstrating that $\Theta_{1}$ is, in fact, a regular divisor of $\Theta'_1$.
	
	In the multivariable setting, it was shown in \cite{Po06} that if $\mathcal M$ and $\mathcal M'$ are joint invariant subspaces of a c.n.c. row contraction $T$, corresponding to the regular factorizations $\Theta_T=\Theta_2\Theta_1$ and $\Theta_T=\Theta_2'\Theta_1'$ respectively, then the inclusion $\mathcal M\sub \mathcal M'$ implies that $\Theta_{1}$ is a divisor of $\Theta'_1$. In Section 4 of the present article, we strengthen this multivariable result by proving that $\Theta_{1}$ is indeed a regular divisor of $\Theta'_1$, and we further establish the converse implication.

	%%%%%%%%%%%%%%%%%%%%%%%%%%%%%%%%%%%%%%%%%%%%%%%%%%%%%%
	
	\section{Preliminaries}
	
	We begin by establishing the setting of the full Fock space. Let \(\Gamma(\mathbb{C}^n)\), abbreviated as \(\Gamma\), denote the \emph{full Fock space} over \(\mathbb{C}^n\), defined as the direct sum \(\Gamma \coloneqq \bigoplus_{k \geq 0} (\mathbb{C}^n)^{\otimes k} = \mathbb{C} \oplus \mathbb{C}^n \oplus (\mathbb{C}^n \otimes \mathbb{C}^n) \oplus \cdots\). Let \(\mathbb{F}_n^+\) be the unital free semigroup generated by \(\{1, \cdots, n\}\) with identity \(\emptyset\). For a word \(\alpha = \alpha_1 \cdots \alpha_k \in \mathbb{F}_k^+\), we define the length \(|\alpha| = k\) (with \(|\emptyset| = 0\)) and the vector \(e_\alpha \coloneqq e_{\alpha_1} \otimes \cdots \otimes e_{\alpha_k}\), where \(e_\emptyset \coloneqq 1 \oplus 0 \oplus \cdots\) is the \emph{vacuum vector}. The set \(\{e_\alpha : \alpha \in \mathbb{F}_k^+\}\) forms an orthonormal basis for \(\Gamma\). For \( i = 1, \cdots, n \), the \emph{left} and \emph{right creation operators}, \( S_i, R_i : \Gamma \to \Gamma \), are defined respectively by \(S_i(x) \coloneqq e_i \otimes x\) and \(R_i(x) \coloneqq x \otimes e_i\) for all \(x \in \Gamma\).
	
	\begin{definition}
		Let \(\mathcal{H}\) be a Hilbert space. An \(n\)-tuple of operators \(T \coloneqq [T_1, \cdots, T_n]: \bigoplus_{i=1}^n\mathcal{H}\to \mathcal H\) is called a \emph{row contraction} if \(\sum_{i=1}^n T_i T_i^* \leq I_{\mathcal{H}}\). For \(\alpha = \alpha_1 \cdots \alpha_k \in \mathbb{F}_n^+\), we denote \(T_\alpha \coloneqq T_{\alpha_1} \cdots T_{\alpha_k}\), with \(T_\emptyset \coloneqq I_{\mathcal{H}}\). The associated \emph{defect operators} are defined as
		\[
		D_T \coloneqq (I - T^*T)^{1/2} \in B\left(\bigoplus_{i=1}^n \mathcal{H} \right) \quad \text{and} \quad D_{T^*} \coloneqq (I - TT^*)^{1/2} \in B(\mathcal{H}),
		\]
		with corresponding \emph{defect spaces} \(\mathcal{D}_T \coloneqq \overline{\im(D_T)}\) and \(\mathcal{D}_{T^*} \coloneqq \overline{\im(D_{T^*})}\).
		A row contraction \(T\) is said to be \emph{pure} if \(\lim_{k \to \infty} \sum_{|\alpha| = k} \|T_{\alpha}^* h\|^2 = 0\) for all \(h \in \mathcal{H}\).
	\end{definition}

	Central to this theory is the concept of multi-analytic operators. Let \(\mathcal{H}\) and \(\mathcal{K}\) be Hilbert spaces. A bounded linear map \(M : \Gamma \otimes \mathcal{H} \to \Gamma \otimes \mathcal{K}\) is called a \emph{multi-analytic operator} if it intertwines with the left creation operators, i.e., \(M(S_i \otimes I_{\mathcal{H}}) = (S_i \otimes I_{\mathcal{K}})M\) for \(i=1, \cdots, n\). Such an operator is uniquely determined by its \emph{symbol} \(\theta: \mathcal{H} \to \Gamma \otimes \mathcal{K}\), defined by \(\theta(h) \coloneqq M(e_\emptyset \otimes h)\). The operator with symbol $\theta$ is denoted by $M_\theta$. Two multi-analytic operators \(M\) and \(N\) \emph{coincide} if there exist unitary operators \(U, V\) such that \(N(I \otimes U) = (I \otimes V)M\).
	The operator \(M_\theta\) is classified as: \emph{inner} if it is an isometry; \emph{outer} if \(\overline{M_\theta(\Gamma \otimes \mathcal{H})} = \Gamma \otimes \mathcal{K}\); \emph{unitary constant} if \(M_\theta = I_\Gamma \otimes W\) for a unitary \(W\); and \emph{purely contractive} if \(\| P_{\mathcal{K}} \theta h \| < \| h \|\) for all nonzero \(h \in \mathcal{H}\), where \(P_{\mathcal{K}}\) projects onto \(e_\emptyset \otimes \mathcal{K}\).
	
	Following G. Popescu \cite{Po89b}, the \emph{characteristic function} of a row contraction \(T\) is the multi-analytic operator \(\Theta_{T} : \Gamma \otimes \mathcal{D}_{T} \to \Gamma \otimes \mathcal{D}_{T^*}\) with symbol \(\theta_{T}: \mathcal{D}_{T} \to \Gamma \otimes \mathcal{D}_{T^*}\) defined by:
	\[
	\theta_{T}(h) \coloneqq -\sum_{i=1}^n T_i P_i h + \sum_{i=1}^n (S_i \otimes I_{\mathcal{D}_{T^*}}) \left( \sum_{\alpha \in \mathbb{F}_n^+} e_{\alpha} \otimes D_{T^*} T^*_{\alpha} P_i D_{T} h \right), \quad h \in \mathcal{D}_{T},
	\]
	where \(P_i\) is the orthogonal projection of \(\bigoplus_{j=1}^{k}\mathcal{H}\) onto its \(i^{\text{th}}\) component.

	Let $\ttt $ be a row contraction acting on a Hilbert space $\mathcalH$. A tuple $\vvv $ of operators on a Hilbert space $\mathcal{K} \supseteq \mathcalH$ is called a \emph{minimal isometric dilation} of $\ttt$ if the following conditions are satisfied
	\begin{enumerate}[\rm (i)]
		\item The operators $V_1, \dots, V_n \in B(\mathcal{K})$ are isometries satisfying
		\[
		V_i^* V_j = \delta_{ij} I_{\mathcal{K}}, \quad \text{for all } 1 \leq i,j \leq n;
		\]
		
		\item For each $i \in \{1,\dots,n\}$, the compression satisfies
		\[
		T_i^* = V_i^*\vert_\mathcal{H};
		\]
		
		\item The space $\mathcal{K}$ is minimal in the sense that
		\[
		\mathcal{K} = \bigvee_{\alpha \in \mathbb F_n^+} V_\alpha \mathcalH.
		\]
	\end{enumerate}
	
	We define the defect spaces associated with this construction as
	\begin{equation}
		\mathcal{L} \coloneqq \bigvee_{i=1}^n (V_i - T_i)\mathcalH \quad \text{and} \quad \mathcal{L}_* \coloneqq \ov{\left(I_{\mathcal{K}} - \sum_{i=1}^n V_i T_i^*\right)\mathcalH}.
	\end{equation}
	For a wandering subspace $\mathcal E$ with respect to $V$, we define
	\[
	M_V(\mathcal{E}) \coloneqq \bigoplus_{\alpha \in \mathbb F_n^+} V_\alpha \mathcal{E}.
	\]
	As shown in \cite{Po89a}, the minimal isometric dilation  space $\mathcal{K}$ admits the orthogonal decompositions
	\begin{equation}\label{orthogonal_decomposition_K}
		\mathcal{K} = \mathcal R \oplus M_V(\mathcal{L}_*) 
		= \mathcalH \oplus M_V(\mathcal{L}),
	\end{equation}
	where
	\[
	\mathcal R = \bigcap_{m=0}^{\infty} \bigoplus_{|\alpha|=m} V_\alpha \mathcal{K}.
	\]
	If the row contraction is c.n.c., then we have
	\begin{equation}\label{c.n.c. conditions}
		\ov{M_V(\mathcal{L})\vee M_V(\mathcal{L}_*)}=\mathcal K.
	\end{equation}	
	Furthermore, let $\Phi^{\mathcal{L}} : M_V(\mathcal{L}) \to \Gamma \otimes \mathcal{L}$ be the canonical unitary representation defined by the mapping
	\begin{equation}
		\Phi^{\mathcal{L}} \left( \SA V_\alpha \ell_\alpha \right) \coloneq \SA e_\alpha \otimes \ell_\alpha,
	\end{equation}
	for $\{\ell_\alpha\}_{\alpha \in \mathbb F_n^+} \sub \mathcal{L}$ such that $\SA \|\ell_\alpha\|^2 < \infty$. This unitary operator serves as a Fourier representation, intertwining the dilation $V$ with the standard multi-shift $S = [S_1, \dots, S_n]$ via the identity
	\begin{equation}
		\Phi^{\mathcal{L}} V_i = (S_i \otimes I_{\mathcal{L}}) \Phi^{\mathcal{L}}, \quad i=1, \dots, n.
	\end{equation}

	%%%%%%%%%%%%%%%%%%%%%%%%%%%%%%%%%%%%%%%%%%%%%%%

	\section{$k$-Regular Factorizations of Multi-Analytic Operators}
	Let \(\Theta : \Gamma \otimes \mathcal{E} \to \Gamma \otimes \mathcal{E}_*\) be a contractive multi-analytic operator admitting a factorization \(\Theta = \Theta_k\cdots \Theta_1\), where  for each $i=1,\cdots,k,$ the operators \(\Theta_i : \Gamma \otimes \mathcal{E}_i \to \Gamma \otimes \mathcal{E}_{i+1}\) are contractive multi-analytic operators with $\mathcal E=\mathcal E_1$ and $\mathcal E_{k+1}=\mathcal E_*$. The associated defect operators are defined by
	\[
	\Delta_\Theta \coloneqq (I - \Theta^* \Theta)^{1/2}, \quad \Delta_i \coloneqq (I - \Theta_i^* \Theta_i)^{1/2}, \quad \Delta_{*i} \coloneqq (I - \Theta_i \Theta_i^*)^{1/2} \quad (i=1,\dots,k.).
	\]
	We associate with this factorization a linear isometry \(Z_k: \overline{\Delta_\Theta (\Gamma \otimes \mathcal{E})} \to \overline{\Delta_k (\Gamma \otimes \mathcal{E}_k)} \oplus\cdots\oplus \overline{\Delta_1 (\Gamma \otimes \mathcal{E}_1)}\) determined by
	\begin{equation}\label{k_regular}
		Z_k(\Delta_\Theta f) \coloneqq \Delta_k\Theta_{k-1}\cdots \Theta_1 f \oplus\cdots\oplus \Delta_1 f \quad \text{for all } f \in \Gamma \otimes \mathcal{E}.
	\end{equation}
	\begin{definition}\label{def_Z_k}
		The factorization $\Theta = \Theta_k \cdots \Theta_1$ is said to be an \emph{$k$-regular factorization} if $Z_k$ is unitary, that is,
		\[
		\overline{\{ \Delta_k\Theta_{k-1}\cdots \Theta_1 f \oplus\cdots\oplus \Delta_1 f  : f \in \Gamma \otimes \mathcal{E}_1 \}} 
		=
		\overline{\Delta_k(\Gamma \otimes \mathcal{E}_k)} \oplus\cdots\oplus \overline{\Delta_1 (\Gamma \otimes \mathcal{E}_1)}.
		\]
	\end{definition}
	
	Consider the factorization $\Theta = \Theta_k \cdots \Theta_1$ given in \eqref{k_regular}. The index set $\{1, \dots, k\}$ is partitioned into $r$ disjoint subsets $J_1, \dots, J_r$, defined by
	\[
	J_1 = \{ j_1, \dots, 1 \}, \quad \dots, \quad
	J_i = \{ j_i, \dots, j_{i-1} + 1 \}, \quad \dots, \quad
	J_r = \{ j_r, \dots, j_{r-1} + 1 \},
	\]
	where $1 \leq j_1 < j_2 < \dots < j_r = k$. Let $\Theta_{J_i}$ denote the product of the operators indexed by $J_i$, that is,
	\[
	\Theta_{J_1} \coloneqq \Theta_{j_1} \cdots \Theta_1, \quad \dots, \quad
	\Theta_{J_i} \coloneqq \Theta_{j_i} \cdots \Theta_{j_{i-1} + 1}, \quad \dots, \quad
	\Theta_{J_r} \coloneqq \Theta_k \cdots \Theta_{j_{r-1} + 1}.
	\]
	Since the operators $\Theta_j$ are assumed to be contractions, each operator $\Theta_{J_i}$ ($i = 1, \dots, r$) is also a contraction. According to Definition~\ref{def_Z_k}, the aggregated factorization
	\[
	\Theta= \Theta_{J_r} \cdots \Theta_{J_1}
	\]
	is called an $r$-regular factorization if the isometry associated with this partitioned factorization,
	\[
	Z_r^{J_r, \dots, J_1} \colon 
	\overline{\Delta_\Theta(\Gamma \otimes \mathcal{E}_1)}
	\to 
	\overline{\Delta_{\Theta_{J_r}}(\Gamma \otimes \mathcal{E}_{j_{r-1}+1})}
	\oplus \cdots \oplus
	\overline{\Delta_{\Theta_{J_1}}(\Gamma \otimes \mathcal{E}_1)},
	\]
	defined by
	\[
	Z_r^{J_r, \dots, J_1}(\Delta_\Theta f)
	\coloneqq
	\Delta_{\Theta_{J_r}} \Theta_{J_{r-1}} \cdots \Theta_{J_1} f
	\oplus \cdots \oplus
	\Delta_{\Theta_{J_1}} f, \quad f\in\Gamma \otimes \mathcal{E}_1
	\]
	is unitary. For notational convenience, when the partition consists entirely of singletons, that is,
	\(J_i=\{i\},~	i=1,\dots,k,\)	the associated isometry is denoted simply by \(Z_k\) instead of
	\(Z_k^{\{k\},\{k-1\},\dots,\{1\}}.\)
	Furthermore, the sub-factorization
	\[
	\Theta_{J_i}
	=
	\Theta_{j_i}\cdots\Theta_{j_{i-1}+1}
	\]
	is said to be \(|J_i|\)-regular if and only if the associated isometry
	\[
	Z_{|J_i|}^{\{j_i\},\dots,\{j_{i-1}+1\}}
	\colon
	\overline{\Delta_{\Theta_{J_i}}
		(\Gamma\otimes\mc E_{j_{i-1}+1})}
	\to
	\overline{\Delta_{\Theta_{j_i}}
		(\Gamma\otimes\mc E_{j_i})}
	\oplus \cdots \oplus
	\overline{\Delta_{\Theta_{j_{i-1}+1}}
		(\Gamma\otimes\mc E_{j_{i-1}+1})}
	\]
	defined by
	\begin{align*}
		&
		Z_{|J_i|}^{\{j_i\},\dots,\{j_{i-1}+1\}}
		(\Delta_{\Theta_{J_i}}x)
		\\
		&\qquad\coloneqq
		\Delta_{\Theta_{j_i}}
		\Theta_{j_i-1}\cdots\Theta_{j_{i-1}+1}x
		\oplus
		\Delta_{\Theta_{j_i-1}}
		\Theta_{j_i-2}\cdots\Theta_{j_{i-1}+1}x
		\oplus \cdots \oplus
		\Delta_{\Theta_{j_{i-1}+1}}x,
	\end{align*}
	for \(x\in \Gamma\otimes\mc E_{j_{i-1}+1}\), is unitary. In the trivial case where \(J_i=\{j_i\}\), the corresponding isometry
	\[
	Z_1^{\{j_i\}}
	\colon
	\overline{\Delta_{\Theta_{j_i}}
		(\Gamma\otimes\mc E_{j_i})}
	\to
	\overline{\Delta_{\Theta_{j_i}}
		(\Gamma\otimes\mc E_{j_i})}
	\]
	is understood to be the identity operator.

	Since contractive multi-analytic operators are, in particular, contractions, Propositions~2.2 and~2.3 of \cite{HM2026I} will be used repeatedly throughout the paper in the setting of contractive multi-analytic operators. For convenience, the corresponding analogues	 for contractive multi-analytic operators, which follow as particular cases of the corresponding propositions for contractions established in \cite{HM2026I}.

	\begin{proposition}\label{partition_multi}
		Let \(\Theta : \Gamma \otimes \mathcal{E} \to \Gamma \otimes \mathcal{E}_*\) be a contractive multi-analytic operator admitting a factorization \(\Theta = \Theta_k \cdots \Theta_1\), where, for $i=1,\dots,k$ with $k \geq 2$, the operators \(\Theta_i : \Gamma \otimes \mathcal{E}_i \to \Gamma \otimes \mathcal{E}_{i+1}\) are contractive multi-analytic operators satisfying $\mathcal E=\mathcal E_1$ and $\mathcal E_{k+1}=\mathcal E_*$. The following statements are equivalent
		\begin{enumerate}[\rm (i)]
			\item The factorization \(\Theta = \Theta_k \cdots \Theta_1\) is $k$-regular.
			
			\item For every disjoint partition $J_1, \dots, J_r$ of $\{1, \dots, k\}$, the factorization $\Theta = \Theta_{J_r} \cdots \Theta_{J_1}$ is $r$-regular, and for each $i \in \{1, \dots, r\}$, the sub-factorization $\Theta_{J_i} = \Theta_{j_i} \cdots \Theta_{j_{i-1} + 1}$ is $|J_i|$-regular.
			
			\item There exists a disjoint partition $J_1, \dots, J_r$ of $\{1, \dots, k\}$ such that the factorization $\Theta = \Theta_{J_r} \cdots \Theta_{J_1}$ is $r$-regular, and for each $i \in \{1, \dots, r\}$, the sub-factorization $\Theta_{J_i} = \Theta_{j_i} \cdots \Theta_{j_{i-1} + 1}$ is $|J_i|$-regular.
		\end{enumerate}
	\end{proposition}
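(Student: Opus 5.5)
The plan is to treat $\Theta$ and the $\Theta_i$ purely as contractions between Hilbert spaces, forgetting the multi-analytic structure. Definition~\ref{def_Z_k} and the partitioned isometries $Z_r^{J_r,\dots,J_1}$ and $Z_{|J_i|}^{\{j_i\},\dots,\{j_{i-1}+1\}}$ involve only products, defect operators $\Delta$, and closures of ranges. These are the same objects that appear in the definitions of $k$-regularity for products of contractions in \cite{HM2026I}. So the statement should follow by applying the contraction version (Proposition~2.2 of \cite{HM2026I}) to the contractions $\Theta_i\in B(\Gamma\otimes\mathcal E_i,\Gamma\otimes\mathcal E_{i+1})$. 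To make this airtight, one checks that the notions coincide: $\Delta_i=(I-\Theta_i^*\Theta_i)^{1/2}$ is the defect operator $D_{\Theta_i}$ of the contraction $\Theta_i$, and the map $Z_k$ in \eqref{k_regular} is the isometry attached in \cite{HM2026I} to the product $\Theta_k\cdots\Theta_1$. Hence ``$k$-regular'' has the same meaning in both settings.

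For completeness I would also sketch the argument intrinsically. The key identity is a factorization of isometries. For a partition $J_1,\dots,J_r$, consider the block-diagonal isometry $\bigoplus_{i} Z_{|J_i|}^{\{j_i\},\dots,\{j_{i-1}+1\}}$ acting on $\bigoplus_i \overline{\Delta_{\Theta_{J_i}}(\Gamma\otimes\mathcal E_{j_{i-1}+1})}$. I would verify on the dense set $\{\Delta_\Theta f\}$ that
\[
Z_k=\Bigl(\bigoplus_{i=r}^{1} Z_{|J_i|}^{\{j_i\},\dots,\{j_{i-1}+1\}}\Bigr) Z_r^{J_r,\dots,J_1}.
\]
To do this, apply $Z_{|J_i|}$ to $\Delta_{\Theta_{J_i}}x$ with $x=\Theta_{J_{i-1}}\cdots\Theta_{J_1}f$. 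The components produced are $\Delta_{\Theta_j}\Theta_{j-1}\cdots\Theta_1 f$ for $j\in J_i$, and these are exactly the corresponding components of $Z_k(\Delta_\Theta f)$.

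Given this identity, the equivalences are immediate. For (iii)$\Rightarrow$(i), a product of unitaries is unitary. For (i)$\Rightarrow$(ii), suppose $Z_k=WZ$ is unitary, where $W$ and $Z$ are isometries. Then $W$ is surjective, hence unitary, and $Z=W^*Z_k$ is unitary. Moreover, a block-diagonal isometry is unitary if and only if each block is unitary. Finally, (ii)$\Rightarrow$(iii) is trivial, using for instance the singleton partition.

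The main obstacle is the careful index bookkeeping in the factorization identity, in particular matching the ordering of the direct sums. Everything else is elementary. In the paper itself I would simply cite Proposition~2.2 of \cite{HM2026I}, since contractive multi-analytic operators are contractions.
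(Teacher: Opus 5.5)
Your proposal is correct and takes essentially the paper's route. The paper likewise treats the $\Theta_i$ as contractions, invokes \cite{HM2026I}, and bases the proof on the identity $Z_k=\bigl(\bigoplus_{i=r}^{1} Z_{|J_i|}^{\{j_i\},\dots,\{j_{i-1}+1\}}\bigr) Z_r^{J_r,\dots,J_1}$. Your check of this identity on the dense set $\{\Delta_\Theta f\}$, together with the unitarity argument (the left factor is surjective and hence unitary, and a block-diagonal isometry is unitary exactly when each block is), correctly fills in the details the paper leaves to the citation.
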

	
	\begin{proof}
		
		The proof follows from the identity established in the proof of Proposition 2.2 in \cite{HM2026I}
		\begin{equation}\label{deco_relation_ZK}
			Z_k = \left( \bigoplus_{i=r}^{1} Z_{|J_i|}^{\{j_i\}, \dots, \{j_{i-1}+1\}} \right) Z_r^{J_r, \dots, J_1},
		\end{equation}
		where $Z_k$, $Z_{|J_i|}^{\{j_i\}, \dots, \{j_{i-1}+1\}}$, and $Z_r^{J_r, \dots, J_1}$ are the isometries associated with the factorizations
		\[
		\Theta = \Theta_k \cdots \Theta_1, 
		\quad 
		\Theta_{J_i} := \Theta_{j_i} \cdots \Theta_{j_{i-1}+1}, 
		\quad 
		\Theta = \Theta_{J_r} \cdots \Theta_{J_1},
		\]
		respectively.
	\end{proof}

	\begin{proposition}\label{propo_k_regular_multi}
		Let \(\Theta : \Gamma \otimes \mathcal{E} \to \Gamma \otimes \mathcal{E}_*\) be a contractive multi-analytic operator admitting a factorization \(\Theta = \Theta_k \cdots \Theta_1\), where, for $i=1,\dots,k$ with $k \geq 2$, the operators \(\Theta_i : \Gamma \otimes \mathcal{E}_i \to \Gamma \otimes \mathcal{E}_{i+1}\) are contractive multi-analytic operators satisfying $\mathcal E=\mathcal E_1$ and $\mathcal E_{k+1}=\mathcal E_*$. Then the following statements are equivalent
		\begin{enumerate}[label=\rm(\roman*), ref=\thetheorem(\roman*)]
			\item The factorization \( \Theta = \Theta_k \cdots \Theta_1 \) is a \( k \)-regular factorization.
			
			\item The factorization \( \Theta = \Theta_k(\Theta_{k-1} \cdots \Theta_1) \) is a $2$-regular factorization, and \( \Theta_{k-1} \cdots \Theta_1 \) is a \( (k-1) \)-regular factorization.
			
			\item The factorization \( \Theta_k \cdots \Theta_2 \) is a \( (k-1) \)-regular factorization, and the factorization \( \Theta = (\Theta_k \cdots \Theta_2) \Theta_1 \) is a $2$-regular factorization.
			
			\item The factorizations $(\Theta_k \cdots \Theta_{j+1})(\Theta_j \cdots \Theta_1)$ are $2$-regular factorizations for all $j = 1, \dots, k-1$.
		\end{enumerate}
	\end{proposition}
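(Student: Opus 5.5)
The plan is to derive all four equivalences from Proposition~\ref{partition_multi} and the factorization identity \eqref{deco_relation_ZK}. The key observation is that a product $AB$ of two isometries is unitary if and only if both are unitary. Indeed, if $AB$ is unitary then $A$ is onto, so $A$ is unitary, and then $B = A^*(AB)$ is unitary. The converse is immediate. Each $Z$ operator appearing in \eqref{deco_relation_ZK} is an isometry, and a direct sum of isometries is unitary exactly when each summand is unitary.

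For (i)$\Leftrightarrow$(ii), I apply \eqref{deco_relation_ZK} with the partition $J_1=\{k-1,\dots,1\}$, $J_2=\{k\}$. This gives
\[
Z_k=\bigl(Z_1^{\{k\}}\oplus Z_{k-1}^{\{k-1\},\dots,\{1\}}\bigr)Z_2^{J_2,J_1}.
\]
Here $Z_1^{\{k\}}$ is the identity by convention. Hence $Z_k$ is unitary if and only if $Z_2^{J_2,J_1}$ and $Z_{k-1}^{\{k-1\},\dots,\{1\}}$ are both unitary, which is exactly statement (ii). Equivalently, this is the case $r=2$ of the equivalence (i)$\Leftrightarrow$(iii) in Proposition~\ref{partition_multi}, combined with the implication (i)$\Rightarrow$(ii) there. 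The equivalence (i)$\Leftrightarrow$(iii) is symmetric: I use the partition $J_1=\{1\}$, $J_2=\{k,\dots,2\}$. The sub-factorization $\Theta_k\cdots\Theta_2$ has its associated isometry defined on $\overline{\Delta_{\Theta_k\cdots\Theta_2}(\Gamma\otimes\mathcal E_2)}$, and the same argument applies.

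For (i)$\Rightarrow$(iv), for each $j$ I take the partition $J_1=\{j,\dots,1\}$, $J_2=\{k,\dots,j+1\}$. Proposition~\ref{partition_multi}(i)$\Rightarrow$(ii) then gives 2-regularity of $(\Theta_k\cdots\Theta_{j+1})(\Theta_j\cdots\Theta_1)$.

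The converse (iv)$\Rightarrow$(i) is the main obstacle, since (iv) only asserts 2-regularity of the coarse splittings and says nothing directly about the inner sub-factorizations. I would argue by induction on $k$, where the case $k=2$ is trivial. Assume the result holds for $k-1$ factors. By (ii) it suffices to show that $\Theta_{k-1}\cdots\Theta_1$ is $(k-1)$-regular, which by the induction hypothesis reduces to showing that $(\Theta_{k-1}\cdots\Theta_{j+1})(\Theta_j\cdots\Theta_1)$ is 2-regular for $j\le k-2$.

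To get this, I apply \eqref{deco_relation_ZK} to the three-block factorization $\Theta=\Theta_k\,(\Theta_{k-1}\cdots\Theta_{j+1})\,(\Theta_j\cdots\Theta_1)$, treated as a product of three contractions $C\,B\,A$ with $C=\Theta_k$, $B=\Theta_{k-1}\cdots\Theta_{j+1}$ and $A=\Theta_j\cdots\Theta_1$. Grouping as $(CB)A$ expresses $Z_3$ as $(Z_2^{C,B}\oplus I)Z_2^{CB,A}$. Grouping as $C(BA)$ expresses $Z_3$ as $(I\oplus Z_2^{B,A})Z_2^{C,BA}$.

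Hypothesis (iv) gives that $Z_2^{CB,A}$ (take $j$) and $Z_2^{C,BA}$ (take $j=k-1$) are unitary. Statement (iv) also applied at index $j+1$ would give $(\Theta_k\cdots\Theta_{j+2})(\Theta_{j+1}\cdots\Theta_1)$, which is not directly needed. What I need instead is that $Z_2^{C,B}$ is unitary. For this I apply the 3-factor case to $C\,B\,A$, which is precisely the classical-type statement that 2-regularity of both $(CB)A$ and $C(BA)$ implies 3-regularity. This is the content of the corresponding Proposition 2.3 for contractions in \cite{HM2026I}, and I would invoke it there.

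If it must be proved directly, I would show that $\operatorname{range} Z_3$ is dense in the following way. From unitarity of $Z_2^{C,BA}$, the range of $Z_3$ is the image of a dense set under $I\oplus Z_2^{B,A}$. So it suffices to show that $Z_2^{B,A}$ is onto. To see this, take a vector orthogonal to its range and use the unitarity of $Z_2^{CB,A}$ together with the intertwining relations $\Delta_{CB}^2=B^*\Delta_C^2B+\Delta_B^2$. Once $Z_3$ is unitary, 2-regularity of $BA$ follows by unitarity of the product, closing the induction.
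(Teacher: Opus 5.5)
Your proposal is correct. The paper gives no argument of its own here: it observes that contractive multi-analytic operators are contractions and defers to Proposition~2.3 of \cite{HM2026I}. You instead reconstruct the proof from the identity \eqref{deco_relation_ZK} and the fact that a product of two isometries is unitary exactly when both factors are. That handles (i)$\Leftrightarrow$(ii), (i)$\Leftrightarrow$(iii) and (i)$\Rightarrow$(iv) directly. Your induction then correctly reduces (iv)$\Rightarrow$(i) to the three-factor statement that $2$-regularity of $(CB)A$ and $C(BA)$ forces $3$-regularity of $CBA$.

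Citing \cite{HM2026I} for that three-factor case amounts to citing the whole proposition, which is all the paper does. So it is your direct argument that makes the proof self-contained. That argument works, and it can be stated without any orthogonal-vector computation. Write $\mathcal D_X$ for the closure of the range of $\Delta_X$. Unitarity of $Z_2^{CB,A}$ and $Z_2^{C,BA}$, together with the two groupings of $Z_3$, gives
\[
\im Z_3=\im Z_2^{C,B}\oplus\mathcal D_A=\mathcal D_C\oplus\im Z_2^{B,A}.
\]
The right-hand side contains $\mathcal D_C\oplus 0\oplus 0$, so $\mathcal D_C\oplus 0\sub\im Z_2^{C,B}$. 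Subtracting $\Delta_C Bx\oplus 0$ from $Z_2^{C,B}\Delta_{CB}x=\Delta_C Bx\oplus\Delta_B x$ gives $0\oplus\Delta_B x\in\im Z_2^{C,B}$ for every $x$. Since the range of an isometry is closed, $Z_2^{C,B}$ is onto. Hence $Z_3$ is unitary, and by the other grouping so is $Z_2^{B,A}$, i.e.\ $BA$ is $2$-regular. The identity $\Delta_{CB}^2=B^*\Delta_C^2B+\Delta_B^2$ you mention is needed only to know that $Z_2^{C,B}$ is an isometry.
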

	\begin{proof}
		The proof is omitted, since it proceeds analogously to the proof of Proposition~2.3 in \cite{HM2026I}.
	\end{proof}
	%%%%%%%%%%%%%%%%%%%%%%%%%%%%%%%%%%%%

	To discuss \(k\)-regular factorizations and joint invariant subspaces, we require a generalized version of Lemma 3.1 from \cite{Po06}. The result is stated below; its proof follows by a straightforward adaptation of the arguments provided in the proof of Lemma 3.1 in \cite{Po06}.   
	\begin{lemma}\label{cuntz_isometry}
		Let \(\Theta: \Gamma \otimes \mathcal{E} \to \Gamma \otimes \mathcal{E}_*\) be a contractive multi-analytic operator. Let \(C \coloneqq [C_1, \ldots, C_n]\) be the row isometry defined on the subspace \( \overline{\Delta_\Theta(\Gamma \otimes \mathcal{E})}\) by
		\[
		C_j \Delta_\Theta f \coloneqq \Delta_\Theta (S_j \otimes I_{\mathcal{E}}) f, \quad f \in \Gamma \otimes \mathcal{E},
		\]
		for each \(j = 1, \ldots, n\), where \(\Delta_\Theta \coloneqq (I - \Theta^* \Theta)^{1/2}\). Then \(C\) is a Cuntz row isometry (i.e., \(\sum_{j=1}^n C_j C_j^* = I_\mathcal{K}\)) if and only if
		\begin{equation}\label{szego_condition}
			\overline{\Delta_\Theta(\Gamma \otimes \mathcal{E})} = \overline{\Delta_\Theta((\Gamma \otimes \mathcal{E}) \ominus \mathcal{E})}.
		\end{equation}
		
		Suppose that \(\Theta\) admits a factorization
		\[
		\Theta = \Theta_k \Theta_{k-1} \cdots \Theta_1,
		\]
		where \(\Theta_i : \Gamma \otimes \mathcal{E}_i \to \Gamma \otimes \mathcal{E}_{i+1}\) are contractive multi-analytic operators for \(i=1, \cdots, k\), with \(\mathcal{E}_1 = \mathcal{E}\) and \(\mathcal{E}_{k+1} = \mathcal{E}_*\). Let \(C^{(i)} \coloneqq [C_1^{(i)}, \ldots, C_n^{(i)}]\) denote the row isometry defined on \(\overline{\Delta_i(\Gamma \otimes \mathcal{E}_i)}\) associated with \(\Theta_i\), where \(\Delta_i \coloneqq (I - \Theta_i^* \Theta_i)^{1/2}\).
		Then the following intertwining relation holds
		\begin{equation}\label{Int-XT-gen}
			Z_k C_j = \begin{pmatrix}
				C_j^{(k)} & 0 & \cdots & 0 \\
				0 & C_j^{(k-1)} & \cdots & 0 \\
				\vdots & \vdots & \ddots & \vdots \\
				0 & 0 & \cdots & C_j^{(1)}
			\end{pmatrix} Z_k, \quad j = 1, \ldots, n,
		\end{equation}
		where \(Z_k\) is the isometric map associated with the factorization \(\Theta = \Theta_k \cdots \Theta_1\). Furthermore, if the factorization is $k$-regular, then \(C\) is a Cuntz row isometry if and only if \(C^{(i)}\) is a Cuntz row isometry for every \(i = 1, \ldots, k\).
	\end{lemma}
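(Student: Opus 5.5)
The proof has three parts: the Cuntz criterion for $C$, the intertwining relation \eqref{Int-XT-gen}, and the transfer of the Cuntz property under $k$-regularity.

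\emph{Cuntz criterion.} First we check that $C$ is a well-defined row isometry. For $f_1,\dots,f_n \in \Gamma\otimes\mathcal E$, the relation $\Theta(S_j\otimes I)=(S_j\otimes I)\Theta$ gives
\[
\Big\|\sum_j \Delta_\Theta (S_j\otimes I)f_j\Big\|^2=\Big\|\sum_j (S_j\otimes I)f_j\Big\|^2-\Big\|\sum_j (S_j\otimes I)\Theta f_j\Big\|^2 .
\]
Since the $S_j\otimes I$ have orthogonal ranges, the right-hand side equals $\sum_j\big(\|f_j\|^2-\|\Theta f_j\|^2\big)=\sum_j\|\Delta_\Theta f_j\|^2$. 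Hence $[C_1,\dots,C_n]$ extends to an isometry from $\bigoplus_{j=1}^n \overline{\Delta_\Theta(\Gamma\otimes\mathcal E)}$ into $\overline{\Delta_\Theta(\Gamma\otimes\mathcal E)}$. Its range is the closure of $\Delta_\Theta$ applied to $\bigvee_j (S_j\otimes I)(\Gamma\otimes\mathcal E)=(\Gamma\otimes\mathcal E)\ominus\mathcal E$, where $\mathcal E$ stands for $e_\emptyset\otimes\mathcal E$. The isometry $C$ is Cuntz exactly when this range is everything, which is \eqref{szego_condition}.

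\emph{Intertwining relation.} It suffices to check \eqref{Int-XT-gen} on the dense set of vectors $\Delta_\Theta f$. We have
\[
Z_kC_j\Delta_\Theta f=Z_k\Delta_\Theta(S_j\otimes I)f=\bigoplus_i \Delta_i\Theta_{i-1}\cdots\Theta_1(S_j\otimes I)f .
\]
Because each $\Theta_l$ is multi-analytic, we may commute $S_j\otimes I$ through the product to obtain $\bigoplus_i \Delta_i(S_j\otimes I)\Theta_{i-1}\cdots\Theta_1 f$. Each summand is $C^{(i)}_j\Delta_i\Theta_{i-1}\cdots\Theta_1f$ by the definition of $C^{(i)}$. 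Both sides are bounded, so the identity extends from the dense set.

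\emph{Transfer under $k$-regularity.} Assume the factorization is $k$-regular, so $Z_k$ is unitary. Then \eqref{Int-XT-gen} says $C_j=Z_k^*\big(\bigoplus_i C^{(i)}_j\big)Z_k$ for every $j$. Consequently
\[
\sum_j C_jC_j^*=Z_k^*\Big(\bigoplus_i \sum_jC^{(i)}_jC^{(i)*}_j\Big)Z_k .
\]
This equals the identity if and only if each block $\sum_j C^{(i)}_jC^{(i)*}_j$ is the identity. Note that \eqref{Int-XT-gen} itself requires no regularity; only this last step uses unitarity.

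The main obstacle is the well-definedness and isometry check in the first step, which underlies everything else. It uses nothing beyond orthogonality of the ranges of the $S_j$ and the multi-analytic property of $\Theta$.
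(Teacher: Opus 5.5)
Your proof is correct and takes the route the paper intends when it defers to the proof of Lemma 3.1 in \cite{Po06}: you compute the isometry and its range using the orthogonal ranges of the $S_j\otimes I$, check the intertwining relation on the dense set $\Delta_\Theta(\Gamma\otimes\mathcal{E})$ by commuting $S_j\otimes I$ through the multi-analytic factors, and transfer the Cuntz property by conjugating with the unitary $Z_k$. Your remark that only the last step needs $k$-regularity (unitarity of $Z_k$) is also accurate.
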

	
	First, let us recall that for \( k=2 \), G. Popescu, in the article \cite{Po06}, established a one-to-one correspondence between the joint invariant subspaces of c.n.c. row contractions and $2$-regular factorizations of characteristic functions. He also developed a functional model for a given $2$-regular factorization, described as follows
	\begin{theorem}{ \rm (Theorem 3.2, \cite{Po06})}\label{2_r_invariant}
		Let $T\coloneqq[T_1,\cdots,T_n]$ be a c.n.c. row contraction on a Hilbert space $\mathcal{H}$, and let $\Theta: \Gamma \otimes \mathcal{E} \to \Gamma \otimes \mathcal{E}_*$ be a contractive multi-analytic operator that coincides with the characteristic function of $T$.
		
		If $\mathcal{H}_1 \sub \mathcal{H}$ is a joint invariant subspace for $T_1,\ldots, T_n$, then there exists a $2$-regular factorization $\Theta = \Theta_2 \Theta_1$, where $\Theta_1: \Gamma \otimes \mathcal{E} \to \Gamma \otimes \mathcal{F}$ and $\Theta_2: \Gamma \otimes \mathcal{F} \to \Gamma \otimes \mathcal{E}_*$ are contractive multi-analytic operators, such that $T$ is unitarily equivalent to a row contraction $\wh{T} \coloneqq [\wh{T}_1,\ldots, \wh{T}_n]$ defined on the model space
		\begin{align*}
			\wh{\mathcal H} \coloneqq \bigg[ (\Gamma \otimes \mathcal{E}_*) & \oplus \overline{\Delta_2 (\Gamma \otimes \mathcal{F})} \oplus \overline{\Delta_1 (\Gamma \otimes \mathcal{E})} \bigg] \\
			& \ominus \big\{ \Theta_2 \Theta_1 f \oplus \Delta_2 \Theta_1 f \oplus \Delta_1 f : f \in \Gamma \otimes \mathcal{E} \big\}.
		\end{align*}
		For each $j=1, \cdots, n$, the action of the adjoint operators $\wh{T}_j^*$ on the 
		Hilbert space $\wh{\mathcal H}$ is defined by
		\[
		\wh{T}_j^*(f \oplus \varphi \oplus \psi) \coloneqq (S_j^* \otimes I_{\mathcal{E}_*}) f \oplus C_j^{(2)*} \varphi \oplus C_j^{(1)*} \psi, \quad (f \oplus \varphi \oplus \psi \in \wh{\mathcal H}),
		\]
		where $S_1, \cdots, S_n$ denote the left creation operators on the free Fock space $\Gamma$, while $C^{(1)} = [C_1^{(1)}, \cdots, C_n^{(1)}]$ and $C^{(2)} = [C_1^{(2)}, \cdots, C_n^{(2)}]$ are the row isometries associated with $\Theta_1$ and $\Theta_2$ respectively, as defined in Lemma $\ref{cuntz_isometry}$.
		Moreover, the subspaces of $\wh{\mathcal H}$ corresponding to $\mathcal{H}_1$ and $\mathcal{H}_2 \coloneqq \mathcal{H} \ominus \mathcal{H}_1$ are given by
		\begin{align*}
			\wh{\mathcal H}_1 \coloneqq \bigg\{ \Theta_2 g \oplus \Delta_2 g \oplus h : \ & g \in \Gamma \otimes \mathcal{F}, \ h \in \overline{\Delta_1 (\Gamma \otimes \mathcal{E})} \bigg\} \\
			& \ominus \big\{ \Theta_2 \Theta_1 f \oplus \Delta_2 \Theta_1 f \oplus \Delta_1 f : f \in \Gamma \otimes \mathcal{E} \big\},
		\end{align*}
		and
		\begin{align*}
			\wh{\mathcal H}_2 \coloneqq \bigg[ (\Gamma \otimes \mathcal{E}_*) & \oplus \overline{\Delta_2 (\Gamma \otimes \mathcal{F})} \oplus \{0\} \bigg] \\
			& \ominus \big\{ \Theta_2 g \oplus \Delta_2 g \oplus \{0\} : g \in \Gamma \otimes \mathcal{F} \big\},
		\end{align*}
		respectively.
		
		Conversely, every $2$-regular factorization $\Theta = \Theta_2 \Theta_1$ generates, via the formulas above, the subspaces $\wh{\mathcal H}_1$ and $\wh{\mathcal H}_2$ satisfying
		\begin{enumerate}[\rm (i)]
			\item $\wh{\mathcal H}_1$ is invariant under each operator $\wh{T}_j$, for $j=1,\ldots,n$;
			\item$\wh{\mathcal H}_2 = \wh{\mathcal H} \ominus \wh{\mathcal H}_1$.
		\end{enumerate}
		Under this identification, $\wh{\mathcal H}_1$ corresponds to a subspace $\mathcal{H}_1 \sub \mathcal{H}$ which is invariant under each operator $T_i$, $j=1,\ldots,n$.
	\end{theorem}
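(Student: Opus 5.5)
We prove Theorem~\ref{2_r_invariant} by passing to Popescu's functional model. Identify $T$ with its model, rather than working in $\mathcal H$ directly. Since $T$ is c.n.c., its minimal isometric dilation space $\mathcal K$ satisfies \eqref{c.n.c. conditions}. Using \eqref{orthogonal_decomposition_K}, we realize $\mathcal K$ unitarily as the closure of
\[
(\Gamma\otimes\mathcal E_*)\oplus\overline{\Delta_\Theta(\Gamma\otimes\mathcal E)}, \qquad \text{that is,} \qquad \{\Theta f\oplus \Delta_\Theta f\} + (\Gamma\otimes\mathcal E_*)\oplus 0 .
\]
Here $M_V(\mathcal L_*)$ corresponds to $\Gamma\otimes\mathcal E_*$ via $\Phi^{\mathcal L_*}$, and $M_V(\mathcal L)$ corresponds to the graph $\{\Theta f\oplus\Delta_\Theta f\}$. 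The space $\mathcal H$ is then the orthogonal complement of this graph, and $V_j^*$ acts as $(S_j^*\otimes I)\oplus C_j^*$, with $C$ the row isometry of Lemma~\ref{cuntz_isometry}. This is the standard Popescu model, so $T_j^*=V_j^*|_{\mathcal H}$ has the stated form.

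\textbf{Direct part.} Given a joint invariant subspace $\mathcal H_1$, let $T^{(1)}=T|_{\mathcal H_1}$. Let $\mathcal K_1=\bigvee_\alpha V_\alpha\mathcal H_1$; this is a minimal isometric dilation space for $T^{(1)}$, and $\mathcal H\ominus\mathcal H_1=\mathcal H_2$ is co-invariant. Following Sz.-Nagy--Foia\c{s}, decompose $\mathcal K_1=\mathcal H_1\oplus M_V(\mathcal L^{(1)})$ and $\mathcal K\ominus\mathcal K_1 \supseteq$ the parts generated by $\mathcal H_2$. Then define
\[
\Theta_1\colon \Gamma\otimes\mathcal L\to\Gamma\otimes\mathcal F, \qquad \Theta_2\colon\Gamma\otimes\mathcal F\to\Gamma\otimes\mathcal L_*,
\]
as compressions of the inclusions $M_V(\mathcal L)\subseteq M_V(\mathcal L^{(1)})\oplus\cdots$ and $\cdots\subseteq M_V(\mathcal L_*)$. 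The wandering space is $\mathcal F=\overline{(P_{\mathcal K_1^\perp}\text{-part})}$, chosen so that $\Gamma\otimes\mathcal F$ is unitarily the wandering-generated part of the dilation associated with $\mathcal H_2$. Both maps commute with $V$, hence are multi-analytic. The factorization $\Theta=\Theta_2\Theta_1$ comes from factoring the orthogonal projection $P_{M_V(\mathcal L_*)}|_{M_V(\mathcal L)}$ through the intermediate space.

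\textbf{Regularity.} Compute $\Delta_1,\Delta_2$ as norms of complementary projections. Then $Z_2$ is unitary precisely because the c.n.c.\ identity \eqref{c.n.c. conditions} forces the residual pieces to be spanned by the images of $\Delta_2\Theta_1 f\oplus\Delta_1 f$. The intertwining \eqref{Int-XT-gen} then transports the model to $\wh{\mathcal H}$, and $\wh{\mathcal H}_1,\wh{\mathcal H}_2$ are read off from the subspaces $\mathcal K_1\ominus M_V(\mathcal L)$ and the complement.

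\textbf{Converse.} Start from a $2$-regular factorization $\Theta=\Theta_2\Theta_1$. Use the unitary $Z_2$ to identify $\overline{\Delta_\Theta(\Gamma\otimes\mathcal E)}$ with $\overline{\Delta_2(\cdot)}\oplus\overline{\Delta_1(\cdot)}$, so that $\wh{\mathcal H}$ is the model space. Invariance of $\wh{\mathcal H}_1$ under $\wh T_j$ is equivalent to $\wh{\mathcal H}_2$ being invariant under $\wh T_j^*$. For this, check that
\[
\wh T_j^*(f\oplus\varphi\oplus0)=(S_j^*\otimes I)f\oplus C_j^{(2)*}\varphi\oplus0
\]
stays orthogonal to $\{\Theta_2 g\oplus\Delta_2 g\oplus 0\}$. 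This follows from the multi-analyticity of $\Theta_2$ and the relation $C_j^{(2)}\Delta_2=\Delta_2(S_j\otimes I)$: pair against $\Theta_2(S_j\otimes I)g\oplus\Delta_2(S_j\otimes I)g$. Finally, $\wh{\mathcal H}_1\oplus\wh{\mathcal H}_2=\wh{\mathcal H}$ is obtained by a direct orthogonality computation. It uses the graph decomposition
\[
\{\Theta_2g\oplus\Delta_2g\}\oplus\bigl(0\oplus0\oplus\overline{\Delta_1(\cdot)}\bigr) \supseteq \{\Theta f\oplus\Delta_2\Theta_1f\oplus\Delta_1f\},
\]
together with the fact that $\Theta_2$ is contractive, which makes the first summand closed.

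\textbf{Main obstacle.} The main obstacle is proving that regularity of the factorization is exactly what makes the constructed factorization coincide with $\Theta$, and conversely. More precisely, we must show that $\wh{\mathcal H}_1\subseteq\wh{\mathcal H}$ is an orthogonal complement inside the model without extra residual space. Here \eqref{c.n.c. conditions} (c.n.c.) and unitarity of $Z_2$ must be invoked carefully. I would first try to prove $\wh{\mathcal H}\ominus\wh{\mathcal H}_2=\wh{\mathcal H}_1$ by taking a vector orthogonal to both, and deducing that it lies in the closure of the graph, using surjectivity of $Z_2$.
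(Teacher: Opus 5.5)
There is a genuine gap in your direct part: you build the factorization on the wrong intermediate space, and as a result $\mathcal F$, $\Theta_1$, $\Theta_2$ are never actually defined.

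The space $\mathcal K_1=\bigvee_\alpha V_\alpha\mathcal H_1=\mathcal H_1\oplus M_V(\mathcal L^{(1)})$ does not contain $M_V(\mathcal L)$ in general. So there is no inclusion ``$M_V(\mathcal L)\sub M_V(\mathcal L^{(1)})\oplus\cdots$'' whose compression could serve as $\Theta_1$. Already for $n=1$ and $T=0$ on $\mathbb C^2$ this fails. Let $\epsilon_1,\epsilon_2$ be the standard basis of $\mathbb C^2$. Then $\mathcal K=\ell^2\otimes\mathbb C^2$, $\mathcal H=e_0\otimes\mathbb C^2$ and $M_V(\mathcal L)=\bigoplus_{m\ge1}e_m\otimes\mathbb C^2$. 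For $\mathcal H_1=e_0\otimes\mathbb C\epsilon_1$, your $\mathcal K_1=\ell^2\otimes\mathbb C\epsilon_1$ misses $e_1\otimes\epsilon_2\in M_V(\mathcal L)$.

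Your $\mathcal F$ (``the $P_{\mathcal K_1^\perp}$-part'', ``associated with $\mathcal H_2$'') is not a definition either. Its natural readings are the wandering subspace of $\mathcal K\ominus\mathcal K_1$, or the defect spaces of the compression of $T$ to $\mathcal H_2$. Both are one-dimensional in this example. But here $\Theta_T(z)=zI_2$, and the pointwise identity $\Theta_2(z)\Theta_1(z)=zI_2$ forces $\dim\mathcal F\ge2$. The middle space that actually arises is $\operatorname{span}\{e_0\otimes\epsilon_1,\,e_1\otimes\epsilon_2\}$, which contains $\mathcal H_1$.

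\textbf{The missing idea.} The paper quotes this theorem; its proof of Theorem~\ref{main_thereom_02} with $k=2$ is the relevant comparison. Work with $\mathcal K\ominus\mathcal H_2=\mathcal H_1\oplus M_V(\mathcal L)$, which is $V$-invariant since $\mathcal H_2$ is $V^*$-invariant. Take its Wold decomposition $M_V(\mathcal F)\oplus\mathcal R_1$.
\begin{itemize}
\item Because $\mathcal R_1\sub\mathcal R$, you get $M_V(\mathcal L)\sub M_V(\mathcal F)\oplus\mathcal R_1$ and $M_V(\mathcal F)\sub M_V(\mathcal L_*)\oplus(\mathcal R\ominus\mathcal R_1)$.
\item Hence $P^{\mathcal L_*}|_{M_V(\mathcal L)}=P^{\mathcal L_*}P^{\mathcal F}|_{M_V(\mathcal L)}$, which after the Fourier representations is $\Theta=\Theta_2\Theta_1$.
\item The c.n.c.\ identity $\ov{P_{\mathcal R}M_V(\mathcal L)}=\mathcal R$, combined with $P_{\mathcal R}l=P_{\mathcal R\ominus\mathcal R_1}P^{\mathcal F}l\oplus P_{\mathcal R_1}l$, identifies $\ov{\Delta_1(\Gamma\otimes\mathcal L)}$ with $\mathcal R_1$ and $\ov{\Delta_2(\Gamma\otimes\mathcal F)}$ with $\mathcal R\ominus\mathcal R_1$ unitarily. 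These are your undefined ``residual pieces'', and $Z_2$ becomes a composition of unitaries.
\item $\wh{\mathcal H}_1$ is then read off from $\mathcal H_1=(M_V(\mathcal F)\oplus\mathcal R_1)\ominus M_V(\mathcal L)$.
\end{itemize}

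\textbf{The converse.} Your argument (pairing $\wh T_j^*x$ against the graph element at $(S_j\otimes I)g$) is correct and matches the paper's computation. However, your stated main obstacle is misplaced. Set $\mathcal G=\{\Theta_2\Theta_1f\oplus\Delta_2\Theta_1f\oplus\Delta_1f\}$ and $\mathcal G_1=\{\Theta_2g\oplus\Delta_2g\oplus h\}$. Both are closed, being ranges of isometries, and $\mathcal G\sub\mathcal G_1$. Then $\wh{\mathcal H}_1=\mathcal G_1\ominus\mathcal G$, and the orthogonal complement of $\mathcal G_1$ in the ambient space is exactly $\wh{\mathcal H}_2$. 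So $\wh{\mathcal H}=\wh{\mathcal H}_1\oplus\wh{\mathcal H}_2$ is purely formal, and neither c.n.c.\ nor surjectivity of $Z_2$ enters there. Surjectivity of $Z_2$ is needed only to identify $\wh T$ with $T$, so that $\wh{\mathcal H}_1$ transfers back to a $T$-invariant subspace of $\mathcal H$.
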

	\section{Main Theorem}
	For $k \geq 2$, we state the main theorem, which establishes a correspondence between chain of joint invariant subspaces of a c.n.c. row contraction and the $k$-regular factorizations of its characteristic function. Moreover, the theorem proved by G. Popescu arises as a special case of this result when $k = 2$.

	\begin{theorem}\label{main_thereom_02}
		Let $\ttt$ be a c.n.c. row contraction on the Hilbert space $\mathcal H.$ Let $\wt{\Theta}:\Gamma\otimes\mathcal E\to\Gamma\otimes\mathcal E_* $ is a contractive multi-analytic operator such that it is coincide with the characteristic function of $T.$ Then the operator \( T \) is unitarily equivalent to the operator \( \wt{T} \), which is defined on the Hilbert space 
		\begin{align*}
			\wt{\mathcal H} \coloneqq \bigg[ (\Gamma \otimes \mathcal{E}_{*}) & \oplus \overline{\wt \Delta (\Gamma \otimes \mathcal{E})}\bigg]  \ominus \wt {\mathcal G}.
		\end{align*}
		The action of the adjoint operators is given by
		\[
		\wt{T}_j^*(f \oplus  g) \coloneqq (S_j^* \otimes I_{\mathcal{E}_{*}}) f \oplus \wt C_j^{*} g, \quad (f \oplus g\in \wt{\mathcal H}), \text{ for } j=1,\cdots,n
		\]
		here, the subspace $\wt{\mathcal G}$ is defined as
		\[
		\wt{\mathcal G} \coloneqq \{ \wt{\Theta} u \oplus \wt  \Delta u : u \in \Gamma\otimes\mathcal{E}\}, 
		\]
		where $\wt \Delta \coloneqq (I - \wt{\Theta}^* \wt{\Theta})^{1/2}$ is the defect operator, $S_1, \cdots, S_n$ are the left creation operators on the full Fock space $\Gamma$, and $\wt C = [\wt C_1, \cdots, \wt C_n]$ is the row isometries associated with $\wt{\Theta}$,  as defined in Lemma $ \ref{cuntz_isometry}$.
		
		Assume further that \( k \geq 2 \) and that 
		$$\mathcal M_1\sub\mathcal M_2\sub\cdots\sub\mathcal M_{k-1}$$ are joint invariant subspaces  of $T,$ then there exists an \(k\)-regular factorization of the form 
		\begin{equation}\label{m_regular_fact}
			\wt{\Theta} = \wt{\Theta}_k \cdots \wt{\Theta}_1,
		\end{equation}
		where \( \wt{\Theta}_i:\Gamma\otimes \mathcal{E}_i\to \Gamma\otimes \mathcal{E}_{i+1}  \) are contractive multi-analytic operators for \( i=1,\cdots,k \) with \( \mathcal{E}_1 = \mathcal{E} \) and \( \mathcal{E}_{k+1} = \mathcal{E}_* \).
		Additionally, for \( i = 1, \dots, k-1 \), the joint invariant subspaces $\mathcal M_i$ correspond to
		\begin{align*}
			\wt{\mathcal M}_i
			=
			\Big\{
			&\wt{\Theta}_k\cdots\wt{\Theta}_{i+1}u_{i+1} \oplus
			\wt Z_k^*\big(
			\wt\Delta_k\wt{\Theta}_{k-1}\cdots\wt{\Theta}_{i+1}u_{i+1}
			\oplus\cdots\oplus
			\wt\Delta_{i+1}u_{i+1}
			\oplus v_i\oplus\cdots\oplus v_1
			\big)\\
			&\quad\quad : u_{i+1}\in \Gamma\otimes \mathcal E_{i+1},v_j\in \ov{\im\wt \Delta_j} , j=1,\dots,i
			\Big\}
			\ominus\wt{\mathcal G},
		\end{align*}
		Moreover,  the orthogonal complement \( \wt{\mathcal N}_i = \wt{\mathcal H} \ominus \wt{\mathcal M}_i \) is given by
		\begin{align*}
			\wt { \mathcal N}_i&= \left[ \Gamma\otimes\mathcal{E}_* \oplus \wt{Z}_k^{*} \left( \ov{\wt \Delta_k (\Gamma\otimes\mathcal{E}_{k})} \oplus \cdots \oplus \ov{\wt \Delta_{i+1} (\Gamma\otimes\mathcal{E}_{i+1})} \oplus \{0\} \oplus \cdots \oplus \{0\} \right) \right] \\
			&\quad \ominus \left\{ \wt {\Theta}_k \cdots \wt {\Theta}_{i+1} u_{i+1} + \wt{Z}_k^{*} ( \wt \Delta_k \wt {\Theta}_{k-1} \cdots \wt {\Theta}_{i+1} u_{i+1} \oplus \cdots \oplus \wt \Delta_{i+1} u_{i+1} \oplus 0 \oplus \cdots \oplus 0 )  \right. \\
			&  \qquad \left. : u_{i+1} \in \Gamma\otimes\mathcal{E}_{i+1} \right\}. \\
		\end{align*}
		Here
		\[
		\wt{Z}_k :
		\overline{\wt{\Delta} (\Gamma\otimes\mathcal{E})}
		\longrightarrow
		\overline{\wt{\Delta}_k (\Gamma\otimes\mathcal{E}_k)}
		\oplus \cdots \oplus
		\overline{\wt{\Delta}_1 (\Gamma\otimes\mathcal{E}_1)}
		\]
		is the isometry defined by
		\begin{equation*}
			\wt{Z}_k \wt{\Delta} v
			=
			\wt{\Delta}_k \wt{\Theta}_{k-1} \cdots \wt{\Theta}_1 v
			\oplus \cdots \oplus
			\wt{\Delta}_2 \wt{\Theta}_1 v
			\oplus
			\wt{\Delta}_1 v,
			\qquad
			v \in \Gamma\otimes\mathcal{E}.
		\end{equation*}	
		Conversely, if
		\[
		\wt{\Theta}
		=
		\wt{\Theta}_k\cdots\wt{\Theta}_1
		\]
		is a $k$-regular factorization and the subspaces 
		$\wt{\mathcal M}_i$ and $\wt{\mathcal N}_i$ are defined as above, 
		then each $\wt{\mathcal M}_i$ is joint invariant under $\wt{T}$, the collection $\{\wt{\mathcal M}_i\}_{i=1}^{k-1}$ forms an increasing chain
		\[
		\wt{\mathcal M}_1
		\sub\cdots\sub
		\wt{\mathcal M}_{k-1},
		\]
		and for every $i=1,\dots,k-1$ we have the orthogonal decomposition
		\[
		\wt{\mathcal H}
		=
		\wt{\mathcal M}_i
		\oplus
		\wt{\mathcal N}_i.
		\]
	\end{theorem}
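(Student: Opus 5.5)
The plan is to induct on $k$, with Theorem~\ref{2_r_invariant} (Popescu's case $k=2$) as the base case and Proposition~\ref{propo_k_regular_multi} as the glue. The first assertion, that $T$ is unitarily equivalent to $\wt T$ on $\wt{\mathcal H}$, is the standard Popescu functional model. To get it, I would identify $\mathcal K=\ov{M_V(\mathcal L)\vee M_V(\mathcal L_*)}$ (using \eqref{c.n.c. conditions}) with $(\Gamma\otimes\mathcal E_*)\oplus\ov{\wt\Delta(\Gamma\otimes\mathcal E)}$. Under this identification $M_V(\mathcal L)$ corresponds to $\wt{\mathcal G}$, so $\mathcal H=\mathcal K\ominus M_V(\mathcal L)$ becomes $\wt{\mathcal H}$. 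Since $T_j^*=V_j^*|_{\mathcal H}$, the adjoints $T_j^*$ become $(S_j^*\otimes I)\oplus\wt C_j^*$. This is exactly the model underlying Theorem~\ref{2_r_invariant}, and I take it from \cite{Po06}, transported through the unitaries implementing the coincidence.

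\emph{Direct part.} Given $\mathcal M_1\sub\cdots\sub\mathcal M_{k-1}$, I would first apply Theorem~\ref{2_r_invariant} to $\mathcal M_{k-1}$. This gives a $2$-regular factorization $\wt\Theta=\wt\Theta_k\Psi$ with $\Psi:\Gamma\otimes\mathcal E\to\Gamma\otimes\mathcal E_k$, and identifies $T|_{\mathcal M_{k-1}}$ with a model built from $\Psi$. The key fact I need, from \cite{Po06} or a short computation, is that in the $2$-regular setting the restriction $T|_{\mathcal M_{k-1}}$ is c.n.c. and its characteristic function coincides with the purely contractive part of $\Psi$. The subspaces $\mathcal M_1\sub\cdots\sub\mathcal M_{k-2}$ are joint invariant for $T|_{\mathcal M_{k-1}}$. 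By the induction hypothesis they therefore give a $(k-1)$-regular factorization of that characteristic function, and hence of $\Psi=\wt\Theta_{k-1}\cdots\wt\Theta_1$; the unitary-constant part of $\Psi$ is absorbed into $\wt\Theta_1$ and leaves the defects unchanged. Proposition~\ref{propo_k_regular_multi}(ii) then shows that $\wt\Theta=\wt\Theta_k\cdots\wt\Theta_1$ is $k$-regular.

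To obtain the stated form of $\wt{\mathcal M}_i$, I would use the decomposition \eqref{deco_relation_ZK}, namely $\wt Z_k=(I\oplus Z_{k-1})Z_2$. Feeding the model description of $\mathcal M_i\sub\mathcal M_{k-1}$ from the inductive step through $Z_2^*$ and then $Z_{k-1}^*$ converts Popescu's two-block description into the $k$-block formula with $\wt Z_k^*$. Alternatively, $\wt{\mathcal M}_i$ is simply the subspace attached by Theorem~\ref{2_r_invariant} to the $2$-regular factorization $(\wt\Theta_k\cdots\wt\Theta_{i+1})(\wt\Theta_i\cdots\wt\Theta_1)$, transported by \eqref{deco_relation_ZK}.

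\emph{Converse.} Assume $\wt\Theta=\wt\Theta_k\cdots\wt\Theta_1$ is $k$-regular. By Proposition~\ref{propo_k_regular_multi}(iv), each grouping $(\wt\Theta_k\cdots\wt\Theta_{i+1})(\wt\Theta_i\cdots\wt\Theta_1)$ is $2$-regular. By Proposition~\ref{partition_multi}, with partition $J_2=\{k,\dots,i+1\}$ and $J_1=\{i,\dots,1\}$, we have $\wt Z_k=(Z'\oplus Z'')Z_2^{(i)}$, where all three factors are unitary. Applying this unitary identification to Theorem~\ref{2_r_invariant} for that $2$-factorization turns its $\wh{\mathcal H}_1,\wh{\mathcal H}_2$ into $\wt{\mathcal M}_i,\wt{\mathcal N}_i$. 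This yields joint invariance and $\wt{\mathcal H}=\wt{\mathcal M}_i\oplus\wt{\mathcal N}_i$.

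Invariance can also be checked directly: by Lemma~\ref{cuntz_isometry}, $\wt T_j$ acts on $\wt{\mathcal M}_i$ via $(S_j\otimes I)\oplus\wt Z_k^*\diag(C_j^{(k)},\dots,C_j^{(1)})\wt Z_k$. This map sends a generator with parameters $u_{i+1},v_i,\dots,v_1$ to one with parameters $(S_j\otimes I)u_{i+1}$ and $C^{(\ell)}_jv_\ell$, since the $\wt\Theta$'s are multi-analytic. Compressing back modulo $\wt{\mathcal G}$ is harmless because $\wt{\mathcal G}$ is $\wt T$-co-invariant.

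The chain property $\wt{\mathcal M}_i\sub\wt{\mathcal M}_{i+1}$ holds because any generator of $\wt{\mathcal M}_i$ with parameter $u_{i+1}$ can be rewritten as a generator of $\wt{\mathcal M}_{i+1}$. One takes $u_{i+2}=\wt\Theta_{i+1}u_{i+1}$ and $v_{i+1}=\wt\Delta_{i+1}u_{i+1}$, and the first components match.

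\emph{Main obstacle.} I expect the hardest step to be the identification in the direct part: the restriction $T|_{\mathcal M_{k-1}}$ must have characteristic function coinciding with the purely contractive part of $\Psi$, and the model description of the nested subspaces must transport correctly through $Z_2$ and $Z_{k-1}$. I would handle this by working entirely inside the model $\wt{\mathcal H}$. There I would verify directly that the unitary $\wt Z_k$ carries the span generated by $\Theta_2$ and $\Delta_1$ in the $2$-regular grouping onto the $k$-block span above, which avoids computing characteristic functions of restrictions explicitly.
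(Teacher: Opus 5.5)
Your converse is sound. Proposition~\ref{partition_multi} splits $\wt Z_k=(Z'\oplus Z'')Z_2^{(i)}$ into unitaries, so you can transport Popescu's $\wh{\mathcal H}_1,\wh{\mathcal H}_2$ for the grouping $(\wt\Theta_k\cdots\wt\Theta_{i+1})(\wt\Theta_i\cdots\wt\Theta_1)$ onto $\wt{\mathcal M}_i,\wt{\mathcal N}_i$. Your direct check is shorter than the paper's orthogonality computation for $\wt{\mathcal N}_i$: the generating set $\wt{\mathcal G}_i\supseteq\wt{\mathcal G}$ is invariant under $(S_j\otimes I)\oplus\wt C_j$, because $\wt C_j\wt Z_k^*=\wt Z_k^*\,\mathrm{diag}(C_j^{(k)},\dots,C_j^{(1)})$ once $\wt Z_k$ is unitary. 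In the direct part, the $k$-regularity of the factorization you build via Proposition~\ref{propo_k_regular_multi}(ii) is also fine.

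The gap is the identification of $\mathcal M_i$ with $\wt{\mathcal M}_i$. Theorem~\ref{2_r_invariant} and your induction hypothesis only say that subspaces ``correspond'' under \emph{some} unitary equivalence. For $T'=T|_{\mathcal M_{k-1}}$, that is the unitary $\Phi'$ produced from the dilation of $T'$, composed with whatever unitary constants implement the coincidence of $\Theta_{T'}$ with the purely contractive part of $\Psi$. The embedding you need for pulling back is a different unitary: $f\oplus g\mapsto\wt\Theta_k f\oplus Z_2^*(\wt\Delta_k f\oplus g)$ from $\mathcal H(\Psi)$ onto the image of $\mathcal M_{k-1}$ in the ambient model. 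The two differ by a unitary $W$ commuting with the model operator $T_\Psi$. Unless you control $W$, you cannot conclude that $\mathcal M_i$ sits in $\mathcal H(\Psi)$ as $\{\wt\Theta_{k-1}\cdots\wt\Theta_{i+1}u\oplus Z_{k-1}^*(\cdots)\}\ominus\mathcal G_\Psi$. Hence feeding through $Z_2^*$ and $Z_{k-1}^*$ does not yield $\wt{\mathcal M}_i$.

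Your ``alternative'' has the same problem. That $\mathcal M_i$ is the subspace attached to the grouping at $i$ is exactly what must be proved. The remedy in your last paragraph, checking that $\wt Z_k$ carries the two-block span onto the $k$-block span, only proves the algebraic identity between the two descriptions, which is the easy part.

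You can close this in either of two ways:
\begin{enumerate}[(a)]
\item Run the whole induction, including the case $k=2$, inside the model with the identity identification. The statement is then that every chain of invariant subspaces of $T_{\wt\Theta}$ on $\mathcal H(\wt\Theta)$ \emph{equals} $\{\wt{\mathcal M}_i\}$ for some $k$-regular factorization.
\item Prove that every unitary commuting with a c.n.c.\ model $T_\Psi$ comes from unitary constants. By uniqueness of the minimal isometric dilation, such a unitary extends to one commuting with $(S_j\otimes I)\oplus C_j$ and its adjoints. Hence it acts as $I\otimes\tau_*$ on the first component and as $\Delta_\Psi u\mapsto\Delta_\Psi(I\otimes\tau)u$ on the second, where $\Psi(I\otimes\tau)=(I\otimes\tau_*)\Psi$. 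You then absorb $\tau,\tau_*$ into the outer factors of the factorization of $\Psi$.
\end{enumerate}

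The paper avoids the issue by building all factors at once from a single dilation. It uses the nested Wold decompositions $\mathcal K\ominus\mathcal N_i=M_V(\mathcal F_i)\oplus\mathcal R_i$ and the difference spaces $\mathcal W_i=\mathcal R_i\ominus\mathcal R_{i-1}$. Then $Z_k$ is assembled from the maps $\Phi_{\mathcal W_i}$, and every $\mathcal M_i$ is transported by the same unitary $\Phi$.
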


	%%%%%%%%%%%%%%%%%%%%%%%%%%%%%%%%%%%%%%%%%%%%%%%%%%%%%%%%%%%%%%%%%%%%%%%
	
	\begin{proof}
		Let $\mathcal M_{1} \sub \cdots \sub \mathcal M_{k-1}$ be joint invariant subspaces for a c.n.c. row contraction $T = [T_1, \dots, T_n]$ acting on $\mathcal{H}$. Assume that $V = [V_1, \dots, V_n]$ is the minimal isometric dilation of $T$ on the Hilbert space $\mathcal{K}$.
		By Popescu's Wold decomposition (see \cite{Po89a}), one has
		\begin{equation*}
			\mathcal{K} = \mathcal{R} \oplus M_V(\mathcal{L}_*),
		\end{equation*}
		where $\mathcal{R} = \bigcap_{m=0}^{\infty} \bigoplus_{|\alpha|=m} V_\alpha \mathcal{K}$ is the maximal reducing subspace for the operators $V_1, \dots, V_n$ such that $V|_{\mathcal{R}}$ is a Cuntz row isometry, and $\mathcal{L}_* = \overline{(I_{\mathcal{H}} - \sum_{j=1}^{n} V_j T_j^*) \mathcal{H}}.$
		
		For $i = 1, \dots, k-1$, define $\mathcal{N}_i \coloneqq \mathcal{H} \ominus \mathcal M_i$. Clearly, each $\mathcal{N}_i$ is invariant under $T_j^*$ for $j = 1, \dots, n$. Consequently, each $\mathcal{N}_i$ is also invariant under $V_j^*$, since $T_j^* = V_j^*|_{\mathcal{H}}$. This implies that the orthogonal complements $\mathcal{K}_i \coloneqq \mathcal{K} \ominus \mathcal{N}_i$ are invariant under $V_j$. Applying  Popescu’s Wold decomposition to the restricted row isometry $[V_1|_{\mathcal{K}_i}, \dots, V_n|_{\mathcal{K}_i}]$, we obtain
		\begin{equation*}
			\mathcal{K}_i = M_V(\mathcal{F}_i) \oplus \mathcal{R}_i,
		\end{equation*}
		where $$\mathcal{R}_i = \bigcap_{m=0}^{\infty} \bigoplus_{|\alpha|=m} V_\alpha \mathcal{K}_i, ~~\mathcal{F}_i = \mathcal{K}_i \ominus \left( \bigoplus_{j=1}^n V_j \mathcal{K}_i \right).$$ Moreover, $[V_1|_{M_V(\mathcal{F}_i)}, \dots, V_n|_{M_V(\mathcal{F}_i)}]$ is a multi-shift with wondering subspace $\mathcal F_i$, and $\mathcal{R}_i$ is the maximal subspace of $\mathcal{K}_i$ reducing the operators $V_1, \dots, V_n$ on which the row isometry acts as a Cuntz row isometry.
		
		Observe that the inclusions $\mathcal M_1 \sub \cdots \sub \mathcal M_{k-1}$ naturally imply $\mathcal{N}_1 \supseteq \mathcal{N}_2 \supseteq \cdots \supseteq \mathcal{N}_{k-1}$, which therefore implies $\mathcal{K}_1 \sub \mathcal{K}_2 \sub \cdots \sub \mathcal{K}_{k-1}$. Therefore,
		\begin{equation}\label{k_subset k_{i+1}}
			M_V(\mathcal{F}_i) \oplus \mathcal{R}_i \sub M_V(\mathcal{F}_{i+1}) \oplus \mathcal{R}_{i+1} \quad \text{for } i = 1, \dots, k-2.
		\end{equation}
		We claim that  $\mathcal{R}_i \sub \mathcal{R}_{i+1}$ because $\mathcal{K}_i \sub \mathcal{K}_{i+1}$ and  the subspaces $\mathcal{R}_i$ and $\mathcal{R}_{i+1}$ are the maximal reducing subspaces on which the respective restricted isometries act as Cuntz row isometries, the maximality of $\mathcal{R}_{i+1}$ enforces this precise inclusion. Hence, one obtains the chain
		\begin{equation*}
			\mathcal{R}_1 \sub \mathcal{R}_2 \sub \cdots \sub \mathcal{R}_{k-1}.
		\end{equation*}
		The orthogonal difference spaces are defined by $\mathcal{W}_1 \coloneqq \mathcal{R}_1$, $\mathcal{W}_i \coloneqq \mathcal{R}_i \ominus \mathcal{R}_{i-1}$ for $i = 2, \dots, k-1$, and $\mathcal{W}_k \coloneqq \mathcal{R} \ominus \mathcal{R}_{k-1}$. It follows immediately that
		\begin{equation*}
			\mathcal{R} = \mathcal{W}_k \oplus \cdots \oplus \mathcal{W}_1.
		\end{equation*}
		Consequently, an application of equation~\eqref{k_subset k_{i+1}} yields
		\begin{equation}\label{m(f_i)}
			M_V(\mathcal{F}_i) \sub M_V(\mathcal{F}_{i+1}) \oplus \mathcal{W}_{i+1} \quad \text{for } i = 1, \dots, k-2.
		\end{equation}
		Recall the fundamental geometric orthogonal decompositions of $\mathcal{K}$
		\begin{equation*}
			\mathcal{K} = \mathcal{H} \oplus M_V(\mathcal{L}) = M_V(\mathcal{L}_*) \oplus \mathcal{R}.
		\end{equation*}
		Using the first decomposition alongside $\mathcal{K}_i = M_V(\mathcal{F}_i) \oplus \mathcal{R}_i$, we obtain
		\begin{align}
			[\mathcal{H} \oplus M_V(\mathcal{L})] \ominus [\mathcal{H} \ominus \mathcal M_i] = M_V(\mathcal{F}_i) \oplus \mathcal{R}_i.\\
			\mathcal M_i \oplus M_V(\mathcal{L}) = M_V(\mathcal{F}_i) \oplus \mathcal{R}_i.\label{M_i}
		\end{align}
		Which specifically yields
		\begin{equation}\label{m(l)}
			M_V(\mathcal{L}) \sub M_V(\mathcal{F}_i) \oplus \mathcal{R}_i~~\text{ for} ~~i=1,\dots, k-1.
		\end{equation}
		%%%%%%%%%%%%%%%%%%%%%%%%%%%%%%%%%%%%%%%%%%%%%%%%
		Using the second geometric decomposition of $\mathcal{K}$, we similarly get
		\begin{equation*}
			[M_V(\mathcal{L}_*) \oplus \mathcal{R}] \ominus [\mathcal{H} \ominus \mathcal M_{i}] = M_V(\mathcal{F}_{i}) \oplus \mathcal{R}_{i}.
		\end{equation*}
		This equivalence yields $M_V(\mathcal{F}_{i}) = [M_V(\mathcal{L}_*) \oplus (\mathcal{R} \ominus \mathcal{R}_{i})] \ominus [\mathcal{H} \ominus \mathcal M_{i}]$, from which we conclude
		\begin{equation}\label{m{f_{k-1}}}
			M_V(\mathcal{F}_{i}) \sub M_V(\mathcal{L}_*) \oplus (\mathcal{R} \ominus \mathcal{R}_{i}) ~~\text{ for} ~~i=1,\dots, k-1.
		\end{equation}
		From equations \eqref{m(l)},\eqref{m(f_i)} and \eqref{m{f_{k-1}}}, the following chain of inclusions holds
		\begin{align}
			M_V(\mathcal{L})& \sub M_V(\mathcal{F}_1) \oplus \mathcal{W}_1\label{L}\\
			M_V(\mathcal{F}_i)& \sub M_V(\mathcal{F}_{i+1}) \oplus \mathcal{W}_{i+1} \quad \text{for } i = 1, \dots, k-2 \label{f_i}\\ 
			M_V(\mathcal{F}_{k-1}) &\sub M_V(\mathcal{L}_*) \oplus \mathcal{W}_k. \label{f_k-1}
		\end{align}
		
		Let $P^{\mathcal{L}}, P^{\mathcal{L}_*}, P^{\mathcal{F}_i}$ (for $i = 1, \dots, k-1$), $P_{\mathcal{R}}$, and $P_{\mathcal{W}_i}$ (for $i = 1, \dots, k$) denote the orthogonal projections from $\mathcal{K}$ onto $M_V(\mathcal{L}), M_V(\mathcal{L}_*), M_V(\mathcal{F}_i), \mathcal{R},$ and $\mathcal{W}_i$, respectively.
		For an arbitrary vector $l \in M_V(\mathcal{L})$, using equation \eqref{L}, we can write $l = P^{\mathcal{F}_1}l \oplus P_{\mathcal{W}_1}l$. Iterating this orthogonal decomposition through the chain of inclusions given in equation \eqref{f_i} and \eqref{f_k-1}, one deduces
		\begin{align*}
			l &= P^{\mathcal{F}_2} P^{\mathcal{F}_1} l \oplus P_{\mathcal{W}_2} P^{\mathcal{F}_1} l \oplus P_{\mathcal{W}_1} l \nonumber \\
			&= P^{\mathcal{F}_3} P^{\mathcal{F}_2} P^{\mathcal{F}_1} l \oplus P_{\mathcal{W}_3} P^{\mathcal{F}_2} P^{\mathcal{F}_1} l \oplus P_{\mathcal{W}_2} P^{\mathcal{F}_1} l \oplus P_{\mathcal{W}_1} l \nonumber \\
			&\;\;\vdots \nonumber \\
			&= P^{\mathcal{L}_*} P^{\mathcal{F}_{k-1}} \cdots P^{\mathcal{F}_1} l \oplus P_{\mathcal{W}_k} P^{\mathcal{F}_{k-1}} \cdots P^{\mathcal{F}_1} l \oplus \cdots \oplus P_{\mathcal{W}_1} l.
		\end{align*}
		Consequently, the following explicit expressions for the projections onto $M_V(\mathcal{L}_*)$ and $\mathcal{R}$  restricted to $M_V(\mathcal{L})$ are obtained
		\begin{align}
			P^{\mathcal{L}_*} l &= P^{\mathcal{L}_*} P^{\mathcal{F}_{k-1}} \cdots P^{\mathcal{F}_1} l,\label{proj:L_*} \\
			P_{\mathcal{R}} l &= P_{\mathcal{W}_k} P^{\mathcal{F}_{k-1}} \cdots P^{\mathcal{F}_1} l \oplus \cdots \oplus P_{\mathcal{W}_2} P^{\mathcal{F}_1} l \oplus P_{\mathcal{W}_1} l.\label{proj:R}
		\end{align}
		As a direct consequence of \eqref{orthogonal_decomposition_K} and \eqref{L}–\eqref{f_k-1}, we have
		\begin{align}
			P_{\mathcal R} x &= (I - P^{\mathcal{L}_*}) x 
			&& \text{for } x \in \mathcal{K}, 
			\label{eq:PR} \\
			P_{\mathcal{W}_1} l &= (I - P^{\mathcal{F}_1}) l 
			&& \text{for } l \in M_V(\mathcal{L}), 
			\label{eq:PW1} \\
			P_{\mathcal{W}_{i+1}} f_i &= (I - P^{\mathcal{F}_{i+1}}) f_i 
			&& \text{for } f_i \in M_V(\mathcal{F}_i), \ (i = 1, \dots, k-2), 
			\label{eq:PWi} \\
			P_{\mathcal{W}_k} f_{k-1} &= (I - P^{\mathcal{L}_*}) f_{k-1} 
			&& \text{for } f_{k-1} \in M_V(\mathcal{F}_{k-1}).
			\label{eq:PWk}
		\end{align}
		Since $T$ is a c.n.c. row contraction, it follows from \eqref{c.n.c. conditions} that
		\begin{equation*}
			\overline{P_{\mathcal R} M_V(\mathcal L)} = \mathcal R.\label{P_RM(L)}
		\end{equation*}
		Combining this with \eqref{proj:R}, we obtain
		\begin{equation*}
			\overline{P_{\mathcal{W}_k} P^{\mathcal{F}_{k-1}} \cdots P^{\mathcal{F}_1} M_V(\mathcal{L})} \oplus \cdots \oplus \overline{P_{\mathcal{W}_1} M_V(\mathcal{L})} = \mathcal{W}_k \oplus \cdots \oplus \mathcal{W}_1.
		\end{equation*}
		Consequently, by matching the corresponding direct summands
		\begin{equation*}
			\overline{P_{\mathcal{W}_1} M_V(\mathcal{L})} = \mathcal{W}_1.\label{P_{W_1}M(L)}
		\end{equation*}
		From the inclusion 
		$M_V(\mathcal L) \sub M_V(\mathcal F_1) \oplus \mathcal W_1,$
		we obtain
		$\overline{P_{\mathcal W_2} P^{\mathcal F_1} M_V(\mathcal L)}
		\sub
		\overline{P_{\mathcal W_2} M_V(\mathcal F_1)}
		\sub
		\mathcal W_2.$
		Consequently,
		\begin{equation*}
			\overline{P_{\mathcal W_2} M_V(\mathcal F_1)} = \mathcal W_2.\label{P_{W_2}M(F_1)}
		\end{equation*}
		By an inductive reasoning analogous to the above, we completely determine that
		\begin{align}
			\overline{P_{\mathcal{W}_{i+1}} M_V(\mathcal{F}_i)} &= \mathcal{W}_{i+1} \quad \text{for } i = 1, \dots, k-2,  \label{P_{W_i+1}M(F_i)}\\
			\overline{P_{\mathcal{W}_k} M_V(\mathcal{F}_{k-1})} &= \mathcal{W}_k.\label{P_{W_k}M(F_k-1)}
		\end{align}
		Now, consider the following contractions
		\begin{align*}
			Q &\coloneqq P^{\mathcal{L}_*}|_{M_V(\mathcal{L})} : M_V(\mathcal{L}) \to M_V(\mathcal{L}_*), \\
			Q_1 &\coloneqq P^{\mathcal{F}_1}|_{M_V(\mathcal{L})} : M_V(\mathcal{L}) \to M_V(\mathcal{F}_1), \\
			Q_{i+1} &\coloneqq P^{\mathcal{F}_{i+1}}|_{M_V(\mathcal{F}_i)} : M_V(\mathcal{F}_i) \to M_V(\mathcal{F}_{i+1}) \quad \text{for } i = 1, \dots, k-2, \\
			Q_k &\coloneqq P^{\mathcal{L}_*}|_{M_V(\mathcal{F}_{k-1})} : M_V(\mathcal{F}_{k-1}) \to M_V(\mathcal{L}_*).
		\end{align*}
		Since $M_V(\mathcal{L})$, $M_V(\mathcal{L}_*)$, and $M_V(\mathcal{F}_i)$ are reducing subspaces for the operators $V_1, \dots, V_n$, the following intertwining relations for each $j = 1, \dots, n$
		\begin{align}
			Q (V_j|_{M_V(\mathcal{L})}) &= (V_j|_{M_V(\mathcal{L}_*)}) Q,\label{eq:q} \\
			Q_1 (V_j|_{M_V(\mathcal{L})}) &= (V_j|_{M_V(\mathcal{F}_1)}) Q_1,\label{eq:q1} \\
			Q_{i+1} (V_j|_{M_V(\mathcal{F}_i)}) &= (V_j|_{M_V(\mathcal{F}_{i+1})}) Q_{i+1},\label{eq:q_i+1} \\
			Q_k (V_j|_{M_V(\mathcal{F}_{k-1})}) &= (V_j|_{M_V(\mathcal{L}_*)}) Q_k.\label{eq:q_k}
		\end{align}
		
		Let 
		$\Phi^{\mathcal L} : M_V(\mathcal L) \longrightarrow \Gamma \otimes \mathcal L$
		denote the canonical Fourier unitary representation defined by
		\begin{equation}\label{intertwining_phi }
			\Phi^{\mathcal L}\!\left( \sum_{\alpha \in \mathbb F_n^+} V_\alpha \ell_\alpha \right)
			\coloneqq
			\sum_{\alpha \in \mathbb F_n^+} e_\alpha \otimes \ell_\alpha,
		\end{equation}
		for $\{\ell_\alpha\}_{\alpha \in \mathbb F_n^+} \sub \mathcal L$ satisfying 
		\[
		\sum_{\alpha \in \mathbb F_n^+} \|\ell_\alpha\|^2 < \infty.
		\]
		This unitary operator intertwines the dilation $V$ with the standard multi-shift 
		$S = [S_1, \dots, S_n]$ in the sense that
		\begin{equation*}
			\Phi^{\mathcal L} V_i
			=
			(S_i \otimes I_{\mathcal L}) \Phi^{\mathcal L},
			\quad i = 1, \dots, n.
		\end{equation*}
		Similarly, one defines the Fourier representations 
		$\Phi^{\mathcal L_*}$ and $\Phi^{\mathcal F_i}$ associated with 
		$M_V(\mathcal L_*)$ and $M_V(\mathcal F_i)$, respectively.
		In view of the intertwining relations established above \eqref{eq:q} to \eqref{intertwining_phi }, the following operators are well defined, contractive, and multi-analytic
		\begin{align}
			\Theta_{\mathcal L} &\coloneqq \Phi^{\mathcal{L}_*} Q (\Phi^{\mathcal{L}})^*,\label{Theta_L2} \\
			\Theta_1 &\coloneqq \Phi^{\mathcal{F}_1} Q_1 (\Phi^{\mathcal{L}})^*, \\
			\Theta_{i+1} &\coloneqq \Phi^{\mathcal{F}_{i+1}} Q_{i+1} (\Phi^{\mathcal{F}_i})^*, \\
			\Theta_k &\coloneqq \Phi^{\mathcal{L}_*} Q_k (\Phi^{\mathcal{F}_{k-1}})^*.
		\end{align}
		Applying the Fourier representations $\Phi^{\mathcal L_*}$ to the projection identity \eqref{proj:L_*}, we obtain 
		\begin{align*}
			\Phi^{\mathcal{L}_*} P^{\mathcal{L}_*} l &= \Phi^{\mathcal{L}_*} P^{\mathcal{L}_*} (\Phi^{\mathcal{F}_{k-1}})^* \Phi^{\mathcal{F}_{k-1}} P^{\mathcal{F}_{k-1}} \dots (\Phi^{\mathcal{F}_1})^* \Phi^{\mathcal{F}_1} P^{\mathcal{F}_1} l \\
			\Theta_{\mathcal L} \Phi^{\mathcal L}l&= \Theta_k \Theta_{k-1} \dots \Theta_1 \Phi^\mathcal{L} l.
		\end{align*}
		Hence we have the following factorization 
		\begin{equation}\label{theta_l_factorization2}
			\Theta_{\mathcal L} = \Theta_k \Theta_{k-1} \dots \Theta_1  .
		\end{equation}
		Utilizing  relation given in \eqref{P_RM(L)}, we define the operator $\Phi_{\mathcal{R}} : \mathcal{R} \to \overline{\Delta_{\mathcal{L}} (\Gamma \otimes \mathcal{L})}$ by
		\begin{equation*}
			\Phi_{\mathcal{R}}(P_{\mathcal{R}} l) \coloneqq \Delta_{\mathcal{L}} \Phi^{\mathcal{L}} l,
		\end{equation*}
		where the defect operator  $\Delta_{\mathcal{L}} = (I - \Theta^*_{\mathcal L} \Theta_{\mathcal L})^{1/2}$. To prove that $\Phi_{\mathcal{R}}$ is  unitary, it suffices to show that it is an isometry on a dense subset. Let $l \in M_V(\mathcal{L})$. Invoking \eqref{eq:PR}, we compute as follows
		\begin{align*}
			\|P_{\mathcal{R}} l\|^2 &= \|l\|^2 - \|P^{\mathcal{L}_*} l\|^2 \nonumber \\
			&= \|\Phi^{\mathcal{L}} l\|^2 - \|\Phi^{\mathcal{L}_*} P^{\mathcal{L}_*} l\|^2 \nonumber \\
			&= \|\Phi^{\mathcal{L}} l\|^2 - \|\Theta_{\mathcal L} \Phi^{\mathcal{L}} l\|^2 \nonumber \\
			&= \|\Delta_{\mathcal{L}} \Phi^{\mathcal{L}} l\|^2.
		\end{align*}
	Analogous arguments yield unitary maps associated with the spaces $\mathcal{W}_i$
		\begin{align*}
			\Phi_{\mathcal{W}_1} &: \mathcal{W}_1 \to \overline{\Delta_1 (\Gamma \otimes \mathcal{L})}, ~ \Phi_{\mathcal{W}_1}(P_{\mathcal{W}_1} l) = \Delta_1 \Phi^{\mathcal{L}} l, \\
			\Phi_{\mathcal{W}_{i+1}} &: \mathcal{W}_{i+1} \to \overline{\Delta_{i+1} (\Gamma \otimes \mathcal{F}_i)}, ~ \Phi_{\mathcal{W}_{i+1}}(P_{\mathcal{W}_{i+1}} f_i) = \Delta_{i+1} \Phi^{\mathcal{F}_i} f_i ~ \text{for } i = 1, \dots, k-2, \\
			\Phi_{\mathcal{W}_k} &: \mathcal{W}_k \to \overline{\Delta_k (\Gamma \otimes \mathcal{F}_{k-1})}, ~ \Phi_{\mathcal{W}_k}(P_{\mathcal{W}_k} f_{k-1}) = \Delta_k \Phi^{\mathcal{F}_{k-1}} f_{k-1},
		\end{align*}
		where $\Delta_i = (I - \Theta_i^* \Theta_i)^{1/2}$ for $i = 1, \dots, k$.
		Since the maximal reducing subspace admits the orthogonal decomposition $\mathcal{R} = \mathcal{W}_k \oplus \cdots \oplus \mathcal{W}_1$, we define the operator $Z_k$ by
		\begin{equation*}
			Z_k \coloneqq \left( \Phi_{\mathcal{W}_k} \oplus \cdots \oplus \Phi_{\mathcal{W}_1} \right) \Phi_{\mathcal{R}}^{-1}.
		\end{equation*}
		By construction, $Z_k$ is a unitary map from $\overline{\Delta_{\mathcal{L}}(\Gamma \otimes \mathcal{L})}$ onto the orthogonal direct sum $\overline{\Delta_k(\Gamma \otimes \mathcal{F}_{k-1})} \oplus \cdots \oplus \overline{\Delta_1(\Gamma \otimes \mathcal{L})}$. 
		We compute the explicit action of $Z_k$ on the dense set of elements of the form $\Delta_{\mathcal{L}} \Phi^{\mathcal{L}} l$, using the expresion of $P_{\mathcal R}l$  given in \eqref{proj:R} for $l\in M_v(\mathcal L)$
		\begin{align*}
			Z_k(\Delta_{\mathcal{L}} \Phi^{\mathcal{L}} l) &= \left( \Phi_{\mathcal{W}_k} \oplus \cdots \oplus \Phi_{\mathcal{W}_1} \right) P_{\mathcal{R}} l \nonumber \\
			&= \Phi_{\mathcal{W}_k} P_{\mathcal{W}_k} P^{\mathcal{F}_{k-1}} \cdots P^{\mathcal{F}_1} l \oplus \cdots \oplus \Phi_{\mathcal{W}_2} P_{\mathcal{W}_2} P^{\mathcal{F}_1} l \oplus \Phi_{\mathcal{W}_1} P_{\mathcal{W}_1} l \nonumber \\
			&= \Delta_k \Theta_{k-1} \cdots \Theta_1 \Phi^{\mathcal{L}} l \oplus \cdots \oplus \Delta_2 \Theta_1 \Phi^{\mathcal{L}} l \oplus \Delta_1 \Phi^{\mathcal{L}} l.
		\end{align*}
		For $v\in \Gamma \otimes \mathcal{L} $, this yields
		\begin{equation*}
			Z_k \Delta_{\mathcal{L}} v = \Delta_k \Theta_{k-1} \cdots \Theta_1 v \oplus \cdots \oplus \Delta_2 \Theta_1 v \oplus \Delta_1 v.
		\end{equation*}
		Since $Z_k$ is a unitary operator, we immediately obtain
		\begin{equation*}
			\overline{Z_k(\Delta_{\mathcal{L}}(\Gamma \otimes \mathcal{L}))} = \overline{\Delta_k(\Gamma \otimes \mathcal{F}_{k-1})} \oplus \cdots \oplus \overline{\Delta_2(\Gamma \otimes \mathcal{F}_1)} \oplus \overline{\Delta_1(\Gamma \otimes \mathcal{L})}.
		\end{equation*}
		Observe that the Fourier representation map $\Phi$, defined by
		\begin{equation*}
			\Phi \coloneqq \Phi^{\mathcal{L}_*} \oplus \Phi_{\mathcal{R}} : \mathcal{K} \longrightarrow \wh{\mathcal{K}} \coloneqq (\Gamma \otimes \mathcal{L}_*) \oplus \overline{\Delta_{\mathcal{L}}(\Gamma \otimes \mathcal{L})},
		\end{equation*}
		is a unitary operator from the minimal isometric dilation space onto the functional model space. Consequently, by substituting our expression for the residual part, we can rewrite $\Phi$ as
		\begin{equation}\label{Phi_map}
			\Phi = \Phi^{\mathcal{L}_*} \oplus Z_k^{*} \left( \Phi_{\mathcal{W}_k} \oplus \cdots \oplus \Phi_{\mathcal{W}_1} \right).
		\end{equation}	
		Finally, we determine the image of the Hilbert space $\mathcal{H}$, which sits inside $\mathcal{K}$ as the orthogonal complement $\mathcal{H} = \mathcal{K} \ominus M_V(\mathcal{L})$. Under the unitary operator $\Phi$, the functional model for $\mathcal{H}$ takes the precise form
		\begin{equation}\label{model_space_H}
			\Phi(\mathcal{H}) = \wh{\mathcal{H}} = \left[ (\Gamma \otimes \mathcal{L}_*) \oplus \overline{\Delta_{\mathcal{L}}(\Gamma \otimes \mathcal{L})} \right] \ominus \left\{ \Theta u \oplus \Delta_{\mathcal{L}} u : u \in \Gamma \otimes \mathcal{L} \right\}.
		\end{equation}
		Under the unitary transformation $\Phi$, the row contraction $T = [T_1, \ldots, T_n]$ is carried to the row contraction $\wh{T} = [\wh{T}_1, \ldots, \wh{T}_n]$. For each $j = 1, \ldots, n$, the adjoint operator $\wh{T}_j^*$ acts according to
		\begin{equation*}
			\wh{T}_j^* (l_* \oplus \Delta_{\mathcal L} l) 
			= (S_j^* \otimes I_{\mathcal L_*}) l_* \oplus C_j^* (\Delta_{\mathcal L} l), 
			\quad l \in \gt \mathcal L, \; l_* \in \gt \mathcal L_*,
		\end{equation*}
		where the operator 
		\[
		C_j : \ov{\Delta_{\mathcal L}(\gt \mathcal L)} \longrightarrow \ov{\Delta_{\mathcal L}(\gt \mathcal L)}
		\]
		is defined by
		\begin{equation*}
			C_j (\Delta_{\mathcal L} l) 
			\coloneqq \Delta_{\mathcal L} (S_j \otimes I_{\mathcal L}) l, 
			\quad l \in \gt \mathcal L.
		\end{equation*}
		
	%%%%%%%%%%%%%%%%%%%%%%%%%%%%%%%%%%%%%%%%%%%%%%%%%%%%%%%%	
		The images of the joint invariant subspaces $\mathcal{M}_i$ under the unitary map $\Phi$ are now determined. Recall from \eqref{M_i} that
		\begin{equation}
			\mathcal{M}_i = (M_V(\mathcal{F}_i) \oplus \mathcal{R}_i) \ominus M_V(\mathcal{L}) \quad \text{for } i=1, \dots, k-1.
		\end{equation}
		Applying the Fourier representation $\Phi$, which preserves orthogonal direct sums and orthogonal complements, yields
		\begin{equation}\label{eq:phi_Mi_decomp}
			\Phi(\mathcal M_i) = \Phi(M_V(\mathcal{F}_i) \oplus \mathcal R_i) \ominus \Phi(M_V(\mathcal{L})).
		\end{equation}
		To determine $\Phi(M_V(\mathcal{F}_i))$, consider an arbitrary vector
		$f_i \in M_V(\mathcal{F}_i)$.
	Iterative application of the relations
		\eqref{L}--\eqref{f_k-1}
		gives
		\begin{align*}
			\Phi(M_V(\mathcal F_i)) 
			&= \Phi \Big\{ 
			P^{\mathcal{L}_*} P^{\mathcal F_{k-1}} \cdots P^{\mathcal F_{i+1}} f_i
			\oplus 
			P_{\mathcal W_k} P^{\mathcal F_{k-1}} \cdots P^{\mathcal F_{i+1}} f_i  \\
			&\qquad\qquad 
			\oplus \cdots 
			\oplus 
			P_{\mathcal W_{i+1}} f_i 
			:\,
			f_i \in M_V(\mathcal F_i) 
			\Big\}  \\
			&= \Big\{ 
			\Theta_k \Theta_{k-1} \cdots \Theta_{i+1} u_i  \\
			&\qquad \oplus 
			Z_k^{*}\Big(
			\Delta_k \Theta_{k-1} \cdots \Theta_{i+1} u_i
			\oplus \cdots 
			\oplus 
			\Delta_{i+1} u_i  \\
			&\qquad\qquad
			\oplus 
			\underbrace{0 \oplus \cdots \oplus 0}_{i\text{ terms}}
			\Big)
			:\,
			u_i \in \Gamma \otimes \mathcal F_i
			\Big\}.
		\end{align*}
		where $u_i = \Phi^{\mathcal{F}_i} f_i \in \Gamma \otimes \mathcal{F}_i$.
		Next, consider the representation of the residual space $\mathcal R_i$. Since, 
		$$\mathcal R_i = \mathcal W_i \oplus \cdots \oplus \mathcal W_1,$$
		 it follows that
		\begin{align*}
			\Phi(\mathcal R_i) &= \Phi(\mathcal W_i \oplus \cdots \oplus \mathcal W_1) \\
			&= Z_k^{*} \left( \underbrace{\{0\} \oplus \cdots \oplus \{0\}}_{k-i \text{ times}} \oplus \ov{\Delta_i (\Gamma \otimes \mathcal{F}_{i-1})} \oplus \cdots \oplus \ov{\Delta_2 (\Gamma \otimes \mathcal{F}_1)} \oplus \ov{\Delta_1 (\Gamma \otimes \mathcal{L})} \right).
		\end{align*}
		
		Combining these two representation, we get
		\begin{multline*}
			\Phi(M_V(\mathcal{F}_i) \oplus \mathcal R_i) = \left\{ \Theta_k \cdots \Theta_{i+1} u_i \oplus Z_k^{*} (\Delta_k \Theta_{k-1} \cdots \Theta_{i+1} u_i \oplus \cdots \oplus \Delta_{i+1} u_i \oplus v_i \oplus \cdots \right. \\ \oplus v_1)  : 
			\left. u_i \in \Gamma \otimes \mathcal{F}_i, \ v_j \in \ov{\im(\Delta_j)} \text{ for } j=1,\dots,i \right\}.
		\end{multline*}
		Finally, substituting this combined explicit form back into our orthogonal decomposition \eqref{eq:phi_Mi_decomp} alongside the known representation $\Phi(M_V(\mathcal{L})) = \{ \Theta u \oplus \Delta_{\mathcal{L}} u : u \in \Gamma \otimes \mathcal{L} \}$, we arrive at the complete expression for $\wh{\mathcal{M}}_i \coloneqq \Phi(\mathcal M_i)$
		\begin{multline}\label{eq:phi_Mi}
			\wh{\mathcal{M}}_i  = \left\{ \Theta_k \cdots \Theta_{i+1} u_i \oplus Z_k^{*} (\Delta_k \Theta_{k-1} \cdots \Theta_{i+1} u_i \oplus \cdots \oplus \Delta_{i+1} u_i \oplus v_i \oplus \cdots \oplus v_1) : \right. \\
			\left. u_i \in \Gamma \otimes \mathcal{F}_i, \ v_j \in \ov{\im( \Delta_j)} \text{ for } j=1,\dots,i \right\} \ominus \left\{ \Theta u \oplus \Delta_{\mathcal{L}} u : u \in \Gamma \otimes \mathcal{L} \right\}.
		\end{multline}
		Moreover, the orthogonal complement $\wh{\mathcal N}_i$ is computed as follows
		\begin{align*}
			\wh{\mathcal{N}}_i &=\wh{\mathcal H} \ominus \wh{\mathcal M}_i\\
			&= \left[ \left( \Gamma \otimes \mathcal{L}_* \oplus \ov{\Delta_{\mathcal{L}} (\Gamma \otimes \mathcal{L})} \right) \ominus \{ \Theta u \oplus \Delta_{\mathcal{L}} u : u \in \Gamma \otimes \mathcal{L} \} \right] \\
			&\quad \ominus \Bigg[ \Big\{ \Theta_k \cdots \Theta_{i+1} u_i \oplus Z_k^{*}(\Delta_k \cdots \Theta_{i+1} u_i \oplus \cdots \oplus \Delta_{i+1} u_i \oplus v_i \oplus \cdots \oplus v_1) \\
			&\qquad \qquad : u_i \in \Gamma \otimes \mathcal{F}_i, \ v_j \in \ov{\im(\Delta_j)} \text{ for } j=1,\dots,i \Big\} \ominus \{ \Theta u \oplus \Delta_{\mathcal{L}} u : u \in \Gamma \otimes \mathcal{L} \} \Bigg] \\
			&= \left[ (\Gamma \otimes \mathcal{L}_*) \oplus Z_k^{*} \left( \ov{\Delta_k (\Gamma \otimes \mathcal{F}_{k-1})} \oplus \cdots \oplus \ov{\Delta_{i+1} (\Gamma \otimes \mathcal{F}_i)} \oplus \underbrace{\{0\} \oplus \cdots \oplus \{0\}}_{i \text{ times}} \right) \right] \\
			&\quad \ominus \left\{ \Theta_k \cdots \Theta_{i+1} u_i \oplus Z_k^{*}(\Delta_k \cdots \Theta_{i+1} u_i \oplus \cdots \oplus \Delta_{i+1} u_i \oplus \underbrace{0 \oplus \cdots \oplus 0}_{i \text{ times}}) : u_i \in \Gamma \otimes \mathcal{F}_i \right\}.
		\end{align*}
		Recall from  \cite{Po89a} that G. Popescu proved that $\Theta_{\mathcal L}$ (defined in \ref{Theta_L2}) coincides with the characteristic function  $\Theta_T$ of the row contraction $T$. Hence, $\wt{\Theta}$ coincides with $\Theta_{\mathcal L}$. Consequently, the $k$-regular factorization given by \eqref{theta_l_factorization2} yields the corresponding $k$-regular factorization
		\begin{equation}
			\wt {\Theta} = \wt {\Theta}_k\cdots \wt {\Theta}_1,
		\end{equation}
		where each \( \wt {\Theta}_i(z):\Gamma\otimes \mathcal{E}_i\to \Gamma\otimes \mathcal{E}_{i+1} \) is a contractive multi-analytic operator for \( i=1,\dots,k \), with initial space \( \mathcal{E}_1 = \mathcal{E} \) and final space \( \mathcal{E}_{k+1} = \mathcal{E}_* \). Moreover, the functional model pair \( (\wh{T}, \wh{\mathcal H}) \) corresponds to the functional model pair \( (\wt{T}, \wt{\mathcal H}) \), and for \( i=1,\dots,k-1 \), the subspaces \( \wh{\mathcal M}_i \) and \( \wh{\mathcal N}_i \) correspond to \( \wt{\mathcal M}_i \) and \( \wt{\mathcal N}_i \), respectively.
		
		%%%%%%%%%%%%%%%%%%%%%%%%%%%%%%%%%%%%%%%%%%%%%%%%%%%

		Conversely, let $\wt{\Theta} = \wt{\Theta}_k \cdots \wt{\Theta}_1$ be a $k$-regular factorization, where each \( \wt{\Theta}_i:\Gamma\otimes\mathcal{E}_i\to \Gamma\otimes \mathcal{E}_{i+1}  \) is a contractive multi- analytic operator for \( i=1,\dots,k \), with initial space \( \mathcal{E}_1 = \mathcal{E} \) and final space \( \mathcal{E}_{k+1} = \mathcal{E}_* \). For each \( i = 1, \dots, k-1 \), let the subspaces \( \wt{\mathcal M}_i \) and \( \wt{\mathcal N} _i\) be defined by
		\begin{align*}
			\wt{\mathcal{M}}_{i} 
			&= \Big\{ 
			{\wt{\Theta}}_k \cdots {\wt{\Theta}}_{i+1} u_{i+1} 
			\oplus 
			{Z_k}^{*}\big(
			\wt  \Delta_k {\wt{\Theta}}_{k-1} \cdots {\wt{\Theta}}_{i+1} u_{i+1} 
			\oplus \cdots 
			\oplus \wt  \Delta_{i+1} u_{i+1} 
			\oplus v_i 
			\oplus \cdots 
			\oplus v_1
			\big)   \\
			&\qquad 
			: u_{i+1} \in \Gamma\otimes\mathcal{E}_{i+1}, \; 
			v_j \in \overline{\wt  \Delta_j (\Gamma\otimes(\mathcal{E}_j)}, \; j=1,\dots,i
			\Big\} 
			\ominus \wt{\mathcal G},\\
			\wt { \mathcal N}_i&= \left[ \Gamma\otimes\mathcal{E}_* \oplus {Z_k}^{*} \left( \ov{\wt  \Delta_k (\Gamma\otimes(\mathcal{E}_{k})} \oplus \cdots \oplus \ov{\wt  \Delta_{i+1} (\Gamma\otimes(\mathcal{E}_{i+1})} \oplus \{0\} \oplus \cdots \oplus \{0\} \right) \right] \\
			&\quad \ominus \left\{  {\wt{\Theta}}_k \cdots  {\wt{\Theta}}_{i+1} u_{i+1} + {Z_k}^{*} ( \wt  \Delta_k  {\wt{\Theta}}_{k-1} \cdots {\wt{\Theta}}_{i+1} u_{i+1} \oplus \cdots \oplus \wt  \Delta_{i+1} u_{i+1}  \oplus 0 \oplus \cdots \oplus 0 )  \right. \\
			&  \qquad \left.: u_{i+1} \in \Gamma\otimes\mathcal{E}_{i+1} \right\}. \\
		\end{align*}

		Since $\wt{Z}_k$ is unitary, we define the subspace $\wt{\mathcal G}_i$ by
		\begin{align*}
			\wt{\mathcal G}_i &\coloneqq \left\{ \wt{\Theta}_k \cdots \wt{\Theta}_{i+1} u_{i+1} + \wt{Z}_k^{*} ( \wt  \Delta_k \wt{\Theta}_{k-1} \cdots \wt{\Theta}_{i+1} u_{i+1} \oplus \cdots \oplus \wt  \Delta_{i+1} u_{i+1} \oplus v_i \oplus \cdots \oplus v_1 ) \right. \\
			&\qquad \left. : u_{i+1} \in \Gamma\otimes\mathcal{E}_{i+1}, \ v_j \in \ov{\im(\wt  \Delta_j)}, \ j=1,\dots,i \right\} \\
			&\supseteq \left\{ \wt{\Theta}_k \cdots \wt{\Theta}_1 u \oplus \wt{Z}_k^{*} ( \wt  \Delta_k \wt{\Theta}_{k-1} \cdots \wt{\Theta}_1 u \oplus \cdots \oplus \wt  \Delta_{i+1} \wt{\Theta}_i \cdots \wt{\Theta}_1 u \oplus \cdots \oplus \wt  \Delta_1 u ) \right. \\
			&\qquad \left. : u \in \Gamma\otimes\mathcal{E} \right\} \\
			&= \{ \wt{\Theta} u \oplus \wt  \Delta u : u \in \Gamma\otimes\mathcal{E}\} = \wt{\mathcal G}.
		\end{align*}
		It follows that $ \wt{\mathcal G}_i \supseteq \wt{\mathcal G},$ and we have $ \wt{\mathcal M}_i = \wt{\mathcal G}_i \ominus \wt{\mathcal G}.$
		Observe that 
		\begin{align*}
			&(\Gamma\otimes\mathcal{E}_* \oplus \ov{\wt  \Delta (\Gamma\otimes(\mathcal{E})}) \ominus \wt{\mathcal G}_i \\
			&= \left[ \Gamma\otimes\mathcal{E}_* \oplus \ov{\wt  \Delta ((\Gamma\otimes(\mathcal{E}))} \right] \ominus \left\{ \wt{\Theta}_k \cdots \wt{\Theta}_{i+1} u_{i+1} + \wt{Z}_k^{*} ( \wt  \Delta_k \wt{\Theta}_{k-1} \cdots \wt{\Theta}_{i+1} u_{i+1} \oplus \cdots \right. \\
			&\qquad \left. \cdots \oplus \wt  \Delta_{i+1} u_{i+1} \oplus v_i \oplus \cdots \oplus v_1 ) : u_{i+1} \in \Gamma\otimes\mathcal{E}_{i+1}, \ v_j \in \ov{\im(\wt  \Delta_j)}, \ j=1,\dots,i \right\} \\
			&= \left[ \Gamma\otimes\mathcal{E}_* \oplus \wt{Z}_k^{*} \left( \ov{\wt  \Delta_k (\Gamma\otimes(\mathcal{E}_{k})} \oplus \cdots \oplus \ov{\wt  \Delta_{i+1} (\Gamma\otimes(\mathcal{E}_{i+1})} \oplus \{0\} \oplus \cdots \oplus \{0\} \right) \right] \\
			&\quad \ominus \left\{ \wt{\Theta}_k \cdots \wt{\Theta}_{i+1} u_{i+1} + \wt{Z}_k^{*} ( \wt  \Delta_k \wt{\Theta}_{k-1} \cdots \wt{\Theta}_{i+1} u_{i+1} \oplus \cdots \oplus \wt  \Delta_{i+1} u_{i+1} \oplus 0 \oplus \cdots \oplus 0 ) \right. \\
			&\qquad \left. : u_{i+1} \in \Gamma\otimes\mathcal{E}_{i+1}\right\} \\
			&= \wt{\mathcal N}_i.
		\end{align*}
		Hence, $$\wt{\mathcal M}_i \oplus \wt{\mathcal N}_i = [\Gamma\otimes\mathcal{E}_* \oplus \ov{\wt  \Delta (\Gamma\otimes\mathcal{E})}]\ominus\wt{\mathcal G}=\wt{\mathcal H}.$$
		
		It remains to prove that the subspaces
		$\wt{\mathcal M}_1, \dots, \wt{\mathcal M}_{k-1}$
	are joint invariant subspaces for the row contraction $\wt{T} = [\wt{T}_1, \dots, \wt{T}_n]$. To establish this, it is sufficient to prove that $\wt{T}_j^*(\wt{\mathcal N}_i) \sub \wt{\mathcal N}_i$ for each operator $j = 1, \dots, n$ and $i = 1, \dots, k-1.$
		
		If $x \in \wt{\mathcal N}_i$, there exist elements $u \in \Gamma \otimes \mathcal{E}_*$ and $v_m \in \ov{\im \wt  \Delta_m}$ (for $m = i+1, \dots, k$) such that
		\begin{equation}
			x = u \oplus Z_k^{*}(v_k \oplus \dots \oplus v_{i+1} \oplus \underbrace{0 \oplus \cdots \oplus 0}_{i \text{ times}}),
		\end{equation}
		which satisfies the following orthogonality relation for every $u_{i+1}\in \Gamma \otimes \mathcal{E}_{i+1}$,
		\begin{multline}\label{orthogonal_M}
			\Big\langle u \oplus Z_k^{*}(v_k \oplus \dots \oplus v_{i+1} \oplus 0 \oplus \cdots \oplus 0), \\
			\Theta_k \cdots\wt  \Theta_{i+1} u_{i+1}\oplus Z_k^{*}(\wt  \Delta_k\wt  \Theta_{k-1} \cdots\wt  \Theta_{i+1} u_{i+1}\oplus \dots \oplus \wt  \Delta_{i+1} u_{i+1}\oplus 0 \oplus \cdots \oplus 0) \Big\rangle = 0.\\
			\langle\wt  \Theta_{i+1}^* \cdots\wt  \Theta_k^* u +\wt  \Theta_{i+1}^* \cdots\wt  \Theta_{k-1}^* \wt  \Delta_k v_k + \dots + \wt  \Delta_{i+1} v_{i+1}, \ u_{i+1}\rangle = 0.
		\end{multline}
		Since this equality hold for all $u_{i+1}\in \Gamma \otimes \mathcal{E}_{i+1}$, we have
		\begin{equation}\label{eq:orthogonality_general}
			\Theta_{i+1}^* \cdots\wt  \Theta_k^* u +\wt  \Theta_{i+1}^* \cdots\wt  \Theta_{k-1}^* \wt  \Delta_k v_k + \dots + \wt  \Delta_{i+1} v_{i+1} =0.
		\end{equation}
		
		Now, applying $\wt{T}_j^*$ to $x$ for an arbitrary $1 \le j \le n$, we obtain
		\begin{align*}
			\wt{T}_j^*(x) &= (S_j^* \otimes I_{\mathcal{L}_*}) u \oplus C_j^* Z_k^{*}(v_k \oplus \dots \oplus v_{i+1} \oplus 0 \oplus \cdots \oplus 0) \\
		\end{align*}

		%%%%%%%%%%%%%%%%%%%%%
		To establish that $\wt{T}_j^*(x) \in \wt{\mathcal N}_i$, it suffices to verify that $\wt{T}_j^*(x)$ satisfies the orthogonality condition \eqref{orthogonal_M}. For a fixed $u_{i+1}\in \Gamma \otimes \mathcal{E}_{i+1}$, we define the vector $y$ by     
		\begin{equation*}
			y =\wt  \Theta_k \cdots\wt  \Theta_{i+1} u_{i+1}\oplus Z_k^{*}(\wt  \Delta_k\wt  \Theta_{k-1} \cdots\wt  \Theta_{i+1} u_{i+1}\oplus \dots \oplus \wt  \Delta_{i+1} u_{i+1}\oplus 0 \oplus \cdots \oplus 0).
		\end{equation*}
		Exploiting the fact that the operators $\Theta_i$ are multi-analytic and that 
		\[
		Z_k C_j^* Z_k^{*} = \operatorname{diag}\{{C_j^{(k)}}^*, \dots, {C_j^{(1)}}^*\},
		\]
		we compute as follows
		\begin{align*}
			\langle \wt{T}_j^*(x), y \rangle &= \Big\langle (S_j^* \otimes I_{\mathcal{L}_*}) u \oplus C_j^* Z_k^{*}(v_k \oplus \dots \oplus v_{i+1} \oplus 0 \oplus \cdots \oplus 0),  \\
			&\qquad\wt  \Theta_k \cdots\wt  \Theta_{i+1} u_{i+1}\oplus Z_k^{*}(\wt  \Delta_k\wt  \Theta_{k-1} \cdots\wt  \Theta_{i+1} u_{i+1}\oplus \dots \oplus \wt  \Delta_{i+1} u_{i+1}\oplus 0 \oplus \cdots \oplus 0) \Big\rangle \\
			&= \langle (S_j^* \otimes I_{\mathcal{L}_*})\wt  \Theta_{i+1}^* \cdots\wt  \Theta_k^* u, u_{i+1}\rangle + \Big\langle Z_k C_j^* Z_k^{*} (v_k \oplus \dots \oplus v_{i+1} \oplus 0 \oplus \cdots \oplus 0), \\
			&\qquad \wt  \Delta_k\wt  \Theta_{k-1} \cdots\wt  \Theta_{i+1} u_{i+1}\oplus \dots \oplus \wt  \Delta_{i+1} u_{i+1}\oplus 0 \oplus \cdots \oplus 0 \Big\rangle \\
			&= \langle (S_j^* \otimes I_{\mathcal{L}_*})\wt  \Theta_{i+1}^* \cdots\wt  \Theta_k^* u, u_{i+1}\rangle \\
			&\quad + \langle C_j^{k*} v_k \oplus \cdots \oplus C_j^{(i+1)*} v_{i+1}, \wt  \Delta_k\wt  \Theta_{k-1} \cdots\wt  \Theta_{i+1} u_{i+1}\oplus \dots \oplus \wt  \Delta_{i+1} u_{i+1}\rangle \\
			&= \Big\langle (S_j^* \otimes I_{\mathcal{L}_*})\wt  \Theta_{i+1}^* \cdots\wt  \Theta_k^* u 
			+\wt  \Theta_{i+1}^* \cdots\wt  \Theta_{k-1}^* \wt  \Delta_k C_j^{k*} v_k 
			+ \dots 
			+ \wt  \Delta_{i+1} C_j^{(i+1)*} v_{i+1}, 
			\, u_{i+1}\Big\rangle.
		\end{align*}
		Applying the intertwining property $\wt  \Delta_m C_j^{m*} = (S_j^* \otimes I) \wt  \Delta_m$, we can reformulate the inner product as follows
		\begin{align*}
			\langle \wt{T}_j^*(x), y \rangle &= \langle (S_j^* \otimes I)\wt  \Theta_{i+1}^* \cdots\wt  \Theta_k^* u +\wt  \Theta_{i+1}^* \cdots\wt  \Theta_{k-1}^* (S_j^* \otimes I) \wt  \Delta_k v_k + \dots + (S_j^* \otimes I) \wt  \Delta_{i+1} v_{i+1}, u_{i+1}\rangle \\
			&= \Big\langle (S_j^* \otimes I) \big(\wt  \Theta_{i+1}^* \cdots\wt  \Theta_k^* u +\wt  \Theta_{i+1}^* \cdots\wt  \Theta_{k-1}^* \wt  \Delta_k v_k + \dots + \wt  \Delta_{i+1} v_{i+1} \big), u_{i+1}\Big\rangle.
		\end{align*}
		Substituting the orthogonality condition established in \eqref{eq:orthogonality_general}, we deduce that
		\begin{equation*}
			\langle \wt{T}_j^*(x), y \rangle = \langle (S_j^* \otimes I) (0), u_{i+1}\rangle = 0.
		\end{equation*}
		Therefore, we conclude that $\wt{T}_j^*(x) \in \wt{\mathcal N}_i$ for all $j=1,\ldots,n$.
		
		Finally, we demonstrate the nesting $\wt{\mathcal M}_1 \sub \wt{\mathcal M}_2 \sub \dots \sub \wt{\mathcal M}_{k-1}$. For any  $i \in \{1, \dots, k-2\}$, recall that
		\begin{multline*}
			\wt{\mathcal M}_{i+1} = \left\{ \wt{\Theta}_k \cdots \wt{\Theta}_{i+2} u_{i+2} \oplus \wt{Z}_k^{*} (\wt \Delta_k \wt{\Theta}_{k-1} \cdots \wt{\Theta}_{i+2} u_{i+2} \oplus \dots \oplus \wt \Delta_{i+2} u_{i+2} \oplus v_{i+1} \oplus \dots \oplus v_1) : \right. \\
			\left. u_{i+2} \in \Gamma\otimes\mathcal{E}_{i+2}, \ v_j \in \ov{\im(\wt \Delta_j)} \right\} \ominus \left\{ \wt{\Theta} u \oplus \wt \Delta u : u \in \Gamma\otimes\mathcal{E} \right\}.
		\end{multline*}
		By choosing $u_{i+2} = \wt{\Theta}_{i+1} u_{i+1}$, it is clear that the elements generating $\wt{\mathcal M}_i$ are contained within $\wt{\mathcal M}_{i+1}$. Hence, the inclusion $\wt{\mathcal M}_i \sub \wt{\mathcal M}_{i+1}$ follows directly.
	\end{proof}
	%%%%%%%%%%%%%%%%%%%%%%%%%%%%%%%%%%%%%%%%%%%%%%%%%%%%%%%%%%%%%%%%%%%%%%%%%%%%%%%%%%%%%%%%%%%%%%%%%%%%%%%%%%%%%%%%%%%%%%%%%%%
	The following theorem constructs a functional model associated with a given $k$-regular factorization of a contractive multi-analytic operator  that satisfies the Szeg\H{o} condition (see condition \ref{szego_condition}, Lemma \ref{cuntz_isometry}). Moreover, it characterizes the joint invariant subspaces of the corresponding model operator induced by this factorization.
	\begin{theorem}\label{main_thereom_2}
		Let \( T \) be a c.n.c. row contraction on a Hilbert space \( \mathcal H \), and let \( \Theta : \Gamma \otimes \mathcal E \to \Gamma \otimes \mathcal E_* \) be a contractive multi-analytic operator that coincides with the characteristic function of \( T \).	Assume that for an integer \( k \geq 2 \), \( \Theta \) admits a \( k \)-regular factorization of the form
		\begin{equation}\label{k_regular_fact21}
			\Theta = \Theta_k\cdots \Theta_1,
		\end{equation}
		where, for each \( i = 1, \dots, k \), the factor \( \Theta_i: \Gamma\otimes\mathcal{E}_i\to \Gamma\otimes\mathcal{E}_{i+1}\) is a contractive multi-analytic operator, with spaces \( \mathcal{E}_1 = \mathcal{E} \) and \( \mathcal{E}_{k+1} = \mathcal{E}_* \).
		
		Then, the operator \( T \) is unitarily equivalent to the row operator \( \mathbf{T} = [\mathbf{T}_1, \dots, \mathbf{T}_n] \) acting on the Hilbert space 
		\[
		\boldsymbol{\mathcal H}
		\coloneqq
		\Bigl[
		(\Gamma \otimes \mathcal{E}_{k+1})
		\oplus
		\overline{\Delta_k (\Gamma \otimes \mathcal{E}_k)}
		\oplus \cdots \oplus
		\overline{\Delta_1 (\Gamma \otimes \mathcal{E}_1)}
		\Bigr]
		\ominus \boldsymbol {\mathcal G}.
		\]
		For each \( m = 1, \dots, n \), the adjoint operators act via
		\[
		\boldsymbol{T}_m^*(f \oplus g_k \oplus \cdots \oplus g_1)
		=
		(S_m^* \otimes I_{\mathcal{E}_{k+1}}) f
		\oplus
		C_m^{(k)*} g_k
		\oplus \cdots \oplus
		C_m^{(1)*} g_1,
		\]
		where the subspace \( \boldsymbol {\mathcal G} \) and the defect operators \( \Delta_j \) are given by
		\[
		\boldsymbol {\mathcal G}
		=
		\left\{
		\Theta_k \cdots \Theta_1 f
		\oplus
		\Delta_k \Theta_{k-1} \cdots \Theta_1 f
		\oplus \cdots \oplus
		\Delta_1 f
		:
		f \in \Gamma \otimes \mathcal{E}_1
		\right\},
		\quad
		\Delta_j \coloneqq (I - \Theta_j^* \Theta_j)^{1/2}.
		\]
		
		Here, \( S_1, \dots, S_n \) denote the left creation operators on the free Fock space \( \Gamma \), and for each \( j = 1, \dots, k \), the tuple \( C^{(j)} = [C_1^{(j)}, \dots, C_n^{(j)}] \) represents the row isometry associated with \( \Theta_j \), as defined in Lemma $\ref{cuntz_isometry}$.
		
		Furthermore, the \( k \)-regular factorization \eqref{k_regular_fact21} induces a sequence of joint invariant subspaces explicitly described by
		\begin{align*}
			\boldsymbol{\mathcal{M}}_i
			=
			\Big\{
			&\Theta_k \cdots \Theta_{i+1} f_{i+1}
			\oplus
			\Delta_k \Theta_{k-1} \cdots \Theta_{i+1} f_{i+1}
			\oplus \cdots \oplus
			\Delta_{i+1} f_{i+1} \oplus g_i \oplus \cdots \oplus g_1 \\
			&
			:
			f_{i+1} \in \Gamma \otimes \mathcal{E}_{i+1},
			\;
			g_j \in \overline{\Delta_j(\Gamma \otimes \mathcal{E}_j)},
			\ j = 1, \dots, i
			\Big\}
			\ominus \mathcal G,
		\end{align*}
		for \( i = 1, \dots, k-1 \), which satisfy the ascending inclusion chain
		\[
		\boldsymbol{\mathcal{M}}_1 \sub \cdots \sub \boldsymbol{\mathcal{M}}_{k-1}.
		\]
	\end{theorem}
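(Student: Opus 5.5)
The plan is to transport the model of Theorem~\ref{main_thereom_02} through the unitary $Z_k$ attached to the given factorization. By Theorem~\ref{main_thereom_02}, $T$ is unitarily equivalent to $\wt T$ on
\[
\wt{\mathcal H}=\bigl[(\Gamma\otimes\mathcal E_*)\oplus\ov{\Delta(\Gamma\otimes\mathcal E)}\bigr]\ominus\wt{\mathcal G},
\]
where $\Delta=(I-\Theta^*\Theta)^{1/2}$, the adjoints are $\wt T_m^*=(S_m^*\otimes I)\oplus C_m^*$, and $C$ is the row isometry of $\Theta$ from Lemma~\ref{cuntz_isometry}. Since the factorization \eqref{k_regular_fact21} is $k$-regular, the isometry $Z_k$ of \eqref{k_regular} is unitary from $\ov{\Delta(\Gamma\otimes\mathcal E)}$ onto $\ov{\Delta_k(\Gamma\otimes\mathcal E_k)}\oplus\cdots\oplus\ov{\Delta_1(\Gamma\otimes\mathcal E_1)}$.

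Set $W\coloneqq I_{\Gamma\otimes\mathcal E_*}\oplus Z_k$. This is a unitary from $(\Gamma\otimes\mathcal E_*)\oplus\ov{\Delta(\Gamma\otimes\mathcal E)}$ onto $(\Gamma\otimes\mathcal E_{k+1})\oplus\ov{\Delta_k(\Gamma\otimes\mathcal E_k)}\oplus\cdots\oplus\ov{\Delta_1(\Gamma\otimes\mathcal E_1)}$. Its action on the generators of $\wt{\mathcal G}$ is
\[
W(\Theta f\oplus\Delta f)=\Theta_k\cdots\Theta_1 f\oplus\Delta_k\Theta_{k-1}\cdots\Theta_1 f\oplus\cdots\oplus\Delta_1 f .
\]
Hence $W\wt{\mathcal G}=\boldsymbol{\mathcal G}$, the map $W$ carries $\wt{\mathcal H}$ onto $\boldsymbol{\mathcal H}$, and we define $\mathbf T_m\coloneqq W\wt T_mW^*$.

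Next I would verify the formula for $\mathbf T_m^*$. The first component is immediate. For the second, take adjoints in the intertwining relation \eqref{Int-XT-gen}, $Z_kC_m=\diag(C_m^{(k)},\dots,C_m^{(1)})Z_k$. Because $Z_k$ is unitary, this gives $Z_kC_m^*Z_k^*=\diag(C_m^{(k)*},\dots,C_m^{(1)*})$. Therefore $\mathbf T_m^*=W\wt T_m^*W^*$ acts as stated on $\boldsymbol{\mathcal H}$. Invariance of $\boldsymbol{\mathcal H}$ under $\mathbf T_m^*$ is inherited from the invariance of $\wt{\mathcal H}$ under $\wt T_m^*$.

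For the chain, apply $W$ to the subspaces $\wt{\mathcal M}_i$ of the converse part of Theorem~\ref{main_thereom_02}. There the generators have the form $\Theta_k\cdots\Theta_{i+1}u_{i+1}\oplus Z_k^*(\cdots)$. Applying $W$ removes $Z_k^*$ and yields exactly the generating set of $\boldsymbol{\mathcal M}_i$, with $\ominus\wt{\mathcal G}$ becoming $\ominus\boldsymbol{\mathcal G}$; the $\mathcal G$ in the statement I read as $\boldsymbol{\mathcal G}$. Since $W$ is unitary, it preserves joint invariance, orthogonal complements and inclusions. The converse part of Theorem~\ref{main_thereom_02} already shows that the $\wt{\mathcal M}_i$ are joint invariant for $\wt T$, form an increasing chain, and satisfy $\wt{\mathcal H}=\wt{\mathcal M}_i\oplus\wt{\mathcal N}_i$. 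Transferring these facts through $W$ gives the invariance of the $\boldsymbol{\mathcal M}_i$ under $\mathbf T$ and the chain $\boldsymbol{\mathcal M}_1\sub\cdots\sub\boldsymbol{\mathcal M}_{k-1}$.

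The only delicate point is matching the model spaces. Theorem~\ref{main_thereom_02} is phrased for $\wt\Theta$ coinciding with $\Theta_T$, while here we use a fixed $\Theta$ with a prescribed factorization. I would take $\wt\Theta=\Theta$ directly, since coincidence with $\Theta_T$ is exactly the hypothesis of that theorem, so the same unitaries $Z_k$ appear. As a sanity check, I would also confirm the invariance directly in the new coordinates. The computation is the same as in Theorem~\ref{main_thereom_02}, using $\Delta_jC_m^{(j)*}=(S_m^*\otimes I)\Delta_j$ and $\Theta_j^*(S_m^*\otimes I)=(S_m^*\otimes I)\Theta_j^*$.
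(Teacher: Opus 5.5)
Your argument is correct and is essentially the paper's. The paper's unitary $\boldsymbol{\Phi}=\Phi^{\mathcal L_*}\oplus(\Phi_{\mathcal W_k}\oplus\cdots\oplus\Phi_{\mathcal W_1})$ is exactly $(I\oplus Z_k)\Phi$, i.e.\ your $W$ composed with the model unitary $\Phi$ from the proof of Theorem~\ref{main_thereom_02}, so both proofs strip off $Z_k^*$ and use $Z_kC_m^*Z_k^*=\diag(C_m^{(k)*},\dots,C_m^{(1)*})$. Your version is slightly cleaner: you work in the model space with the $Z_k$ of the given factorization and get invariance and nesting from the converse part of Theorem~\ref{main_thereom_02}, whereas the paper's one-line proof leans on the dilation-space construction, where the factorization is the one built from a chain rather than an arbitrary prescribed one.
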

	\begin{proof}
		The result is an immediate consequence of the proof of Theorem~\ref{main_thereom_02} by replacing the unitary operator \( \Phi \), defined in~\eqref{Phi_map}, with the unitary operator
		\[
		\boldsymbol{\Phi} : \mathcal{K} \longrightarrow \boldsymbol{\mathcal{K}}
		\coloneqq
		\gt\mathcal{E}_*
		\oplus
		\overline{\Delta_k (\gt\mathcal{E}_k)}
		\oplus \cdots \oplus
		\overline{\Delta_1 (\gt\mathcal{E}_1)},
		\]
		given by
		\[
		\boldsymbol{\Phi}
		=
		\Phi^{\mathcal{L}_*}
		\oplus
		\bigl(
		\Phi_{\mathcal{W}_k}
		\oplus \cdots \oplus
		\Phi_{\mathcal{W}_1}
		\bigr).
		\]
	\end{proof}
	
	%%%%%%%%%%%%%%%%%%%%%%%%%%%%%%%%%%%%%%%%%%%%%%%%%%%%%%%%%%%%%%%%%%%%%%%%%%%%%%%%%%%%%%%%%%%%%%%%%%%%%%%%%%%%%%%
	\section {Upper Triangular Block Operator Matrix Form}
	For a given chain of joint invariant subspaces of a c.n.c.\ row contraction \(T\), the operator \(T\) admits an upper triangular block operator matrix representation. In this framework, the following result is obtained.
	\begin{theorem}\label{unitary_equivalent_in_multivarible}
		Let $\Theta : \gt\mathcal{E} \rightarrow \gt\mathcal{E}_{*}$ be a purely contractive multi-analytic operator satisfying
		\[
		\ov{\Delta_{\Theta}(\gt\mathcal{E})} = \ov{\Delta_{\Theta}[(\gt\mathcal{E})\ominus\mathcal{E}]}.
		\]
		Let $\Theta=\Theta_k\dots\Theta_1$ be a $k$-regular factorization of $\Theta$, where \( {\Theta}_i:\Gamma\otimes \mathcal{E}_i\to \Gamma\otimes \mathcal{E}_{i+1}  \) are contractive multi-analytic operators for \( i=1,\cdots,k \) with \( \mathcal{E}_1 = \mathcal{E} \) and \( \mathcal{E}_{k+1} = \mathcal{E}_* \). Let
		\[
		\mathcal M_1 \sub \mathcal M_2 \sub \dots \sub \mathcal M_{k-1}
		\]
		denote the corresponding chain of joint invariant subspaces for the model row contraction $T_{\Theta}=[T_{\Theta,1},\dots,T_{\Theta,n}]$ acting on the Hilbert space
		\[
		\mathcal{H}(\Theta) := [(\gt\mathcal{E}_{*}) \oplus \ov{\Delta_{\Theta}(\gt\mathcal{E})}] \ominus \mathcal{G}_{\Theta}.
		\]
		The action of the adjoint operator is given by
		\[
		T_{\Theta,j}^{*}(f \oplus g) := (S_j^* \otimes I_{\mathcal{E}_{*}})f \oplus C_j^*g, \quad \text{for all } j=1,\dots,n,
		\]
		where $S_1,\dots,S_n$ are the left creation operators on the full Fock space $\Gamma$, the operator $C=[C_1,\dots,C_n]$ is the row isometry as defined in Lemma \ref{cuntz_isometry}, and $\Delta_{\Theta}:=(I-\Theta^*\Theta)^{1/2}.$ Here, the subspace $\mathcal{G}_{\Theta}$ is defined as
		\(\mathcal{G}_{\Theta} = \{\Theta w \oplus \Delta_{\Theta}w : w \in \gt\mathcal{E}\}.\)
		Define the orthogonal difference spaces by
		\[
		\mathcal{H}_1 := \mathcal M_1, \quad \mathcal{H}_i := \mathcal M_i \ominus \mathcal M_{i-1} \text{ for } i=2,\dots,k-1,
		\]
		and
		\[
		\mathcal{H}_k := \mathcal{H}(\Theta) \ominus \mathcal M_{k-1}.
		\]
		With respect to the orthogonal decomposition
		\[
		\mathcal{H}(\Theta) = \mathcal{H}_1 \oplus \dots \oplus \mathcal{H}_k,
		\]
		each operator $T_{\Theta,j}$ admits the upper triangular block operator matrix representation
		\[
		T_{\Theta,j} = \begin{bmatrix} A_j^1 & \dots & * \\ 0 & \dots & * \\ \vdots & \ddots & \vdots \\ 0 & \dots & A_j^k \end{bmatrix},
		\]
		where $A_j^i = P_{\mathcal{H}_i}T_{\Theta,j}|_{\mathcal{H}_i}$.
		Then, for each $i=1,\dots,k$, the row contraction $A^i=[A_1^i,\dots,A_n^i]$ is unitarily equivalent to $T_{\Theta_i}=[T_{\Theta_i,1},\dots,T_{\Theta_i,n}]$, the standard functional model operator associated with the multi-analytic operator $\Theta_i$.
	\end{theorem}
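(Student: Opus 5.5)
The plan is to reduce to the case $k=2$ and then induct on $k$ using Proposition~\ref{propo_k_regular_multi}, in the spirit of the classical Sz.-Nagy--Foia\c{s} result that the compression to an invariant subspace (respectively its orthocomplement) of a c.n.u. contraction has characteristic function equal to the purely contractive part of the first (respectively second) factor of the associated regular factorization. The natural base statement is Popescu's multivariable analogue in \cite{Po06}. If $\Theta=\Theta_2\Theta_1$ is $2$-regular and $\mathcal M$ is the associated joint invariant subspace of $T_\Theta$, then the following hold.
\begin{itemize}
\item[(a)] $T_\Theta|_{\mathcal M}$ is unitarily equivalent to $T_{\Theta_1}$.
\item[(b)] $P_{\mathcal M^\perp}T_\Theta|_{\mathcal M^\perp}$ is unitarily equivalent to $T_{\Theta_2}$.
\end{itemize}
Here $T_{\Theta_i}$ denotes the model built on $\mathcal H(\Theta_i)$, whose characteristic function is the purely contractive part of $\Theta_i$.

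To prove (a) and (b) in the present framework, I would work in the model of Theorem~\ref{main_thereom_2} with $k=2$. On $\mathcal M=\boldsymbol{\mathcal M}_1$, I would define a map into $(\Gamma\otimes\mathcal E_2)\oplus\overline{\Delta_1(\Gamma\otimes\mathcal E_1)}$ by
\[
\Theta_2 g\oplus\Delta_2 g\oplus h\ \mapsto\ g\oplus h .
\]
This map is isometric because $\|\Theta_2 g\|^2+\|\Delta_2 g\|^2=\|g\|^2$. It carries $\boldsymbol{\mathcal G}$ onto $\mathcal G_{\Theta_1}$, so it maps $\boldsymbol{\mathcal M}_1$ unitarily onto $\mathcal H(\Theta_1)$. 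Since $\Theta_2$ and $\Delta_2$ intertwine $S_j\otimes I$ with $S_j\otimes I$ and with $C^{(2)}_j$ respectively, this map intertwines the restrictions of the $\boldsymbol T_j$ with the $T_{\Theta_1,j}$.

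For (b), I would note that $\boldsymbol{\mathcal M}_1^\perp=\boldsymbol{\mathcal N}_1$ is literally $\big[(\Gamma\otimes\mathcal E_*)\oplus\overline{\Delta_2(\Gamma\otimes\mathcal E_2)}\big]\ominus\mathcal G_{\Theta_2}$ placed in the first two coordinates. The compression of $T_\Theta$ has adjoint equal to the restriction of $\boldsymbol T_j^*$, which is exactly $T_{\Theta_2,j}^*$.

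The Szeg\H{o} hypothesis is needed to make the $C^{(i)}$ Cuntz (Lemma~\ref{cuntz_isometry}), so that each $\Theta_i$ satisfies the Szeg\H{o} condition and the model $T_{\Theta_i}$ is the standard one. Pure contractivity is needed so that $\Theta$, and not merely its pure part, is the characteristic function.

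For general $k$, I would use Proposition~\ref{propo_k_regular_multi}: $\Theta=(\Theta_k\cdots\Theta_{i+1})(\Theta_i\cdots\Theta_1)$ is $2$-regular, and both partial products are regular. Applying (a) to the split at $i$ identifies $T_\Theta|_{\mathcal M_i}$ with $T_{\Theta_i\cdots\Theta_1}$. Under this identification, $\mathcal M_{i-1}$ corresponds to the subspace attached to the $2$-regular factorization $(\Theta_i)(\Theta_{i-1}\cdots\Theta_1)$. This correspondence can be checked by transporting the explicit formula for $\boldsymbol{\mathcal M}_{i-1}$ through the map above. Then (b) applied to this smaller factorization gives
\[
A^i=P_{\mathcal H_i}T_\Theta|_{\mathcal H_i}\cong T_{\Theta_i}.
\]
The cases $i=1$ and $i=k$ are the direct instances (a) and (b). The block upper triangular form itself is immediate from the joint invariance of the $\mathcal M_i$.

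The main obstacle is the compatibility step: verifying that the unitary of (a) sends $\mathcal M_{i-1}$ exactly onto the invariant subspace generated by the $2$-regular factorization $\Theta_i(\Theta_{i-1}\cdots\Theta_1)$ inside $\mathcal H(\Theta_i\cdots\Theta_1)$. This requires tracking $Z_k$ through the decomposition \eqref{deco_relation_ZK}. I would do this in the $\boldsymbol{\mathcal H}$ picture, where no $Z_k^*$ appears and the coordinates split cleanly.
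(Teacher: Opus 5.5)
Your proposal is correct, but it reaches the result by a different route from the paper.

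\textbf{The paper's route.} The paper stays in the two-component model $\mathcal H(\Theta)$, where everything is wrapped in $Z_k^*$. For each $i$ it identifies
$\mathcal H_i=\{\Phi_{i+1}f\oplus Z_k^*(\Lambda_{i+1}f\oplus g_i\oplus 0_{i-1}) : f\oplus g_i\in\mathcal H(\Theta_i)\}$, with $\Phi_{i+1}=\Theta_k\cdots\Theta_{i+1}$. It then computes $P_{\mathcal H_i}T_{\Theta,j}^*|_{\mathcal H_i}$ directly: it splits the image as $u_i\oplus v_i$ and shows $v_i\perp\mathcal H_i$ by cancelling the term $(S_j^*\otimes I)\Phi_{i+1}f_{i+1}(0)$. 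That cancellation uses the $(k-i)$-regularity of $\Phi_{i+1}$ to move $C_j^*$ past $Z_{k-i}^{\{k\},\dots,\{i+1\}}$.

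\textbf{Your route.} You work with forward operators and the model dilation $(S_j\otimes I)\oplus C_j^{(k)}\oplus\cdots\oplus C_j^{(1)}$, in the coordinates of Theorem~\ref{main_thereom_2}. There the generating spaces are invariant and the coordinate-dropping maps intertwine. So ``restrict to $\mathcal M_i$, then compress to $\mathcal M_i\ominus\mathcal M_{i-1}$'' can be read off without computing any projection. Your composite unitary coincides with the paper's $U_i$ written in those coordinates.

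\textbf{The compatibility step.} The step you flag is true: it is the paper's final Proposition for the partition $\{i,\dots,1\}$, $\{k,\dots,i+1\}$, proved via \eqref{deco_relation_ZK}. You can bypass it, however, by running your argument for (a) directly in the $(k+1)$-component model, using the map $\Phi_{i+1}f\oplus\Lambda_{i+1}f\oplus g_i\oplus\cdots\oplus g_1\mapsto f\oplus g_i\oplus\cdots\oplus g_1$.
\begin{itemize}
\item It is isometric because $\Phi_{i+1}^*\Phi_{i+1}+\Lambda_{i+1}^*\Lambda_{i+1}=I$.
\item It carries the generating space of $\boldsymbol{\mathcal M}_{i-1}$ onto $\{\Theta_iu\oplus\Delta_iu\oplus g_{i-1}\oplus\cdots\oplus g_1\}$.
\item Hence it carries $\boldsymbol{\mathcal M}_i\ominus\boldsymbol{\mathcal M}_{i-1}$ onto $\mathcal H(\Theta_i)\oplus 0_{i-1}$, and your argument for (b) then finishes.
\end{itemize}
Done this way, the only regularity needed is $k$-regularity. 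It is used to transport $\mathcal H(\Theta)$, $T_\Theta$ and the $\mathcal M_i$ (which are defined through $Z_k^*$) to the bold picture, via $I\oplus Z_k$ and Lemma~\ref{cuntz_isometry}. You should state this transport step explicitly.

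\textbf{Comparison.} Your approach is shorter and more conceptual. Combined with the regularity of the partial products (Proposition~\ref{partition_multi}), it also yields $T_\Theta|_{\mathcal M_i}\cong T_{\Theta_i\cdots\Theta_1}$. The paper's proof is a self-contained but more laborious computation in the original two-component model.

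\textbf{A side remark.} Neither argument actually uses the Szeg\H{o} condition or pure contractivity for these unitary equivalences. Those hypotheses matter only for reading $T_{\Theta_i}$ as a c.n.c.\ row contraction whose characteristic function is the purely contractive part of $\Theta_i$.
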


	\begin{proof}
		We begin by establishing the necessary notation. For $i=1,\dots,k$, set
		\begin{equation*} \label{eq:2.1}
			\Phi_i \coloneq \Theta_k\dots\Theta_i
		\end{equation*}
		and
		\begin{equation*} \label{eq:2.2}
			\Lambda_i \coloneq [\Delta_k\Theta_{k-1}\dots\Theta_i, \dots, \Delta_{i+1}\Theta_i, \Delta_i]^T.
		\end{equation*}
		Then we have
		\begin{align} \label{eq:2.3}
			\Phi_i^*\Phi_i + \Lambda_i^*\Lambda_i &= \Theta_i^*\dots\Theta_k^*\Theta_k\dots\Theta_i + [\Theta_i^*\dots\Theta_{k-1}^*\Delta_k^2\Theta_{k-1}\dots\Theta_i + \dots + \Delta_i^2] \nonumber \\
			&= I_{\gt\mathcal{E}_i}.
		\end{align}
		Because the given factorization $\Theta=\Theta_k\dots\Theta_1$ is $k$-regular and $\Theta$ satisfies the Szeg\H{o} condition, from Lemma \ref{cuntz_isometry}, this implies that for $i=1,\dots,k$, the row isometries $C^{(i)}$ associated with $\Theta_i$ are Cuntz row isometries. Consequently, we have
		\[
		\ov{\Delta_{\Theta_i}(\gt\mathcal{E}_i)} = \ov{\Delta_{\Theta_i}[(\gt\mathcal{E}_i)\ominus\mathcal{E}_i]}, \quad \text{for all } i=1,\dots,k.
		\]
		For notational convenience,
		\(0_m\)
		denotes the zero vector in the \(m\)-fold direct sum of the corresponding orthogonal difference spaces.
		
		Recall that, for each $i=1,\dots,k$, the joint invariant subspace $\mathcal{M}_i$ admits the structural decomposition described in Theorem \ref{main_thereom_02}
		\begin{align*}
			\mathcal M_i &= \{ \Theta_k\dots\Theta_{i+1}f_{i+1} \oplus Z_k^*(\Delta_k\Theta_{k-1}\dots\Theta_{i+1}f_{i+1} \oplus \dots \oplus \Delta_{i+1}f_{i+1} \oplus g_i \oplus \dots \oplus g_1) \\
			&\quad : f_{i+1} \in \gt\mathcal{E}_{i+1}, \, g_j \in \ov{\im(\Delta_j)}, \, j=1,\dots,i \} \ominus \mathcal{G}_{\Theta}.
		\end{align*}
		\vspace{0.3cm}
		\noindent \textbf{Case 1: For $i=1$.}  This yields
		\begin{align*}
			\mathcal{H}_1 = \mathcal M_1 &= \{ \Theta_k\dots\Theta_2f_2 \oplus Z_k^*(\Delta_k\Theta_{k-1}\dots\Theta_2f_2 \oplus \dots \oplus \Delta_2f_2 \oplus g_1) \\
			&\quad : f_2 \in \gt\mathcal{E}_2, \, g_1 \in \ov{\im(\Delta_1)} \} \ominus \mathcal{G}_{\Theta}.
		\end{align*}
		Let $h_1 \in \mathcal{H}_1$. Then $h_1 = \Phi_2f_2 \oplus Z_k^*(\Lambda_2f_2 \oplus g_1)$ for some $f_2 \in \gt\mathcal{E}_2$ and $g_1 \in \ov{\im(\Delta_1)}$, satisfying the orthogonality condition
		\[
		\langle \Phi_2f_2 \oplus Z_k^*(\Lambda_2f_2 \oplus g_1), \Theta w \oplus \Delta_{\Theta}w \rangle = 0 \quad \text{for all } w \in \gt\mathcal{E}.
		\]
		This implies
		\begin{align*}
			\langle \Phi_2f_2, \Phi_1w \rangle + \langle \Lambda_2f_2 \oplus g_1, \Lambda_2\Theta_1w \oplus \Delta_1w \rangle &= 0 \\
			\langle \Phi_2^*\Phi_2f_2, \Theta_1w \rangle + \langle \Lambda_2^*\Lambda_2f_2, \Theta_1w \rangle + \langle g_1, \Delta_1w \rangle &= 0 \\
			\langle f_2, \Theta_1w \rangle + \langle g_1, \Delta_1w \rangle &= 0 \\
			\langle f_2 \oplus g_1, \Theta_1w \oplus \Delta_1w \rangle &= 0.
		\end{align*}
		This orthogonality guarantees that $f_2 \oplus g_1 \in \mathcal{H}(\Theta_1)$. Therefore, we can characterize $\mathcal{H}_1$ as
		\[
		\mathcal{H}_1 = \{ \Phi_2f_2 \oplus Z_k^*(\Lambda_2f_2 \oplus g_1) : f_2 \oplus g_1 \in \mathcal{H}(\Theta_1) \}.
		\]
		Define a mapping $U_1 : \mathcal{H}_1 \rightarrow \mathcal{H}(\Theta_1)$ by setting
		\[
		U_1(\Phi_2f_2 \oplus Z_k^*(\Lambda_2f_2 \oplus g_1)) = f_2 \oplus g_1.
		\]
		To prove that $U_1$ is a well-defined unitary operator, it suffices to show that it is an isometry
		\begin{align*}
			\| \Phi_2f_2 \oplus Z_k^*(\Lambda_2f_2 \oplus g_1) \|^2 &= \| \Phi_2f_2 \|^2 + \| \Lambda_2f_2 \|^2 + \| g_1 \|^2 \\
			&= \| f_2 \|^2 + \| g_1 \|^2 = \| f_2 \oplus g_1 \|^2.
		\end{align*}
		Next, for each \(j=1,\dots,n\), consider the action of the operator
		\begin{align} \label{eq:2.4}
			(U_1A_j^{1*}U_1^*)(f_2 \oplus g_1) &= (U_1P_{\mathcal{H}_1}T_{\Theta,j}^*|_{\mathcal{H}_1})(\Phi_2f_2 \oplus Z_k^*(\Lambda_2f_2 \oplus g_1)) \nonumber \\
			&= U_1P_{\mathcal{H}_1}T_{\Theta,j}^*(\Phi_2f_2 \oplus Z_k^*(\Lambda_2f_2 \oplus g_1)) \nonumber \\
			&= U_1P_{\mathcal{H}_1}\{ (S_j^* \otimes I_{\mathcal{E}_*})\Phi_2f_2 \oplus C_j^*(Z_k^*(\Lambda_2f_2 \oplus g_1)) \}.
		\end{align}
		Let $y = (S_j^* \otimes I_{\mathcal{E}_*})\Phi_2f_2 \oplus C_j^*(Z_k^*(\Lambda_2f_2 \oplus g_1))$. We utilize the standard orthogonal decomposition $f_2 = \sum_{r=1}^n(S_rS_r^* \otimes I_{\mathcal{E}_2})f_2 + f_2(0)$, where $f_2(0) = P_{e_\emptyset\otimes\mathcal{E}_2}f_2$.
		Then,
		\begin{align*}
			(S_j^* \otimes I_{\mathcal{E}_*})\Phi_2f_2 &= (S_j^* \otimes I_{\mathcal{E}_*})\Phi_2\left(\sum_{r=1}^n(S_rS_r^* \otimes I_{\mathcal{E}_2})f_2 + f_2(0)\right) \\
			&= (S_j^* \otimes I_{\mathcal{E}_*})\left(\sum_{r=1}^n(S_r \otimes I_{\mathcal{E}_k})\Phi_2(S_r^* \otimes I_{\mathcal{E}_2})f_2 + \Phi_2f_2(0)\right) \\
			&= \Phi_2(S_j^* \otimes I_{\mathcal{E}_2})f_2 + (S_j^* \otimes I_{\mathcal{E}_*})\Phi_2f_2(0).
		\end{align*}
		This gives the fundamental relation
		\begin{equation} \label{eq:2.5}
			(S_j^* \otimes I_{\mathcal{E}_*})\Phi_2f_2 = \Phi_2(S_j^* \otimes I_{\mathcal{E}_2})f_2 + (S_j^* \otimes I_{\mathcal{E}_*})\Phi_2f_2(0).
		\end{equation}
		We decompose $y = u_1 \oplus v_1$, where
		\[
		u_1 = \Phi_2(S_j^* \otimes I_{\mathcal{E}_2})f_2 \oplus Z_k^*(\Lambda_2(S_j^* \otimes I_{\mathcal{E}_2})f_2 \oplus C_j^{(1)*}g_1)
		\]
		and
		\[
		v_1 = (S_j^* \otimes I_{\mathcal{E}_*})\Phi_2f_2(0) \oplus [C_j^*(Z_k^*(\Lambda_2f_2 \oplus g_1)) - Z_k^*(\Lambda_2(S_j^* \otimes I_{\mathcal{E}_2})f_2 \oplus C_j^{(1)*}g_1)].
		\]
		Since $T_{\Theta_1,j}^*(f_2 \oplus g_1) = (S_j^* \otimes I_{\mathcal{E}_2})f_2 \oplus C_j^{(1)*}g_1 \in \mathcal{H}(\Theta_1)$, it is immediate that $u_1 \in \mathcal{H}_1$. 
		
		We now claim that $v_1 \perp \mathcal{H}_1$. By Lemma \ref{cuntz_isometry}, we know that $C_j^*(Z_k^*(0_{k-1} \oplus g_1)) = Z_k^*(0_{k-1} \oplus C_j^{(1)*}g_1)$. Therefore, $v_1$ simplifies to
		\[
		v_1 = (S_j^* \otimes I_{\mathcal{E}_*})\Phi_2f_2(0) \oplus [C_j^*(Z_k^*(\Lambda_2f_2 \oplus 0)) - Z_k^*(\Lambda_2(S_j^* \otimes I_{\mathcal{E}_2})f_2 \oplus 0)].
		\]
		To prove $v_1 \perp \mathcal{H}_1$, we must show that $\langle v_1, h_1 \rangle = 0$ for all $h_1 \in \mathcal{H}_1$. Any element $h_1 \in \mathcal{H}_1$ admits the representation
		\[
		h_1 = \Phi_2w_2 \oplus Z_k^*(\Lambda_2w_2 \oplus g_1), \quad \text{where } w_2 \oplus g_1 \in \mathcal{H}(\Theta_1).
		\]
		Define the second component of $v_1$ as
		\begin{equation} \label{eq:2.6}
			\Pi_2 = C_j^*(Z_k^*(\Lambda_2f_2 \oplus 0)) - Z_k^*(\Lambda_2(S_j^* \otimes I_{\mathcal{E}_2})f_2 \oplus 0).
		\end{equation}
		To simplify $\Pi_2$, we apply the intertwining relation $Z_k C_j = D_j Z_k$ from Lemma \ref{cuntz_isometry}, where $D_j = \diag(C_j^{(k)}, \dots, C_j^{(1)})$. Taking the adjoint yields $C_j^* Z_k^* = Z_k^* D_j^*$. Applying this identity to the first term of $\Pi_2$
		\begin{equation} \label{eq:2.7}
			C_j^*(Z_k^*(\Lambda_2f_2 \oplus 0)) = Z_k^*(D_{j \geq 2}^* \Lambda_2f_2 \oplus 0),
		\end{equation}
		where $D_{j \geq 2} = \diag(C_j^{(k)}, \dots, C_j^{(2)})$. 
		
		Let $Z_{k-1}^{\{k\},\dots,\{2\}}$ be the canonical isometry associated with the sub-factorization $\Phi_2 = \Theta_k \dots \Theta_2$, satisfying $\Lambda_2f_2 = Z_{k-1}^{\{k\},\dots,\{2\}} \Delta_{\Phi_2}f_2$. Applying Lemma \ref{cuntz_isometry} directly to the factorization $\Phi_2 = \Theta_k \dots \Theta_2$, we deduce that
		\[
		Z_{k-1}^{\{k\},\dots,\{2\}} C_j^{(\Phi_2)} = D_{j \geq 2} Z_{k-1}^{\{k\},\dots,\{2\}}.
		\]
		From Proposition \ref{propo_k_regular_multi} the $k$-regular factorization of $\Theta=\Theta_k\dots\Theta_1$ implies that $\Phi_2 = \Theta_k \dots \Theta_2$ is a $(k-1)$-regular factorization, $Z_{k-1}^{\{k\},\dots,\{2\}}$ is a unitary operator. This yields
		\[
		Z_{k-1}^{\{k\},\dots,\{2\}} C_j^{(\Phi_2)*} = D_{j \geq 2}^* Z_{k-1}^{\{k\},\dots,\{2\}}.
		\]
		Consequently, from equation \eqref{eq:2.7}, we get
		\begin{align*}
			C_j^*(Z_k^*(\Lambda_2f_2 \oplus 0)) &= Z_k^*(D_{j \geq 2}^* Z_{k-1}^{\{k\},\dots,\{2\}} \Delta_{\Phi_2}f_2 \oplus 0) \\
			&= Z_k^*(Z_{k-1}^{\{k\},\dots,\{2\}} C_j^{(\Phi_2)*} \Delta_{\Phi_2}f_2 \oplus 0).
		\end{align*}
		Substituting this back into $\Pi_2$, we find
		\begin{align*}
			\Pi_2 &= Z_k^*(Z_{k-1}^{\{k\},\dots,\{2\}} C_j^{(\Phi_2)*} \Delta_{\Phi_2}f_2 \oplus 0) - Z_k^*(Z_{k-1}^{\{k\},\dots,\{2\}} \Delta_{\Phi_2}(S_j^* \otimes I_{\mathcal{E}_2})f_2 \oplus 0) \\
			&= Z_k^*(Z_{k-1}^{\{k\},\dots,\{2\}} x_1 \oplus 0),
		\end{align*}
		where $x_1 = C_j^{(\Phi_2)*} \Delta_{\Phi_2}f_2 - \Delta_{\Phi_2}(S_j^* \otimes I_{\mathcal{E}_2})f_2$.
		
		Next, utilizing the fact that $Z_k$ is a unitary operator, we evaluate the inner product
		\begin{align*}
			\langle \Pi_2, Z_k^*(\Lambda_2w_2 \oplus g_1) \rangle &= \langle Z_k^*(Z_{k-1}^{\{k\},\dots,\{2\}} x_1 \oplus 0), Z_k^*(\Lambda_2w_2 \oplus g_1) \rangle \\
			&= \langle Z_{k-1}^{\{k\},\dots,\{2\}} x_1, \Lambda_2w_2 \rangle \\
			&= \langle x_1, \Delta_{\Phi_2}w_2 \rangle \quad (\text{since } \Lambda_2w_2 = Z_{k-1}^{\{k\},\dots,\{2\}} \Delta_{\Phi_2}w_2).
		\end{align*}
		Using the relation $x_1 = C_j^{(\Phi_2)*} \Delta_{\Phi_2}f_2 - \Delta_{\Phi_2}(S_j^* \otimes I_{\mathcal{E}_2})f_2,$ we get 
		\begin{align*}
			\langle x_1, \Delta_{\Phi_2}w_2 \rangle &= \langle \Delta_{\Phi_2}f_2, C_j^{(\Phi_2)} \Delta_{\Phi_2}w_2 \rangle - \langle \Delta_{\Phi_2}(S_j^* \otimes I_{\mathcal{E}_2})f_2, \Delta_{\Phi_2}w_2 \rangle \\
			&= \langle \Delta_{\Phi_2}f_2, \Delta_{\Phi_2}(S_j \otimes I_{\mathcal{E}_2})w_2 \rangle - \langle \Delta_{\Phi_2}(S_j^* \otimes I_{\mathcal{E}_2})f_2, \Delta_{\Phi_2}w_2 \rangle \\
			&= \langle f_2, (S_j \otimes I_{\mathcal{E}_2})w_2 \rangle - \langle \Phi_2f_2, \Phi_2(S_j \otimes I_{\mathcal{E}_2})w_2 \rangle \\
			&\quad - \left( \langle (S_j^* \otimes I_{\mathcal{E}_2})f_2, w_2 \rangle - \langle \Phi_2(S_j^* \otimes I_{\mathcal{E}_2})f_2, \Phi_2w_2 \rangle \right) \\
			&= \langle \Phi_2(S_j^* \otimes I_{\mathcal{E}_2})f_2, \Phi_2w_2 \rangle - \langle (S_j^* \otimes I_{\mathcal{E}_*})\Phi_2f_2, \Phi_2w_2 \rangle \\
			&= \langle (\Phi_2(S_j^* \otimes I_{\mathcal{E}_2}) - (S_j^* \otimes I_{\mathcal{E}_*})\Phi_2)f_2, \Phi_2w_2 \rangle.
		\end{align*}
		From the relation \eqref{eq:2.5}, we obtain $\Phi_2(S_j^* \otimes I_{\mathcal{E}_2})f_2 - (S_j^* \otimes I_{\mathcal{E}_*})\Phi_2f_2 = -(S_j^* \otimes I_{\mathcal{E}_*})\Phi_2f_2(0)$, so
		\[
		\langle x_1, \Delta_{\Phi_2}w_2 \rangle = -\langle (S_j^* \otimes I_{\mathcal{E}_*})\Phi_2f_2(0), \Phi_2w_2 \rangle.
		\]
		Finally, we compute $\langle v_1, h_1 \rangle$
		\begin{align*}
			\langle v_1, h_1 \rangle &= \langle (S_j^* \otimes I_{\mathcal{E}_*})\Phi_2f_2(0), \Phi_2w_2 \rangle + \langle \Pi_2, Z_k^*(\Lambda_2w_2 \oplus g_1) \rangle \\
			&= \langle (S_j^* \otimes I_{\mathcal{E}_*})\Phi_2f_2(0), \Phi_2w_2 \rangle - \langle (S_j^* \otimes I_{\mathcal{E}_*})\Phi_2f_2(0), \Phi_2w_2 \rangle \\
			&= 0.
		\end{align*}
		Therefore, $v_1 \perp \mathcal{H}_1$. Returning to equation \eqref{eq:2.4}, we obtain
		\begin{align*}
			(U_1A_j^{1*}U_1^*)(f_2 \oplus g_1) &= U_1P_{\mathcal{H}_1} \{u_1 \oplus v_1\} \\
			&= U_1(u_1) \\
			&= U_1\left( \Phi_2(S_j^* \otimes I_{\mathcal{E}_2})f_2 \oplus Z_k^*(\Lambda_2(S_j^* \otimes I_{\mathcal{E}_2})f_2 \oplus C_j^{(1)*}g_1) \right) \\
			&= (S_j^* \otimes I_{\mathcal{E}_2})f_2 \oplus C_j^{(1)*}g_1 \\
			&= T_{\Theta_1,j}^*(f_2 \oplus g_1).
		\end{align*}
		Therefore, $A^1 = [A_1^1, \dots, A_n^1]$ is unitarily equivalent to the model operator $T_{\Theta_1} = [T_{\Theta_1,1}, \dots, T_{\Theta_1,n}]$.
		
		\vspace{0.3cm}
		\noindent \textbf{Case 2: For $i=2, \dots, k-1$.}
		From Theorem \ref{main_thereom_02}, the subspace $\mathcal{H}_i = \mathcal M_i \ominus \mathcal M_{i-1}$ is given as 
		\begin{align*}
			\mathcal{H}_i &= \{ \Phi_{i+1}f_{i+1} \oplus Z_k^*(\Lambda_{i+1}f_{i+1} \oplus g_i \oplus 0_{i-1}) : f_{i+1} \in \gt\mathcal{E}_{i+1}, \, g_i \in \ov{\im(\Delta_i)} \} \\
			&\quad\ominus \{{\Phi_if_i \oplus Z_k^*(\Lambda_if_i \oplus 0_{i-1}):f_i \in \gt\mathcal{E}_i}\}.
		\end{align*}
		Let $h_i \in \mathcal{H}_i$. It takes the form
		\[
		h_i = \Phi_{i+1}f_{i+1} \oplus Z_k^*(\Lambda_{i+1}f_{i+1} \oplus g_i \oplus 0_{i-1})
		\]
		such that for all $f_i \in \gt\mathcal{E}_i$,
		\begin{equation} \label{eq:2.9}
			\langle \Phi_{i+1}f_{i+1} \oplus Z_k^*(\Lambda_{i+1}f_{i+1} \oplus g_i \oplus 0_{i-1}), \Phi_if_i \oplus Z_k^*(\Lambda_if_i \oplus 0_{i-1}) \rangle = 0.
		\end{equation}
		This yields
		\begin{align*}
			\langle \Phi_{i+1}f_{i+1}, \Phi_if_i \rangle + \langle \Lambda_{i+1}f_{i+1} \oplus g_i, \Lambda_if_i \rangle &= 0 \\
			\langle \Phi_{i+1}^*\Phi_{i+1}f_{i+1}, \Theta_if_i \rangle + \langle \Lambda_{i+1}^*\Lambda_{i+1}f_{i+1}, \Theta_if_i \rangle + \langle g_i, \Delta_if_i \rangle &= 0 \\
			\langle f_{i+1} \oplus g_i, \Theta_if_i \oplus \Delta_if_i \rangle &= 0.
		\end{align*}
		This orthogonality condition ensures that $f_{i+1} \oplus g_i \in \mathcal{H}(\Theta_i)$. Therefore, $\mathcal{H}_i$ is precisely characterized as
		\[
		\mathcal{H}_i = \{ \Phi_{i+1}f_{i+1} \oplus Z_k^*(\Lambda_{i+1}f_{i+1} \oplus g_i \oplus 0_{i-1}) : f_{i+1} \oplus g_i \in \mathcal{H}(\Theta_i) \}.
		\]
		We define the mapping $U_i : \mathcal{H}_i \rightarrow \mathcal{H}(\Theta_i)$ by
		\[
		U_i(\Phi_{i+1}f_{i+1} \oplus Z_k^*(\Lambda_{i+1}f_{i+1} \oplus g_i \oplus 0_{i-1})) = f_{i+1} \oplus g_i.
		\]
		This operator is surjective by definition, and it acts as an isometry because
		\begin{align*}
			\| \Phi_{i+1}f_{i+1} \oplus Z_k^*(\Lambda_{i+1}f_{i+1} \oplus g_i \oplus 0_{i-1}) \|^2 &= \| \Phi_{i+1}f_{i+1} \|^2 + \| \Lambda_{i+1}f_{i+1} \|^2 + \| g_i \|^2 \\
			&= \| f_{i+1} \|^2 + \| g_i \|^2 = \| f_{i+1} \oplus g_i \|^2.
		\end{align*}
		Hence, $U_i$ is a well-defined unitary operator mapping the $i$-th orthogonal difference space exactly to the functional model space of $\Theta_i$. Now we compute
		\begin{align} \label{eq:3.0}
			(U_iA_j^{i*}U_i^*)(f_{i+1} \oplus g_i) &= U_i P_{\mathcal{H}_i} T_{\Theta,j}^* \{ \Phi_{i+1}f_{i+1} \oplus Z_k^*(\Lambda_{i+1}f_{i+1} \oplus g_i \oplus 0_{i-1}) \} \nonumber \\
			&= U_i P_{\mathcal{H}_i} \{ (S_j^* \otimes I_{\mathcal{E}_*})\Phi_{i+1}f_{i+1} \oplus C_j^*Z_k^*(\Lambda_{i+1}f_{i+1} \oplus g_i \oplus 0_{i-1}) \}.
		\end{align}
		Let $y = (S_j^* \otimes I_{\mathcal{E}_*})\Phi_{i+1}f_{i+1} \oplus C_j^*Z_k^*(\Lambda_{i+1}f_{i+1} \oplus g_i \oplus 0_{i-1})$. We split $y = u_i \oplus v_i$ where
		\[
		u_i = \Phi_{i+1}(S_j^* \otimes I_{\mathcal{E}_{i+1}})f_{i+1} \oplus Z_k^*(\Lambda_{i+1}(S_j^* \otimes I_{\mathcal{E}_{i+1}})f_{i+1} \oplus C_j^{(i)*}g_i \oplus 0_{i-1})
		\]
		and
		\begin{align*}
			v_i &= (S_j^* \otimes I_{\mathcal{E}_*})\Phi_{i+1}f_{i+1}(0) \oplus [ C_j^*Z_k^*(\Lambda_{i+1}f_{i+1} \oplus g_i \oplus 0_{i-1}) \\
			&\quad - Z_k^*(\Lambda_{i+1}(S_j^* \otimes I_{\mathcal{E}_{i+1}})f_{i+1} \oplus C_j^{(i)*}g_i \oplus 0_{i-1}) ].
		\end{align*}
		Since $T_{\Theta_i,j}^*(f_{i+1} \oplus g_i) = (S_j^* \otimes I_{\mathcal{E}_{i+1}})f_{i+1} \oplus C_j^{(i)*}g_i \in \mathcal{H}(\Theta_i)$, this establishes that $u_i \in \mathcal{H}_i$.
		
		We claim that $v_i \perp \mathcal{H}_i$. By Lemma \ref{cuntz_isometry}, we have
		\[
		C_j^* Z_k^*(0_{k-i} \oplus g_i \oplus 0_{i-1})
		= Z_k^*\bigl(0_{k-i} \oplus {C_j^{(i)}}^* g_i \oplus 0_{i-1}\bigr),
		\]
		which yields the following simplified expression for $v_i$
		\begin{align*}
			v_i &= (S_j^* \otimes I_{\mathcal{E}_*})\Phi_{i+1}f_{i+1}(0) \oplus [ C_j^*Z_k^*(\Lambda_{i+1}f_{i+1} \oplus 0_i)- Z_k^*(\Lambda_{i+1}(S_j^* \otimes I_{\mathcal{E}_{i+1}})f_{i+1} \oplus 0_i) ].
		\end{align*}
		For any $h_i \in \mathcal{H}_i$, $h_i$ admits the representation
		\[
		h_i = \Phi_{i+1}w_{i+1} \oplus Z_k^*(\Lambda_{i+1}w_{i+1} \oplus g_i \oplus 0_{i-1}), \quad \text{where } w_{i+1} \oplus g_i \in \mathcal{H}(\Theta_i).
		\]
		Let $\Pi_{i+1}$ denote the second component of $v_i$. Then, using the relation $C_j^* Z_k^* = Z_k D_j^*$, we obtain
		\begin{align*}
			\Pi_{i+1} &= C_j^*Z_k^*(\Lambda_{i+1}f_{i+1} \oplus 0_i) - Z_k^*(\Lambda_{i+1}(S_j^* \otimes I_{\mathcal{E}_{i+1}})f_{i+1} \oplus 0_i) \\
			&= Z_k^*(D_{j \geq i+1}^* \Lambda_{i+1}f_{i+1} \oplus 0_i) - Z_k^*(\Lambda_{i+1}(S_j^* \otimes I_{\mathcal{E}_{i+1}})f_{i+1} \oplus 0_i),
		\end{align*}
		where $D_{j \geq i+1} = \diag(C_j^{(k)}, \dots, C_j^{(i+1)})$. Let $Z_{k-i}^{\{k\},\dots,\{i+1\}}$ be the canonical isometry for the sub-factorization $\Phi_{i+1} = \Theta_k \dots \Theta_{i+1}$. By Lemma \ref{cuntz_isometry}, 
		\[
		Z_{k-i}^{\{k\},\dots,\{i+1\}} C_j^{(\Phi_{i+1})} = D_{j \geq i+1} Z_{k-i}^{\{k\},\dots,\{i+1\}}.
		\]
		Because from Proposition \ref{propo_k_regular_multi} the sub-factorization $\Phi_{i+1} = \Theta_k \dots \Theta_{i+1}$ is regular, this isometry is unitary,and using the relations $Z_{k-i}^{\{k\},\dots,\{i+1\}} {C_j^{(\Phi_{i+1})}}^* = D^*_{j \geq i+1} Z_{k-i}^{\{k\},\dots,\{i+1\}} $ and  $\Lambda_{i+1} = Z_{k-i}^{\{k\},\dots,\{i+1\}} \Delta_{\Phi_{i+1}}$ , we get 
		\[
		\Pi_{i+1} = Z_k^*(Z_{k-i}^{\{k\},\dots,\{i+1\}} x_i \oplus 0_i),
		\]
		where $x_i = C_j^{(\Phi_{i+1})*} \Delta_{\Phi_{i+1}}f_{i+1} - \Delta_{\Phi_{i+1}}(S_j^* \otimes I_{\mathcal{E}_{i+1}})f_{i+1}$.
		Next, utilizing $\Lambda_{i+1} = Z_{k-i}^{\{k\},\dots,\{i+1\}} \Delta_{\Phi_{i+1}}$, we calculate the inner product
		\begin{align} \label{eq:3.2}
			\langle \Pi_{i+1}, Z_k^*(\Lambda_{i+1}w_{i+1} \oplus g_i \oplus 0_{i-1}) \rangle &= \langle Z_{k-i}^{\{k\},\dots,\{i+1\}} x_i, \Lambda_{i+1}w_{i+1} \rangle \nonumber \\
			&= \langle x_i, \Delta_{\Phi_{i+1}}w_{i+1} \rangle \nonumber \\
			&= \left\langle \Delta_{\Phi_{i+1}} f_{i+1}, \, C_j^{(\Phi_{i+1})} \Delta_{\Phi_{i+1}} w_{i+1} \right\rangle \nonumber\\
			&
			- \left\langle \Delta_{\Phi_{i+1}} (S_j^* \otimes I_{\mathcal E_{i+1}}) f_{i+1}, \, \Delta_{\Phi_{i+1}} w_{i+1} \right\rangle \nonumber\\
			&= \langle (\Phi_{i+1}(S_j^* \otimes I_{\mathcal{E}_{i+1}}) - (S_j^* \otimes I_{\mathcal{E}_*})\Phi_{i+1})f_{i+1}, \Phi_{i+1}w_{i+1} \rangle \nonumber \\
			&= -\langle (S_j^* \otimes I_{\mathcal{E}_*})\Phi_{i+1}f_{i+1}(0), \Phi_{i+1}w_{i+1} \rangle.
		\end{align}
		Thus, completing the inner product for $v_i$, we obtain
		\begin{align*}
			\langle v_i, h_i \rangle &= \langle (S_j^* \otimes I_{\mathcal{E}_*})\Phi_{i+1}f_{i+1}(0), \Phi_{i+1}w_{i+1} \rangle + \langle \Pi_{i+1}, Z_k^*(\Lambda_{i+1}w_{i+1} \oplus g_i \oplus 0_{i-1}) \rangle \\
			&= 0.
		\end{align*}
		Hence, $v_i \perp \mathcal{H}_i$. Returning to equation \eqref{eq:3.0}, the projection isolates $u_i$, giving
		\begin{align*}
			(U_iA_j^{i*}U_i^*)(f_{i+1} \oplus g_i) &= U_i P_{\mathcal{H}_i} \{ u_i \oplus v_i \} = U_i(u_i) \\
			&= (S_j^* \otimes I_{\mathcal{E}_{i+1}})f_{i+1} \oplus C_j^{(i)*}g_i \\
			&= T_{\Theta_i,j}^*(f_{i+1} \oplus g_i).
		\end{align*}
		Therefore, for $i=2, \dots, k-1$, the row contraction $A^i = [A_1^i, \dots, A_n^i]$ is unitarily equivalent to $T_{\Theta_i}$.
		
		%%%%%%%%%%%%%%%%%%%%%%%%%%%%%%%%%%%%%%%%%%%%%%%%%%%%%%%%%%%%%%%%%%%%%%%%%%%%%%%%%%%%%%%%%%%%%%%%%%%%%%%%%%%%%%%%%%
		\noindent \textbf{Case 3.} For $i=k$, the subspace $\mathcal H_k$ is given by the orthogonal difference:
		\[
		\mathcal H_k = \mathcal H_{\Theta} \ominus \mathcal M_{k-1}.
		\]
		Using the structural decomposition, this space can be explicitly written as:
		\begin{align*}
			\mathcal H_k &= \left[ (\gt\mathcal{E}_{k+1}) \oplus Z_k^* (\ov{\Delta_k(\gt\mathcal{E}_k)} \oplus \{0\}\cdots\oplus\{0\}) \right] \\
			&\quad \ominus \{ \Theta_k f_k \oplus Z_k^*(\Delta_k f_k \oplus 0_{k-1}) : f_k \in \gt\mathcal{E}_k \}.
		\end{align*}
		
		If $h_k \in \mathcal{H}_k$, then $h_k = f_{k+1} \oplus Z_k^*(g_k \oplus 0_{k-1})$, and it must satisfy the orthogonality condition $\langle h_k, \Theta_k f_k \oplus Z_k^*(\Delta_k f_k \oplus 0_{k-1}) \rangle = 0$ for all $f_k \in \gt\mathcal{E}_k$.
		Because $Z_k^*$ is an isometry, this simplifies directly to:
		\[
		\langle f_{k+1} \oplus g_k, \Theta_k f_k \oplus \Delta_k f_k \rangle = 0, \quad \text{for all } f_k \in \gt\mathcal{E}_k.
		\]
		Thus, we can completely characterize $\mathcal H_k$ as
		\[
		\mathcal{H}_k = \{ f_{k+1} \oplus Z_k^*(g_k \oplus 0_{k-1}) : f_{k+1} \oplus g_k \in \mathcal{H}(\Theta_k) \}.
		\]
		We define the unitary operator $U_k : \mathcal{H}_k \rightarrow \mathcal{H}(\Theta_k)$ by
		\[
		U_k(f_{k+1} \oplus Z_k^*(g_k \oplus 0_{k-1})) = f_{k+1} \oplus g_k.
		\]
		As subspace $\mathcal M_k$ are invarinat under  $T_{\Theta,j}$ for each $j$, then $${A_j^k}^*=T_{\Theta,j}^*|_{\mathcal H_k}.$$
		We evaluate this explicitly
		\begin{align*}
			(U_k{A_j^{k}}^*U_k^*)(f_{k+1} \oplus g_k) &= U_k  T_{\Theta,j}^* (f_{k+1} \oplus Z_k^*(g_k \oplus 0_{k-1})) \\
			&= U_k  \{ (S_j^* \otimes I_{\mathcal{E}_{*}})f_{k+1} \oplus C_j^*Z_k^*(g_k \oplus 0_{k-1}) \} \\
			&= U_k  \{ (S_j^* \otimes I_{\mathcal{E}_{*}})f_{k+1} \oplus Z_k^*(C_j^{(k)*}g_k \oplus 0_{k-1}) \}\\
			&=(S_j^* \otimes I_{\mathcal{E}_{*}})f_{k+1} \oplus C_j^{(k)*}g_k\\
			&= T_{\Theta_k,j}^*(f_{k+1} \oplus g_k).
		\end{align*}
		This confirms that $A^k = [A_1^k, \dots, A_n^k]$ is unitarily equivalent to $T_{\Theta_k}$, concluding the entire proof.
	\end{proof}

	\begin{corollary}
		Under the hypotheses of Theorem $\ref{unitary_equivalent_in_multivarible}$, the characteristic function of the row contraction $A_i = [A_1^i, \dots, A_n^i]$ coincides with the purely contractive part of the multi-analytic operator $\Theta_i$ for each $i = 1, \dots, k$.
	\end{corollary}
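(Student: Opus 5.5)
By Theorem~\ref{unitary_equivalent_in_multivarible}, each diagonal block $A^i$ is unitarily equivalent to the model row contraction $T_{\Theta_i}$ on $\mathcal H(\Theta_i)=[(\gt\mathcal E_{i+1})\oplus\ov{\Delta_i(\gt\mathcal E_i)}]\ominus\{\Theta_i w\oplus\Delta_i w\}$. Since characteristic functions are unitary invariants up to coincidence, it suffices to show that the characteristic function of $T_{\Theta_i}$ coincides with the purely contractive part of $\Theta_i$. The plan is to reduce this to Popescu's model theorem for a single contractive multi-analytic operator, applied to $\Theta_i$. The needed hypothesis is available: in the proof of Theorem~\ref{unitary_equivalent_in_multivarible} we already observed, via Lemma~\ref{cuntz_isometry} and $k$-regularity, that each $C^{(i)}$ is a Cuntz row isometry. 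Equivalently, $\Theta_i$ satisfies the Szeg\H{o} condition $\ov{\Delta_i(\gt\mathcal E_i)}=\ov{\Delta_i[(\gt\mathcal E_i)\ominus\mathcal E_i]}$.

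First, split off the unitary constant part. Write $\mathcal E_i=\mathcal E_i^0\oplus\mathcal E_i^u$, where $\mathcal E_i^u=\{h:\|P_{\mathcal E_{i+1}}\theta_i h\|=\|h\|\}$ and $\theta_i$ is the symbol of $\Theta_i$. Correspondingly split $\mathcal E_{i+1}=\mathcal E_{i+1}^0\oplus\theta_i(\mathcal E_i^u)$. Then I will check that $\Theta_i=\Theta_i^0\oplus(I_\Gamma\otimes W)$, with $W$ unitary and $\Theta_i^0$ purely contractive. This is the standard argument: for $h\in\mathcal E_i^u$, the higher Fourier coefficients of $\theta_i h$ vanish because $\Theta_i$ is contractive, and the analogous statement for the adjoint at the vacuum level shows that the decomposition is reducing. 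The Szeg\H{o} condition passes to $\Theta_i^0$, because $\Delta_i$ vanishes on $\gt\mathcal E_i^u$.

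Second, show that the model space is unchanged by removing the unitary part. The summand $\gt\theta_i(\mathcal E_i^u)\oplus 0$ is filled entirely by $\{(I\otimes W)w\oplus 0\}$, so it is annihilated in the orthogonal complement. Hence $\mathcal H(\Theta_i)=\mathcal H(\Theta_i^0)$ canonically, and $T_{\Theta_i}$ corresponds to $T_{\Theta_i^0}$.

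Third, invoke Popescu's result (\cite{Po06}, the model theorem for purely contractive multi-analytic operators with the Szeg\H{o} condition, the converse direction of the functional model behind Theorem~\ref{main_thereom_02}). It states that $T_{\Theta_i^0}$ is a c.n.c.\ row contraction whose characteristic function coincides with $\Theta_i^0$. Combined with the two preceding steps, this yields the corollary. For the extreme cases $i=1$ and $i=k$ the argument is identical, since Theorem~\ref{unitary_equivalent_in_multivarible} treats all $i$ uniformly.

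The main obstacle is the third step: verifying that Popescu's converse applies in this setting. Concretely, one must confirm that the defect spaces of $T_{\Theta_i^0}$ are identified with $\mathcal E_i^0$ and $\mathcal E_{i+1}^0$ and that the characteristic function computes to $\Theta_i^0$. If a direct citation is unavailable, I would compute $D_{T^*}$ on the vacuum component and use the Cuntz property of $C^{(i)}$ to show that the $\ov{\Delta_i(\cdot)}$ part contributes no defect. This mirrors the classical Sz.-Nagy--Foia\c{s} computation.
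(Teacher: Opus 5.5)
Your proposal is correct and follows the route the paper intends. The paper gives no separate proof, but the corollary rests on three ingredients: Theorem~\ref{unitary_equivalent_in_multivarible} gives $A^i\cong T_{\Theta_i}$; the Szeg\H{o} condition for each $\Theta_i$ comes from Lemma~\ref{cuntz_isometry} together with $k$-regularity; and Popescu's model theorem in \cite{Po06} identifies the characteristic function of $T_{\Theta_i}$ with the purely contractive part of $\Theta_i$. Your explicit splitting $\Theta_i=\Theta_i^0\oplus(I_\Gamma\otimes W)$ and the identification $\mathcal H(\Theta_i)=\mathcal H(\Theta_i^0)$ are sound, but they only unpack what that citation already covers.
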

	
	\begin{corollary}
		Assume the hypotheses of Theorem{\rm ~\ref{unitary_equivalent_in_multivarible}}. Then, for each $i = 1, \dots, k-2$, the equality $\mathcal{M}_i = \mathcal{M}_{i+1}$ holds if and only if $\Theta_{i+1}$ is a unitary constant; that is, there exists a unitary operator $U : \mathcal{E}_{i+1} \to \mathcal{E}_{i+2}$ such that 
		\[
		\Theta_{i+1} = I_{\Gamma} \otimes U.
		\]
	\end{corollary}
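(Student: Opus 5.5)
Since $\mathcal M_i \sub \mathcal M_{i+1}$ always holds, the equality $\mathcal M_i=\mathcal M_{i+1}$ is the same as $\mathcal H_{i+1}=\mathcal M_{i+1}\ominus\mathcal M_i=\{0\}$. By Theorem~\ref{unitary_equivalent_in_multivarible}, $\mathcal H_{i+1}$ is unitarily equivalent, via the unitary $U_{i+1}$ built in Case 2 of that proof, to the model space $\mathcal H(\Theta_{i+1})$. (For $i\le k-2$ we have $i+1\le k-1$, so Case 2 applies.) The statement therefore reduces to showing
\[
\mathcal H(\Theta_{i+1})=\{0\}\iff \Theta_{i+1}=I_\Gamma\otimes U \text{ for a unitary } U:\mathcal E_{i+1}\to\mathcal E_{i+2}.
\]
By definition,
\[
\mathcal H(\Theta_{i+1})=\bigl[(\Gamma\otimes\mathcal E_{i+2})\oplus\ov{\Delta_{i+1}(\Gamma\otimes\mathcal E_{i+1})}\bigr]\ominus\{\Theta_{i+1}w\oplus\Delta_{i+1}w\}.
\]

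For the ``if'' direction, suppose $\Theta_{i+1}=I_\Gamma\otimes U$ with $U$ unitary. Then $\Delta_{i+1}=0$ and $\Theta_{i+1}$ is onto $\Gamma\otimes\mathcal E_{i+2}$, so $\mathcal H(\Theta_{i+1})=\{0\}$.

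For the ``only if'' direction, suppose $\mathcal H(\Theta_{i+1})=\{0\}$. Then the map $w\mapsto\Theta_{i+1}w\oplus\Delta_{i+1}w$, which is an isometry, has dense and hence closed range equal to the whole space. So it is unitary. Now use the model operator $T_{\Theta_{i+1}}$ on this space. Its adjoint is $T^*_{\Theta_{i+1},j}=(S_j^*\otimes I)\oplus C_j^{(i+1)*}$, and on the graph $\{\Theta_{i+1}w\oplus\Delta_{i+1}w\}$ the direct sum $(S_j\otimes I)\oplus C^{(i+1)}_j$ acts like $S_j\otimes I$ on $w$. Hence the whole space $(\Gamma\otimes\mathcal E_{i+2})\oplus\ov{\im\Delta_{i+1}}$ is unitarily equivalent to $\Gamma\otimes\mathcal E_{i+1}$, intertwining the row isometry $(S\otimes I)\oplus C^{(i+1)}$ with $S\otimes I$. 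The latter is a pure shift, while $C^{(i+1)}$ is Cuntz. This holds by the argument in the proof of Theorem~\ref{unitary_equivalent_in_multivarible}: $k$-regularity, the Szeg\H{o} condition and Lemma~\ref{cuntz_isometry} give it. A Cuntz summand inside a pure shift must vanish by the Wold decomposition, so $\ov{\im\Delta_{i+1}}=\{0\}$ and $\Theta_{i+1}$ is an isometry. Since it is also onto, $\Theta_{i+1}$ is a unitary multi-analytic operator. Comparing wandering subspaces, $\Theta_{i+1}$ maps $\mathcal E_{i+1}=\ker(S^*\otimes I)$ onto $\mathcal E_{i+2}=\ker(S^*\otimes I)$, because a unitary intertwining the shifts also intertwines their adjoints. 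This shows that the symbol is a constant unitary $U$, so $\Theta_{i+1}=I_\Gamma\otimes U$.

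The main obstacle is the ``only if'' step. One must check carefully that $\mathcal H(\Theta_{i+1})=\{0\}$ forces the defect to vanish, which relies on the Cuntz property of $C^{(i+1)}$. If the Cuntz argument turns out awkward, I would instead try a direct route. Since $\Theta_{i+1}$ is a contraction with $\Theta_{i+1}\Theta_{i+1}^*+\cdots$, surjectivity of the graph gives $\Theta_{i+1}\Theta_{i+1}^*+\Delta_{i+1}$-compatibility, i.e.\ $\Theta_{i+1}\Theta_{i+1}^*=I$ and $\Delta_{i+1}\Theta_{i+1}^*=0$. Combined with multi-analyticity and the Szeg\H{o} condition, this should yield $\Delta_{i+1}=0$.
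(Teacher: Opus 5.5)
Your proof is correct and follows the paper's route. You reduce $\mathcal M_i=\mathcal M_{i+1}$ to $\mathcal H_{i+1}=\{0\}$, identify $\mathcal H_{i+1}$ with $\mathcal H(\Theta_{i+1})$ via Case 2, and then use that $\mathcal H(\Theta_{i+1})=\{0\}$ exactly when $\Theta_{i+1}$ is a unitary constant. The paper only asserts this last equivalence. Your Wold-decomposition argument actually proves it, and it correctly identifies that the Cuntz property of $C^{(i+1)}$ is essential: without it, $\Theta_{i+1}=0$ from $\Gamma\otimes\mathcal E_{i+1}\neq\{0\}$ into $\Gamma\otimes\{0\}$ has trivial model space but is not a unitary constant. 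The garbled fallback in your last paragraph is therefore unnecessary and can be dropped.
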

	\begin{proof}
		For each $i = 1, \dots, k-2$, the subspace 
		\[
		\mathcal{H}_{i+1} = \mathcal{M}_{i+1} \ominus \mathcal{M}_i
		\]
		is unitarily equivalent to the model space $H(\Theta_{i+1})$. Moreover, $H(\Theta_{i+1}) = \{0\}$ if and only if $\Theta_{i+1}$ is a unitary constant. The result now follows.
	\end{proof}
	
	It is worth noting that, apart from the regularity aspect, part~(i) of the following theorem was already established by G.~Popescu in Theorem~3.8 of \cite{Po06}. The following theorem strengthens the preceding result.
	
	\begin{theorem}\label{m_r_divisor}
		Let $\Theta:\Gamma\otimes\mathcal E \to \Gamma\otimes\mathcal E_*$ be a purely contractive multi-analytic operator satisfying the Szeg\H{o} condition. Then we have the following results
		
		\begin{enumerate}[label=\rm(\roman*), ref=\thetheorem(\roman*)]
			\item \label{m_r_divisor_a}
			Let $\Theta = \Theta_2 \Theta_1 = \Theta'_2 \Theta'_1$ be $2$-regular factorizations, and let $\mathcal{M}$ and $\mathcal{M}'$ be the joint invariant subspaces under $T_\Theta$ corresponding to these factorizations, respectively. If $\mathcal{M} \subset \mathcal{M}'$, then there exists a contractive multi-analytic operator $\Omega$ such that
			\begin{align}\label{divisor}
				\Theta'_1 = \Omega \Theta_1.
			\end{align}
			Moreover, $\Theta_2 = \Theta'_2 \Omega$, and the factorization in \eqref{divisor} is $2$-regular. In addition, if $\mathcal{M} = \mathcal{M}'$, then $\Omega$ is a unitary constant operator.
			\item \label{m_r_divisor_b} Suppose that 
			\[
			\Theta = \Theta'_{2} \Theta'_{1} \quad \text{and} \quad \Theta'_1 = \Omega \Theta_{1}
			\]
			are $2$-regular factorizations. Consequently, the product $\Theta = (\Theta'_{2} \Omega) \Theta_{1}$ yields a $2$-regular factorization. Let $\mathcal M$ and $\mathcal M'$ denote the joint invariant subspaces under $T_\Theta$   corresponding to the $2$-regular factorizations $\Theta = (\Theta'_{2} \Omega) \Theta_{1}$ and $\Theta = \Theta'_{2} \Theta'_{1}$, respectively. Then, we have $\mathcal M \subset \mathcal M'$.
		\end{enumerate}
	\end{theorem}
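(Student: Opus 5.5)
Both parts will be reduced to the three-factor machinery of Theorem~\ref{main_thereom_02}, Theorem~\ref{main_thereom_2} and Proposition~\ref{propo_k_regular_multi}, working inside the model $T_\Theta$, which is c.n.c.\ with characteristic function coinciding with $\Theta$ because $\Theta$ is purely contractive and satisfies the Szeg\H{o} condition.

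\emph{Part (i).} Suppose $\mathcal M\subset\mathcal M'$ are joint invariant subspaces of $T_\Theta$. Apply the direct half of Theorem~\ref{main_thereom_02} with $k=3$ to the chain $\mathcal M_1=\mathcal M\subset\mathcal M_2=\mathcal M'$. This gives a $3$-regular factorization $\Theta=\Psi_3\Psi_2\Psi_1$ whose intermediate spaces are the wandering subspaces $\mathcal F_1,\mathcal F_2$ of the Wold decompositions of $\mathcal K_i=\mathcal K\ominus(\mathcal H\ominus\mathcal M_i)$. In the same construction with $k=2$ and a single invariant subspace, the factor $\Theta_1$ attached to $\mathcal M$ is $\Phi^{\mathcal F_1}P^{\mathcal F_1}(\Phi^{\mathcal L})^*$, and $\Theta_2$ is $\Phi^{\mathcal L_*}P^{\mathcal L_*}(\Phi^{\mathcal F_1})^*$. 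The construction depends only on $\mathcal M_i$, so these coincide with $\Psi_1$ and $\Psi_3\Psi_2$, and likewise $\Theta_1'=\Psi_2\Psi_1$ and $\Theta_2'=\Psi_3$.

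Here I will use the uniqueness statement implicit in Popescu's correspondence (Theorem~\ref{2_r_invariant}): a $2$-regular factorization is determined by its invariant subspace up to a unitary constant in the middle. Thus there are unitary constants $W,W'$ with
\[
\Theta_1=W\Psi_1,\quad \Theta_2=\Psi_3\Psi_2W^*,\quad \Theta_1'=W'\Psi_2\Psi_1,\quad \Theta_2'=\Psi_3W'^*.
\]
Set $\Omega:=W'\Psi_2W^*$. Then $\Omega\Theta_1=\Theta_1'$ and $\Theta_2'\Omega=\Theta_2$. Since $\Psi_3\Psi_2\Psi_1$ is $3$-regular, Proposition~\ref{propo_k_regular_multi}(ii) shows that $\Psi_2\Psi_1$ is $2$-regular, and unitary constants preserve regularity, so the factorization $\Theta_1'=\Omega\Theta_1$ is $2$-regular.

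If $\mathcal M=\mathcal M'$, then $\mathcal H_2=\mathcal M'\ominus\mathcal M=\{0\}$. By Theorem~\ref{unitary_equivalent_in_multivarible} this space is unitarily equivalent to the model space of $\Psi_2$, so that model space is trivial and $\Psi_2$, hence $\Omega$, is a unitary constant. I will check this via the argument of the second corollary, which applies verbatim with $k=3$ and $i=1$.

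\emph{Part (ii).} Suppose $\Theta=\Theta_2'\Theta_1'$ and $\Theta_1'=\Omega\Theta_1$ are $2$-regular. Then $\Theta=\Theta_2'\Omega\Theta_1$ satisfies condition (ii) of Proposition~\ref{propo_k_regular_multi} with $k=3$, so it is $3$-regular. By (i)$\Rightarrow$(iv) of that proposition, $(\Theta_2'\Omega)\Theta_1$ is $2$-regular.

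Now apply the converse half of Theorem~\ref{main_thereom_2} to this $3$-regular factorization. It produces a chain $\boldsymbol{\mathcal M}_1\subseteq\boldsymbol{\mathcal M}_2$. Here $\boldsymbol{\mathcal M}_1$ has the form attached to the grouping $(\Theta_2'\Omega)\Theta_1$, and $\boldsymbol{\mathcal M}_2$ the form attached to $\Theta_2'(\Omega\Theta_1)$, after transporting by the unitaries $Z_3$ and $\bigl(Z^{\{3\},\{2\}}_2\oplus I\bigr)$ from identity~\eqref{deco_relation_ZK}.

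\emph{Main obstacle.} The main technical point is checking that these transported subspaces coincide, in the model $\mathcal H(\Theta)$, with the subspaces Theorem~\ref{2_r_invariant} assigns to the two $2$-regular factorizations. Concretely, I must show that the map $\Delta_{\Theta_2'\Omega}g\mapsto \Delta_3\Omega g\oplus\Delta_2 g$ identifies the generating sets, using identity~\eqref{deco_relation_ZK} and the fact that $\Phi_2^*\Phi_2+\Lambda_2^*\Lambda_2=I$. Granting this, $\mathcal M=\boldsymbol{\mathcal M}_1\subseteq\boldsymbol{\mathcal M}_2=\mathcal M'$.

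The same identification, in reverse, is what justifies the uniqueness-up-to-unitary-constant step in part (i). It is the step I will verify most carefully, by comparing the Wold wandering spaces directly.
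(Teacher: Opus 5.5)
Your proposal is correct. Part (ii) is essentially the paper's argument, while part (i) takes a different route.

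\textbf{Part (ii).} The paper checks generator by generator that $\Theta_2 f\oplus Z^*(\Delta_2 f\oplus g)=\Theta_2'f'\oplus Z'^*(\Delta_2'f'\oplus g')$ with $f'=\Omega f$. It does this via the identity $Z^*(Z''^*\oplus I)=Z'^*(I\oplus Z'''^*)$, and both sides equal $Z_3^*$ by \eqref{deco_relation_ZK} applied to the two groupings of $\Theta_2'\Omega\Theta_1$. So your identification of $\mathcal M,\mathcal M'$ with the members of the $3$-chain, followed by the chain inclusion, is the same computation repackaged.

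\textbf{Part (i), the paper's route.} The paper cites Popescu's Theorem 3.8 in \cite{Po06} for $\Omega$, including $\Theta_2=\Theta_2'\Omega$ and the unitary-constant case. It then gets regularity from Proposition~\ref{propo_k_regular_multi}. Since $(\Theta_2'\Omega)\Theta_1=\Theta_2\Theta_1$ and $\Theta_2'(\Omega\Theta_1)=\Theta_2'\Theta_1'$ are $2$-regular by hypothesis, (iv) gives $3$-regularity, and (ii) then gives $2$-regularity of $\Omega\Theta_1$.

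\textbf{Part (i), your route.} You build $\Omega$ from the $k=3$ construction of Theorem~\ref{main_thereom_02}. Your identification of the $k=2$ factors with the groupings $\Psi_1,\Psi_3\Psi_2$ and $\Psi_2\Psi_1,\Psi_3$ is right. It uses $\mathcal W_2\perp M_V(\mathcal L_*)$ and $\mathcal W_1=\mathcal R_1\subseteq\mathcal R_2\perp M_V(\mathcal F_2)$. Regularity then comes free from the $3$-regularity of $\Psi_3\Psi_2\Psi_1$. This essentially re-derives Popescu's divisor theorem from the paper's own machinery.

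\textbf{What your route costs.} It needs the round-trip uniqueness statement, which is not in Theorem~\ref{2_r_invariant} as quoted. If you cite it from \cite{Po06}, you are using the $\mathcal M=\mathcal M'$ case of the very theorem the paper cites, and the economy is lost. If you prove it by your Wold comparison, it does work. In the three-component model, $\mathcal K\ominus(\mathcal H\ominus\mathcal M)$ is the closed set $\{\Theta_2g\oplus\Delta_2g\oplus h\}$. On it, $V$ is unitarily equivalent to $(S\otimes I_{\mathcal F})\oplus C^{(1)}$ via $g\oplus h\mapsto\Theta_2g\oplus\Delta_2g\oplus h$. You can read off $\mathcal R_1=0\oplus0\oplus\overline{\Delta_1(\Gamma\otimes\mathcal E)}$ and $\mathcal F_1\cong\mathcal F$ only because $C^{(1)}$ is Cuntz. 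That comes from the Szeg\H{o} condition together with Lemma~\ref{cuntz_isometry}, so make that dependence explicit.

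\textbf{A simplification.} You do not need Theorem~\ref{unitary_equivalent_in_multivarible} for the unitary-constant case. If $\mathcal M=\mathcal M'$, then $\mathcal K_1=\mathcal K_2$, so $\mathcal F_1=\mathcal F_2$ and $\Psi_2$ is the identity. Hence $\Omega=W'W^*$ is directly a unitary constant.
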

	\begin{proof} (i) In the Theorem 3.8 of article \cite{Po06}  G. Popescu has proved that there exist multi-analytic operator $\Omega$ such that                 
		\[
		\Theta'_1 = \Omega \Theta_1 
		\] and if $\mathcal M = \mathcal M'$, then $\Omega$ is a unitary constant operator. In the proof of this theorem he also prove that $\Theta_2 = \Theta'_2 \Omega$. It remains to show that	\[
		\Theta'_1 = \Omega \Theta_1 
		\] is a $2$-regular factorization. Since the factorizations \(\Theta = (\Theta_2' \Omega) \Theta_1\) and \(\Theta = \Theta_2' (\Omega \Theta_1)\) are \(2\)-regular, it follows from the Proposition \ref{propo_k_regular_multi} that the factorization \(\Theta = \Theta_2' \Omega \Theta_1\) is \(3\)-regular. Consequently, the factorization
		\(\Theta'_1 = \Omega \Theta_1\) is \(2\)-regular.\\
		(ii) Let \(\mathcal{M}\) and \(\mathcal{M}'\) be the joint invariant subspaces associated with the $2$-regular factorizations \(\Theta = \Theta_2 \Theta_1\) and \(\Theta = \Theta_2' \Theta_1'\), respectively with \(\Theta_2 = \Theta_2' \Omega\). It follows from Theorem \ref{2_r_invariant} that these subspaces are given by
		
		\begin{align*}
			{\mathcal M} = \bigg\{ \Theta_2 f \oplus Z^*(\Delta_2 f \oplus g) : \ & f \in \Gamma \otimes \mathcal{F}, \ g \in \overline{\Delta_1 (\Gamma \otimes \mathcal{E})} \bigg\} \\
			& \ominus \bigg\{ \Theta w \oplus \Delta_\Theta w : w \in \Gamma \otimes \mathcal{E} \bigg\},\\
			{\mathcal M}' = \bigg\{ \Theta'_2 f' \oplus Z'^*(\Delta'_2 f' \oplus g') : \ & f' \in \Gamma \otimes \mathcal{F'}, \ g' \in \overline{\Delta'_1 (\Gamma \otimes \mathcal{E})} \bigg\} \\
			& \ominus \bigg\{ \Theta w \oplus \Delta_\Theta w : w \in \Gamma \otimes \mathcal{E} \bigg\},\\
		\end{align*}
		where \(Z\) and \(Z'\) are unitary operators associated with $2$-regular factorizations of \(\Theta\): \(\Theta = \Theta_2 \Theta_1\) and \(\Theta = \Theta_2' \Theta_1'\), respectively. Assume \(\Theta_2 f \oplus Z^*(\Delta_2 f \oplus g) \in \mathcal{M}\) for some \(f \in \Gamma \otimes \mathcal{F}\) and \(g \in \overline{\Delta_1 (\Gamma \otimes \mathcal{E})}\). Let \(f' = \Omega f\), which implies that \(\Theta_2 f = \Theta_2' f'\).
		Observe that \(\Theta_2 = \Theta_2' \Omega\) is $2$-regular. Let \(Z^{''}:\overline{\Delta_2(\Gamma \otimes \mathcal{F})} \to \overline{\Delta'_2(\Gamma \otimes \mathcal{F'})} \oplus \overline{\Delta_{\Omega}(\Gamma \otimes \mathcal{F})}\) be the associated unitary operator defined by 
		
		\[
		Z''(\Delta_2 f) := \Delta'_2 \Omega f \oplus \Delta_{\Omega} f, \quad f \in \Gamma \otimes \mathcal{F}.
		\]
	Then
		\begin{align*}
			Z^*(\Delta_2 f \oplus g) &= Z^*(Z''^* \oplus I_{\overline{\Delta_1 (\Gamma \otimes \mathcal{E})}})((\Delta'_2 \Omega f \oplus \Delta_{\Omega} f) \oplus g) \\
			&= Z'^*(I_{\overline{\Delta'_2(\Gamma \otimes \mathcal{F'})}} \oplus Z'''^*)(\Delta'_2 \Omega f \oplus \Delta_{\Omega} f \oplus g) \\
			&= Z'^*(\Delta'_2 f' \oplus g'),
		\end{align*}
		
		where \(Z'''\) is the associated unitary operator corresponding to the $2$-regular factorization \(\Theta'_1 = \Omega \Theta_1\), and \(g' = Z'''^*(\Delta_{\Omega} f \oplus g\)). Hence, we conclude that \(\mathcal{M} \subset \mathcal{M}'\).
	\end{proof}

	\begin{proposition}
		Let $\Theta = \Theta_k \cdots \Theta_1$ be a $k$-regular factorization of a purely contractive multi-analytic operator satisfying the Szeg\H{o} condition, where \( {\Theta}_i:\Gamma\otimes \mathcal{E}_i\to \Gamma\otimes \mathcal{E}_{i+1}  \) are contractive multi-analytic operators for \( i=1,\cdots,k \) with \( \mathcal{E}_1 = \mathcal{E} \) and \( \mathcal{E}_{k+1} = \mathcal{E}_* \) and let 
		\[
		\mathcal{M}_1 \sub \mathcal{M}_2 \sub \cdots \sub \mathcal{M}_{k-1}
		\]
		be the corresponding chain of joint invariant subspaces for the model contraction $T_\Theta$ acting on $\mathcal{H}(\Theta)$. Partition the index set $\{1, \dots, k\}$ into $r$ disjoint subsets $J_1, \dots, J_r$ defined by
		\[
		J_1 = \{ j_1, \ldots, 1 \}, \quad 
		J_i = \{j_i, \dots, j_{i-1}+1\} \ (i=2,\dots,r),
		\]
		where $1 \le j_1 < j_2 < \cdots < j_r = k$. 
		
		Consider the associated aggregated $r$-regular factorization
		\[
		\Theta = \Theta_{J_r} \cdots \Theta_{J_1}, 
		\quad \text{where} \quad 
		\Theta_{J_i} := \Theta_{j_i} \cdots \Theta_{j_{i-1}+1}.
		\]
		Let $\mathcal{M}_{J_1} \sub \cdots \sub \mathcal{M}_{J_{r-1}}$ be the corresponding chain of joint invariant subspaces. Then
		\[
		\mathcal{M}_{J_i} = \mathcal{M}_{j_i}, \quad \text{for all } i = 1, \dots, r-1.
		\]
	\end{proposition}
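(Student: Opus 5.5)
Fix $i\in\{1,\dots,r-1\}$ and compare the two explicit descriptions of the subspaces produced by Theorem~\ref{main_thereom_02}. For the $k$-regular factorization, with $Z_k$ its unitary,
\[
\mathcal M_{j_i}=\Big\{\Theta_k\cdots\Theta_{j_i+1}f\oplus Z_k^*\big(\Lambda_{j_i+1}f\oplus g_{j_i}\oplus\cdots\oplus g_1\big) : f\in\Gamma\otimes\mathcal E_{j_i+1},\ g_m\in\ov{\im\Delta_m}\Big\}\ominus\mathcal G_\Theta .
\]
Here $\Lambda_{j_i+1}$ is the column used in the proof of Theorem~\ref{unitary_equivalent_in_multivarible}. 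For the aggregated $r$-regular factorization, with unitary $Z_r:=Z_r^{J_r,\dots,J_1}$,
\[
\mathcal M_{J_i}=\Big\{\Theta_{J_r}\cdots\Theta_{J_{i+1}}f\oplus Z_r^*\big(\Delta_{\Theta_{J_r}}\Theta_{J_{r-1}}\cdots\Theta_{J_{i+1}}f\oplus\cdots\oplus\Delta_{\Theta_{J_{i+1}}}f\oplus h_i\oplus\cdots\oplus h_1\big)\Big\}\ominus\mathcal G_\Theta ,
\]
with $h_m\in\ov{\im\Delta_{\Theta_{J_m}}}$. Both are taken inside the same model space $\mathcal H(\Theta)$ and the same $\mathcal G_\Theta$. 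Since $\Theta_{J_r}\cdots\Theta_{J_{i+1}}=\Theta_k\cdots\Theta_{j_i+1}$ and the index $f$ ranges over the same space $\Gamma\otimes\mathcal E_{j_i+1}$, the first components agree. It therefore suffices to show that the generating sets before taking $\ominus\,\mathcal G_\Theta$ coincide. That in turn reduces to an identity relating $Z_k^*$ and $Z_r^*$.

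The key tool is the decomposition \eqref{deco_relation_ZK} from Proposition~\ref{partition_multi}:
\[
Z_k=\Big(\bigoplus_{m=r}^{1} Z^{(m)}\Big)Z_r ,\qquad Z^{(m)}:=Z_{|J_m|}^{\{j_m\},\dots,\{j_{m-1}+1\}} .
\]
By Proposition~\ref{partition_multi}(ii), every $Z^{(m)}$ is unitary because the factorization is $k$-regular. Hence $Z_r^*=Z_k^*\bigoplus_m Z^{(m)}$. Apply this to a generator of $\mathcal M_{J_i}$ and treat the blocks in two groups.

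\emph{Blocks $m\le i$.} The map $Z^{(m)}$ sends $\ov{\im\Delta_{\Theta_{J_m}}}$ onto $\ov{\im\Delta_{j_m}}\oplus\cdots\oplus\ov{\im\Delta_{j_{m-1}+1}}$. So as $h_1,\dots,h_i$ vary freely, the images range exactly over all tuples $(g_{j_i},\dots,g_1)$ with $g_l\in\ov{\im\Delta_l}$.

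\emph{Blocks $m\ge i+1$.} Here one needs the identity
\[
Z^{(m)}\Delta_{\Theta_{J_m}}\Theta_{J_{m-1}}\cdots\Theta_{J_{i+1}}f=\big(\Delta_{j_m}\Theta_{j_m-1}\cdots\Theta_{j_i+1}f\oplus\cdots\oplus\Delta_{j_{m-1}+1}\Theta_{j_{m-1}}\cdots\Theta_{j_i+1}f\big).
\]
This is just the defining formula of $Z^{(m)}$ evaluated at $x=\Theta_{J_{m-1}}\cdots\Theta_{J_{i+1}}f$. Concatenating these blocks over $m=r,\dots,i+1$ gives exactly $\Lambda_{j_i+1}f$.

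Combining the two groups, the generator of $\mathcal M_{J_i}$ becomes $\Theta_k\cdots\Theta_{j_i+1}f\oplus Z_k^*(\Lambda_{j_i+1}f\oplus g_{j_i}\oplus\cdots\oplus g_1)$. Surjectivity of each $Z^{(m)}$ in the block group $m\le i$ shows that every generator of $\mathcal M_{j_i}$ arises this way, so the two sets coincide. Subtracting $\mathcal G_\Theta$ then yields $\mathcal M_{J_i}=\mathcal M_{j_i}$.

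The main obstacle is bookkeeping rather than analysis. One must check that \eqref{deco_relation_ZK} holds with the direct-sum ordering matching the ordering in the model space, namely components listed from $k$ down to $1$. One must also confirm that the generating sets are described by the same vectors, not merely by their closures. I would verify the ordering directly on the dense set $\Delta_\Theta(\Gamma\otimes\mathcal E)$, where both sides of \eqref{deco_relation_ZK} are given by explicit formulas. Surjectivity onto the exact ranges $\ov{\im\Delta_l}$ comes from unitarity of the $Z^{(m)}$, which is why $k$-regularity is essential.
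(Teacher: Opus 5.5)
Your proposal is correct and follows essentially the same route as the paper. Both arguments first write $\mathcal M_{j_i}$ and $\mathcal M_{J_i}$ explicitly via Theorem~\ref{main_thereom_02}, then use \eqref{deco_relation_ZK} to get $(Z_r^{J_r,\dots,J_1})^*=Z_k^*\bigl(\bigoplus_m Z_{|J_m|}^{\{j_m\},\dots,\{j_{m-1}+1\}}\bigr)$. They then match the blocks $m\ge i+1$ through the defining formula of the sub-isometries, and the blocks $m\le i$ through their unitarity onto $\ov{\im\Delta_{j_m}}\oplus\cdots\oplus\ov{\im\Delta_{j_{m-1}+1}}$; your explicit appeal to Proposition~\ref{partition_multi}(ii) for that unitarity spells out what the paper only asserts in passing.
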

	\begin{proof}
		Fix an index $i \in \{1, \dots, r-1\}$; by the definition of the partition, we clearly have $j_i < k$. By Theorem \ref{main_thereom_02}, the joint invariant subspace $\mathcal{M}_{j_i}$ corresponding to the $k$-regular factorization $\Theta = \Theta_k \cdots \Theta_1$ admits the explicit representation
		\begin{align}\label{eq:M_ji_pdf}
			\mathcal{M}_{j_i} &= \Big\{ \Theta_k \cdots \Theta_{j_i+1} f \oplus Z_k^* \big( \Delta_k \Theta_{k-1} \cdots \Theta_{j_i+1} f \oplus \cdots \oplus \Delta_{j_i+1} f \oplus g_{j_i} \oplus \cdots \oplus g_1 \big) : \nonumber \\
			&\qquad f \in \gt\mathcal{E}_{j_i+1}, \; g_l \in \ov{\im(\Delta_l)}, \text{ for } l = 1, \dots, j_i \Big\} \ominus \mathcal{G}_\Theta,
		\end{align}
		where $\mathcal{G}_\Theta = \{ \Theta f \oplus \Delta_\Theta f : f \in \gt\mathcal{E} \}$.
		
		Applying the same theorem to the aggregated $r$-regular factorization $\Theta = \Theta_{J_r} \cdots \Theta_{J_1}$, the corresponding joint invariant subspace $\mathcal{M}_{J_i}$ is given by
		\begin{align}\label{eq:M_Ji_pdf}
			\mathcal{M}_{J_i} &= \Big\{ \Theta_{J_r} \cdots \Theta_{J_{i+1}} f \oplus (Z_r^{J_r, \dots, J_1})^* \big( \Delta_{J_r} \Theta_{J_{r-1}} \cdots \Theta_{J_{i+1}} f \oplus \cdots \oplus \Delta_{J_{i+1}} f \oplus w_i \oplus \cdots \oplus w_1 \big) : \nonumber \\
			&\qquad f \in \gt\mathcal{E}_{j_i+1}, \; w_m \in \ov{\im(\Delta_{J_m})}, \text{ for } m = 1, \dots, i \Big\} \ominus \mathcal{G}_\Theta,
		\end{align}
		where $\Delta_{J_m} = (I - \Theta_{J_m}^* \Theta_{J_m})^{1/2}$.
		
		Recall the identity established in \eqref{deco_relation_ZK} in the proof of Proposition \ref{partition_multi}:
		\[
		Z_k = \left( \bigoplus_{m=r}^1 Z_{|J_m|}^{\{j_m\}, \dots, \{j_{m-1}+1\}} \right) Z_r^{J_r, \dots, J_1}.
		\]
		Since all operators in this decomposition are unitary maps, it follows that
		\[
		(Z_r^{J_r, \dots, J_1})^* = Z_k^* \left( \bigoplus_{m=r}^1 Z_{|J_m|}^{\{j_m\}, \dots, \{j_{m-1}+1\}} \right).
		\]
		
		Next, consider the action of $(Z_r^{J_r, \dots, J_1})^*$ on the components of the defect space. Observe that
		\begin{align*}
			&\big( Z_{|J_m|}^{\{j_m\}, \dots, \{j_{m-1}+1\}} \big) \big( \Delta_{J_m} \Theta_{J_{m-1}} \cdots \Theta_{J_{i+1}} f \big) \\
			&\quad = \Delta_{j_m} \Theta_{j_m-1} \cdots \Theta_{j_i+1} f \oplus \cdots \oplus \Delta_{j_{m-1}+1} \Theta_{j_{m-1}} \cdots \Theta_{j_i+1} f.
		\end{align*}
		For $m = i, \dots, 1$, and an arbitrary vector $w_m \in \ov{\im(\Delta_{J_m})}$, the following unitary identification holds
		\[
		Z_{|J_m|}^{\{j_m\}, \dots, \{j_{m-1}+1\}}( \ov{\im(\Delta_{J_m})} ) = \ov{\im(\Delta_{j_m})} \oplus \cdots \oplus \ov{\im(\Delta_{j_{m-1}+1})}.
		\]
		Using these relations, we obtain
		\begin{align*}
			&(Z_r^{J_r, \dots, J_1})^* \big( \Delta_{J_r} \Theta_{J_{r-1}} \cdots \Theta_{J_{i+1}} f \oplus \cdots \oplus \Delta_{J_{i+1}} f \oplus w_i \oplus \cdots \oplus w_1 \big) \\
			&\quad = Z_k^* \big( \Delta_k \Theta_{k-1} \cdots \Theta_{j_i+1} f \oplus \cdots \oplus \Delta_{j_i+1} f \oplus g_{j_i} \oplus \cdots \oplus g_1 \big),
		\end{align*}
		where $g_l\in \ov{\im(\Delta_l)}$. By utilizing this equivalence, alongside the relation 
		\[
		\Theta_{J_r} \cdots \Theta_{J_{i+1}} = \Theta_k \cdots \Theta_{j_i+1},
		\]
		we conclude that $\mathcal{M}_{J_i} = \mathcal{M}_{j_i}$, which completes the proof.
	\end{proof}

	\noindent\textbf{Acknowledgment.}
The research of the first author is supported in part by the Indian Institute of Technology Goa (SEED Grant 2022/SG/KH/047) and the Anusandhan National Research Foundation (MATRICS Grant MTR/2022/000339). The second author is supported by a CSIR-SRF fellowship (File No. 09/1290(12920)/2021-EMR-I) from the Council of Scientific and Industrial Research (CSIR), India.
	% Bibliography - MODIFY THIS SECTION


\begin{thebibliography}{99}
	
	\bibitem{Bh05a}
	T. Bhattacharyya, J. Eschmeier and J. Sarkar, 
	\textit{Characteristic function of a pure commuting contractive tuple}, 
	Integral Equations Operator Theory, \textbf{53} (2005), 23--32.
	
	\bibitem{Bh06a}
	T. Bhattacharyya, J. Eschmeier and J. Sarkar, 
	\textit{On CNC commuting contractive tuples}, 
	Proc. Indian Acad. Sci. Math. Sci., \textbf{116} (2006), 299--316.
	
	\bibitem{Fr82a}
	A. E. Frazho, 
	\textit{Models for noncommuting operators}, 
	J. Funct. Anal., \textbf{48} (1982), 1--11.
	
	\bibitem{HM2026I}
	K. J. Haria and A. K. Maurya,
	\textit{$k$-regular factorizations and invariant subspaces of completely non-unitary contractions}, \url{https://doi.org/10.13140/RG.2.2.20073.45924}, preprint.
	
	\bibitem{Kerchy03}
	L. K\'erchy,
	\textit{On the factorization of operator-valued functions},
	Acta Scientiarum Mathematicarum (Szeged), \textbf{69}(1-2) (2003), 337--348.
	
	\bibitem{Po89a}
	G. Popescu, 
	\textit{Isometric dilations for infinite sequences of noncommuting operators}, 
	Transactions of the American Mathematical Society, \textbf{316} (1989), 523--536.
	
	\bibitem{Po89b}
	G. Popescu, 
	\textit{Characteristic functions for infinite sequences of noncommuting operators}, 
	J. Operator Theory, \textbf{22} (1989), 51--71.
	
	\bibitem{Po95a}
	G. Popescu, 
	\textit{Multi-analytic operators on Fock spaces}, 
	Math. Ann., \textbf{303} (1995), 31--46.
	
	\bibitem{Po06}
	G. Popescu, 
	\textit{Characteristic functions and joint invariant subspaces}, 
	J. Funct. Anal., \textbf{237} (2006), 277--320.
	
	\bibitem{Sz64a}
	B. Sz.-Nagy and C. Foia\c{s}, 
	\textit{Une caractérisation de sous-espaces invariants pour une contraction de l’espace de Hilbert}, 
	C. R. Math. Acad. Sci. Paris, \textbf{258} (1964), 3426--3429.
	
	\bibitem{Sz64b}
	B. Sz.-Nagy and C. Foia\c{s}, 
	\textit{Sur les contractions de l’espace de Hilbert. IX. Factorisations de la fonction caractéristique. Sous-espaces invariants}, 
	Acta Sci. Math. (Szeged), \textbf{25} (1964), 283--316.
	
	\bibitem{NF70}
	B. Sz.-Nagy and C. Foia\c{s},
	\textit{Harmonic Analysis of Operators on Hilbert Space},
	North-Holland--Akad\'emiai Kiad\'o, Amsterdam--Budapest, 1970.
	
	\bibitem{NF74}
	B. Sz.-Nagy and C. Foia\c{s},
	\textit{Regular factorizations of contractions},
	Proceedings of the American Mathematical Society, \textbf{43}(1) (1974), 91--93.
	
	\bibitem{NFBK10}
	B. Sz.-Nagy, C. Foia\c{s}, H. Bercovici, and L. K{\'e}rchy, 
	\textit{Harmonic analysis of operators on {H}ilbert space}, 
	2nd ed., Universitext, Springer, New York, 2010.
	

	
\end{thebibliography}
\end{document}